\newcommand{\Proj}{{\mathsf {Proj}}}
\newcommand{\coev}{\stackrel{\longrightarrow}{\operatorname{coev}}}
\newcommand{\ev}{\stackrel{\longrightarrow}{\operatorname{ev}}}
\newcommand{\tev}{\stackrel{\longleftarrow}{\operatorname{ev}}}
\newcommand{\tcoev}{\stackrel{\longleftarrow}{\operatorname{coev}}}
\newcommand{\brk}[1]{{\left\langle{#1}\right\rangle}}
\newcommand{\ve}{\varepsilon}
\newcommand{\ro}{r}
\newcommand{\PP}{{\mathsf P}}
\newcommand{\Hr}{H_r}
\newcommand{\qr}{{q}}
\newcommand{\coh}{\omega}
\newcommand{\e}{{\operatorname{e}}}
\newcommand{\slt}{{\mathfrak{sl}(2)}}
\newcommand{\Uq}{{U_q\slt}}
\newcommand{\UqMed}{{\wb U_q\slt}}
\newcommand{\UqSm}{{\widetilde{U}_q\slt}}
\newcommand{\UsltH}{{U_q^{H}\slt}}
\newcommand{\Ubar}{{\wb U_q^{H}\slt}}
\newcommand{\unit}{\ensuremath{\mathbb{I}}}
\newcommand{\cat}{\mathscr{C}}
\newcommand{\Id}{\operatorname{Id}}
\newcommand{\bp}[1]{{\left(#1\right)}}
\newcommand{\qn}[1]{{\left\{#1\right\}}}
\newcommand{\qN}[1]{{\left[#1\right]}}
\newcommand{\qd}{{\mathsf d}}
\newcommand{\qdim}{\operatorname{qdim}}
\newcommand{\End}{\operatorname{End}}
\newcommand{\Hom}{\operatorname{Hom}}
\newcommand{\Span}{\operatorname{Span}}
\newcommand{\tr}{\operatorname{tr}}
\newcommand{\ptr}{\operatorname{ptr}}
\newcommand{\C}{\ensuremath{\mathbb{C}} }
\newcommand{\Z}{\ensuremath{\mathbb{Z}} }
\newcommand{\N}{\ensuremath{\mathbb{N}} }
\newcommand{\wb}{\overline}
\newcommand{\ms}[1]{\mbox{\tiny$#1$}}
\newcommand{\Cp}{{\ddot\C}}
\newcommand{\et}{{\quad\text{and}\quad}}
\renewcommand{\vec}{\mathsf w}
\newcommand{\vecl}{\vec^L}
\newcommand{\vecr}{\vec^R}
\newcommand{\vech}{\vec^H}
\newcommand{\vecs}{\vec^S}
\newcommand{\Cas}{C}
\newcommand{\cas}{c}
\newcommand{\ideal}{\mathcal{I}}
\newcommand{\md}{\operatorname{\mathsf{d}}}
\newcommand{\mt}{\operatorname{\mathsf{t}}}
\newcommand{\qt}{\mathsf t} 
\newcommand{\ob}{\operatorname{Ob}(\cat)}
\newcommand{\Tc}{\mathcal T} 
\newcommand{\epsh}[2]
         {\begin{array}{c} \hspace{-1.3mm}
        \raisebox{-4pt}{\epsfig{figure=#1,height=#2}}
        \hspace{-1.9mm}\end{array}}
\newtheorem{theo}{Theorem}[section]
\newtheorem{lemma}[theo]{Lemma}
\newtheorem{prop}[theo]{Proposition}
\newtheorem{cor}[theo]{Corollary}
\theoremstyle{definition}
\newtheorem{defi}[theo]{Definition}
\newtheorem{rem}[theo]{Remark}
\newtheorem{exo}[theo]{Exercise}
\theoremstyle{remark}
\newcounter{exo} \newcounter{numexercice}
\renewcommand{\theexo}{\arabic{exo}}
\begin{document}
\title[Some remarks on the unrolled quantum group of $\slt$]{Some remarks on the unrolled quantum group of $\slt$}

\author[F. Costantino]{Francesco Costantino}
\address{Institut de Math\'ematiques Toulouse\\
  118 route de Narbonne\\
 31062 Toulouse Cedex 9, France}
\email{Francesco.Costantino@math.univ-toulouse.fr}

\author[N. Geer]{Nathan Geer}
\address{Mathematics \& Statistics\\
  Utah State University \\
  Logan, Utah 84322, USA}
  \email{nathan.geer@gmail.com}

\author[B. Patureau-Mirand]{Bertrand Patureau-Mirand}
\address{UMR 6205, LMBA, universit\'e de Bretagne-Sud, universit\'e
  europ\'eenne de Bretagne, BP 573, 56017 Vannes, France }
\email{bertrand.patureau@univ-ubs.fr}
\thanks{The first author's research was supported by 
French ANR project ANR-08-JCJC-0114-01. Research of the second author was  partially supported by 
NSF grants  DMS-1007197 and DMS-1308196.   All the authors would like to thank  the Erwin Schr\"odinger Institute for Mathematical Physics in Vienna for support during a stay in the Spring of 2014, where part of this work was done.
 }\

\begin{abstract}
  In this paper we consider the representation theory of a
  non-standard quantization of $\slt$.  This paper contains several
  results which have applications in quantum topology, 
  including the classification
  of projective indecomposable modules 
  and a description of morphisms between  
  them.  In the process of proving these results the paper acts as a
  survey of the known representation theory associated to this
  non-standard quantization of $\slt$.  The results of this paper are used extensively in \cite{BCGP} to study Topological Quantum Field Theory (TQFT) and have connections with Conformal Field Theory (CFT).  
 \end{abstract}

\maketitle
\setcounter{tocdepth}{3}


\section{Introduction}
There are many different flavors of quantum $\slt$ based on a common
algebraic presentations.  In particular, these presentations depend on two  features: (1) if the quantum parameter $q$ is generic or a root of unity and (2) what
part of the center is killed.  The associated representation theory varies widely when these features are changed.  Two examples, when $q$ is a root of unity, are the finite-dimensional Hopf algebra commonly known as the small quantum group and the non-restricted quantum group obtained by specializing the De Concini-Kac form (for definitions of these algebras see \cite{CP}).   
The representation theory of the small quantum group leads to a modular category (in particular a finite, semi-simple, ribbon category) which can be used to construct 3-manifold invariants.  On the other hand, the representation theory of the non-restricted quantum group contains an infinite class of modules called the cyclic modules.  

In this paper we consider an intermediate quotient $\Ubar$, which we call the unrolled quantum group,   
leading to a category $\Ubar$-mod which is ribbon but not semi-simple or finite.  
This category has been used to construct quantum link and 3-manifold invariants in several papers \cite{ADO,GPT,GPT2,GP1,CM,CGP1,BCGP}.  These 3-manifold invariants have powerful new properties, including asymptotic behavior related to the Volume Conjecture and novel quantum representation of mapping class groups (see \cite{CGP1,BCGP}).  The existence of these properties is directly related to the unique representation theory discussed in this paper.

The Hopf algebra $\Ubar$ of this paper has an additional generator $H$
which is not in the usual quantum algebra associated to $\slt$.  The
element $H$ should be thought of as a logarithm of the usual generator
$K$.  The generator $H$ is used   
to define a braiding and a twist on a
category of $\Ubar$-modules.  In this category, $H$ is also
responsible for the apparition of an infinite cyclic group of
one dimensional invertible objects which  play a key role in the topological
applications.

The purpose of this paper is to give a survey of the known results
about $\Ubar$-mod while proving some new useful results which have
topological applications.  In particular, we classify all
indecomposable projective modules and define a modified trace on these
objects (see Section \ref{S:projectives}).  We give a ``graded''
quiver which describes the maps between the indecomposable projective
modules (see Section \ref{S:AlgProj}).  We also study the
decomposition of the tensor product of certain indecomposable modules
(see Section \ref{S:DecOfTensorPro}).  These results are used in
\cite{BCGP} in an essential way to build a TQFT for 3-manifolds
equipped with a cohomology class.  The category $\Ubar$-mod contains
some indecomposable non-projective modules that are not studied in
this paper (see for example their use in \cite{CGP2}).  Instead here,
we focus on semi-simple and projective modules that form together a
sub tensor category (see Proposition \ref{P:S@S}).

The category of $\Ubar$-modules has a grading in the abelian group
$\C/2\Z$ and its non semi-simple part is concentrated in degree
$\wb0,\wb1$ blocks. These two blocks form a category similar to the
category $\UqSm$-mod of representations of the standard small quantum
group $\UqSm$.  The category $\UqSm$-mod, equivalent to that of
modules over the triplet vertex operator algebra $\mathcal W(p)$ (see
\cite{K1991,NT}), has been intensively studied in logarithmic
conformal field theories (CFT) associated to the $(1,p)$ triplet
algebras (see \cite{FGST2006, FGST2006b, BFGT2009, BGT2011, CRW}).  In
particular, some results of Section \ref{S:projectives} are similar to
the analysis of projective modules in \cite{FGST2006}.

The category $\Ubar$-mod has additional modules which do not appear in
the representation theory of the small quantum group (in particular,
the one dimensional invertible objects mentioned above).  Moreover,
conjecturally $\Ubar$-mod is equivalent to the category of
representation of the vertex operator algebra called singlet vertex
algebra $\mathcal W(2, 2p-1)$ (see \cite{AM,tCM}).  Understanding a
deeper connection between the representation theory of this paper and
CFT deserves some attention.  For example, 
 it would be interesting to compare the CFT representations of $SL(2,\Z)$ (see
\cite{FGST2006b}) and more generally mapping class group representations (see
\cite{FSS}) with those obtained from $\Ubar$ in the TQFT of \cite{BCGP}.

 \subsection{Acknowledgements} 
 We would like to thank Simon Wood and Antun Milas for their useful
 comments on the relations with the theory of logarithmic CFTs and the
 organizers of the conference ``Modern trends in topological quantum
 field theory'' at the Erwin Schr\"odinger Institut (Vienna) for their
 kind invitation to the conference.

\section{A quantization of $\slt$ and its associated ribbon
  category}\label{S:QUantSL2H} 
In this section we recall the algebra $\Ubar$ and the category of
modules over this algebra.  Fix a positive integer $r$.  Let $r'=r$ if $r$ is odd and $r'=\frac{r}{2}$ else.  Let $\C$ be
the complex numbers and $\Cp=(\C\setminus \Z)\cup r\Z.$ Let
$q=e^\frac{\pi\sqrt{-1}}{r}$ be a $2r^{th}$-root of unity.  We use the
notation $q^x=e^{\frac{\pi\sqrt{-1} x}{r}}$.  For $n\in \N$, we also set 
 $$\qn{x}=q^x-q^{-x},\quad\qN{x}=\frac{\qn x}{\qn1},\quad\qn{n}!=\qn{n}\qn{n-1}\cdots\qn{1}\et\qN{n}!=\qN{n}\qN{n-1}\cdots\qN{1}$$

\subsection{The Drinfel'd-Jimbo quantum group} 
Let $\Uq$ be the $\C$-algebra given by generators $E, F, K, K^{-1}$ 
and relations:
\begin{align}\label{E:RelDCUqsl}
  KK^{-1}&=K^{-1}K=1, & KEK^{-1}&=q^2E, & KFK^{-1}&=q^{-2}F, &
  [E,F]&=\frac{K-K^{-1}}{q-q^{-1}}.
\end{align}
The algebra $\Uq$ is a Hopf algebra where the coproduct, counit and
antipode are defined by
\begin{align}\label{E:HopfAlgDCUqsl}
  \Delta(E)&= 1\otimes E + E\otimes K, 
  &\varepsilon(E)&= 0, 
  &S(E)&=-EK^{-1}, 
  \\
  \Delta(F)&=K^{-1} \otimes F + F\otimes 1,  
  &\varepsilon(F)&=0,& S(F)&=-KF,
    \\
  \Delta(K)&=K\otimes K
  &\varepsilon(K)&=1,
  & S(K)&=K^{-1}
.\label{E:HopfAlgDCUqsle}
\end{align}
Let $\UqMed$ be the algebra $\Uq$ modulo the relations
$E^\ro=F^\ro=0$.  Also, let $\UqSm$ be the algebra $\UqMed$ modulo the
relations $K^{2r}=1$.  These relations generate Hopf ideals so
$\UqMed$ and $\UqSm$ inherit a Hopf algebra structure.

As we will now explain, the categories of modules over $\Uq, \UqMed $
and $ \UqSm$ have very different properties.  Let $X$-mod be the
tensor category of finite dimensional $X$-modules for $X$ equal to
$\Uq, \UqMed$ or $\UqSm$.  The algebra $\UqSm$ is known as the small
quantum group and has been well studied, see \cite{CP} and the references within. 
The algebra $\Uq$ is known 
as the De Concini-Kac quantum group.  It and the category $\Uq$-mod have
rich structures and have been studied in \cite{DK, DKP, DKP2, DPRR}.
This category is not braided nor semi-simple and has an infinite
number of simple modules called cyclic modules which are not highest
weight modules.  Finally, the category $\UqMed$-mod is not semi-simple nor
braided and has an infinite number of non-isomorphic simple modules.
However, one can easily modify $\UqMed$ and obtain a braided category
of highest weight modules which has been used to construct invariants of 
links (\cite{GPT}), of 3-manifolds (\cite{CGP1}) and TQFTs (\cite{BCGP}).  The aim of this paper is to give an overview of the
algebraic results related to this modified quantization and prove a
few straightforward results.
  
 \subsection{A modified version of $\Uq$}  Let $\UsltH$ be the 
$\C$-algebra given by generators $E, F, K, K^{-1}, H$ and
relations in Equation \eqref{E:RelDCUqsl} plus the relations:
\begin{align*}
  HK&=KH, 
& [H,E]&=2E, & [H,F]&=-2F. 
\end{align*} 
The algebra $\UsltH$ is a Hopf algebra where the coproduct, counit and 
antipode are defined by Equations
\eqref{E:HopfAlgDCUqsl}--\eqref{E:HopfAlgDCUqsle} and by
\begin{align*}
  \Delta(H)&=H\otimes 1 + 1 \otimes H, 
  & \varepsilon(H)&=0, 
  &S(H)&=-H.
\end{align*}
Define $\Ubar$ to be the Hopf algebra $\UsltH$ modulo the relations
$E^\ro=F^\ro=0$.  

Let $V$ be a finite dimensional $\Ubar$-module.  An eigenvalue
$\lambda\in \C$ of the operator $H:V\to V$ is called a \emph{weight}
of $V$ and the associated eigenspace is called a \emph{weight space}.
A vector $v$ in the $\lambda$-eigenspace  
of $H$ is a \emph{weight vector} of \emph{weight} $\lambda$, i.e. $Hv=\lambda
v$.  We call $V$ a \emph{weight module} if $V$ splits as a direct sum
of weight spaces and $\qr^H=K$ as operators on $V$, i.e. $Kv=q^\lambda
v$ for any vector $v$ of weight $\lambda$.  Let $\cat$ be the category
of finite dimensional weight $\Ubar$-modules.

\begin{rem}  
The algebra $\Ubar$ does not have a requirement on $K^r$, allowing modules in $\cat$ to have non-integral weights.  The requirement $E^\ro=F^\ro=0$ forces modules to be highest weight modules.  As we will see the generator $H$ is used to define a braiding on $\cat$.  Here the main point is that one must know the action of $H$ and not just the action of $K$ which acts as a kind of exponential of $H$.  
\end{rem}

Since $\Ubar$ is a Hopf algebra then $\cat$ is tensor category where
the unit $\unit$ is the 1-dimensional trivial module $\C$.  Moreover,
$\cat$ is $\C$-linear: hom-sets are $\C$-modules, the composition and
tensor product of morphisms are $\C$-bilinear, and
$\End_\cat(\unit)=\C\Id_\unit$.  When it is clear we denote the unit
$\unit$ by $\C$.  We say a module $V$ is \emph{simple} if has no 
proper submodules.  If $V$ is simple then Schur's lemma implies that
$\End_\cat(V)=\C\Id_V$.  If $\End_\cat(V)=\C\Id_V$ then for
$f\in \End_\cat(V)$ we denote $\brk{f}$ as the scalar determined by
$f=\brk{f}\Id_V$.

We will now recall that the category $\cat$ is a ribbon category. 
  Let $V$ and $W$ be
objects of $\cat$.  Let $\{v_i\}$ be a basis of $V$ and $\{v_i^*\}$ be
a dual basis of $V^*=\Hom_\C(V,\C)$.  Then
\begin{align*}
  \coev_V :& \C \rightarrow V\otimes V^{*}, \text{ given by } 1 \mapsto \sum
  v_i\otimes v_i^*,  &
  \ev_V: & V^*\otimes V\rightarrow \C, \text{ given by }
  f\otimes w \mapsto f(w)
\end{align*}
are duality morphisms of $\cat$.  
In \cite{Oh} Ohtsuki truncates the usual formula of the $h$-adic
quantum $\slt$ $R$-matrix to define an operator on $V\otimes W$ by
\begin{equation}
  \label{eq:R}
  R=\qr^{H\otimes H/2} \sum_{n=0}^{\ro-1} \frac{\{1\}^{2n}}{\{n\}!}\qr^{n(n-1)/2}
  E^n\otimes F^n.
\end{equation}
where $q^{H\otimes H/2}$ is the operator given by  
$$q^{H\otimes H/2}(v\otimes v') =q^{\lambda \lambda'/2}v\otimes v'$$
for weight vectors $v$ and $v'$ of weights of $\lambda$ and
$\lambda'$. The $R$-matrix is not an element in $\Ubar\otimes \Ubar$,
however the action of $R$ on the tensor product of two objects of 
$\cat$ is a well defined linear map on such a tensor
product.  Moreover, $R$ gives rise to a braiding $c_{V,W}:V\otimes W
\rightarrow W \otimes V$ on $\cat$ defined by $v\otimes w \mapsto
\tau(R(v\otimes w))$ where $\tau$ is the permutation $x\otimes
y\mapsto y\otimes x$.
Also, let $\theta$ be the operator given by
\begin{equation}
\theta=K^{\ro-1}\sum_{n=0}^{\ro-1}
\frac{\{1\}^{2n}}{\{n\}!}\qr^{n(n-1)/2} S(F^n)\qr^{-H^2/2}E^n
\end{equation}
where $q^{-H/2}$ is an operator defined by on a weight vector $v_\lambda$ by
$q^{-H^2/2}.v_\lambda = q^{-\lambda^2/2}v_\lambda.$  
Ohtsuki shows that the family of maps $\theta_V:V\rightarrow V$ in
$\cat$ defined by $v\mapsto \theta^{-1}v$ is a twist (see
\cite{jM,Oh}).

 Now the ribbon structure on $\cat$ yields right duality morphisms 
\begin{equation}\label{E:d'b'}
  \tev_{V}=\ev_{V}c_{V,V^*}(\theta_V\otimes\Id_{V^*})\text{ and }\tcoev_V =(\Id_{V^*}\otimes\theta_V)c_{V,V^*}\coev_V
\end{equation}
which are compatible with the left duality morphisms $\{\coev_V\}_V$ and
$\{\ev_V\}_V$.  These duality morphisms are given by \begin{align*}
  \tcoev{V} :& \C \rightarrow V^*\otimes V, \text{ where } 1 \mapsto
  \sum K^{r-1}v_i \otimes v_i^*, \\ \tev_{V}: & V\otimes V^*\rightarrow
  \C, \text{ where } v\otimes f \mapsto f(K^{1-r}v).
\end{align*}
The \emph{quantum dimension} $\qdim(V)$ of an object $V$ in $\cat$ is the $\qdim(V)= \brk{\tev_V\circ \coev_V}=\sum  v_i^*(K^{1-r}v_i)$.  

 For $g\in\C/2\Z$, define
$\cat_{g}$ as the full sub-category of weight modules whose weights 
are all in the class  $g$ (mod $2\Z$).  
Then $\cat=\{\cat_g\}_{g\in \C/2\Z}$ is a $\C/2\Z$-grading (where
$\C/2\Z$ is an additive group): Let $V\in\cat_g$ and $V'\in\cat_{g'}$.
Then the weights of $V\otimes V'$ are congruent to $g+g' \mod 2\Z$,
and so the tensor product is in $\cat_{g+g'}$.  Also if $g\neq g'$ 
 then $\Hom_\cat(V, V')=0$ since morphisms in $\cat$  preserve weights.
Finally, for $f\in V^*=\Hom_\C(V,\C)$ then by definition the action of
$H$ on $f$ is given by $(Hf)(v)=f(S(H)v)=-f(Hv)$
and so
$V^{*}\in\cat_{-g}$.

\section{Modified traces on the projective modules.}
Let $\Proj$ be the full subcategory of $\cat$ consisting of projective
$\Ubar$-modules.  The subcategory $\Proj$ is an ideal (see also
\cite{GKP1}): it is closed under retracts (i.e.\ if $W \in \Proj$ and
$\alpha: X \to W$ and $\beta: W \to X$ satisfy $\beta \circ \alpha =
\Id_{X}$, then $X \in \Proj$) and if $X$ is in $\cat$ and $Y$ is in $\Proj$
then $X \otimes Y$ is in $\Proj$.

For any objects $V,W$ of $\cat$ and any endomorphism $f$ of $V\otimes
W$, set
\begin{equation}\label{E:trL}
\ptr_{L}(f)=(\ev_{V}\otimes \Id_{W})\circ(\Id_{V^{*}}\otimes
f)\circ(\tcoev_{V}\otimes \Id_{W}) \in \End_{\cat}(W),
\end{equation} and
\begin{equation}\label{E:trR}
\ptr_{R}(f)=(\Id_{V}\otimes \tev_{W}) \circ (f \otimes \Id_{W^{*}})
\circ(\Id_{V}\otimes \coev_{W}) \in \End_{\cat}(V).
\end{equation}

\begin{defi}\label{D:trace}  A \emph{trace on $\Proj$} is a family of linear functions
$$\{\mt_V:\End_\cat(V)\rightarrow K\}$$
where $V$ runs over all objects of $\Proj$ and such that the following two
conditions hold.
\begin{enumerate}
\item  If $U\in \Proj$ and $W\in \ob$ then for any $f\in \End_\cat(U\otimes W)$ we have
\begin{equation}\label{E:VW}
\mt_{U\otimes W}\left(f \right)=\mt_U \left( \ptr_R(f)\right).
\end{equation}
\item  If $U,V\in \Proj$ then for any morphisms $f:V\rightarrow U $ and $g:U\rightarrow V$  in $\cat$ we have 
\begin{equation}\label{E:fggf}
\mt_V(g\circ f)=\mt_U(f \circ g).
\end{equation} 
\end{enumerate}
\end{defi}

\section{The center of $\Ubar$}
The center of the small quantum group is known (see 
\cite{FGST2006b}) and its dimension is $3r-1$.  
The following proposition is a description of a subalgebra of the
center of $\Ubar$.
Let $C$ be \emph{quantum Casimir element} defined by
\begin{equation}\label{eq:casimir} 
  \Cas=FE+\dfrac{Kq+K^{-1}q^{-1}}{\qn{1}^2}=
  EF+\dfrac{Kq^{-1}+K^{-1}q}{\qn{1}^2}.
\end{equation} 
Also, let $\Tc_\ro$ be the $r$\textsuperscript{th} Chebyshev polynomial
determined by $\Tc_\ro(\frac{X+X^{-1}}2)= \frac{X^{\ro}+X^{-\ro}}2$.
\begin{prop}\label{P:PolMinC}
  The center of $\Ubar$ contains the $\C$-algebra generated by $C$ and $K^{\pm
    r}$ with the relation
  $\Tc_\ro\left(\frac{\qn1^{2}}2\Cas\right)=-\frac{K^r+K^{-r}}2$.
\end{prop}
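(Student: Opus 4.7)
The plan is to split the proof into two parts: (1) verify that $C$ and $K^{\pm r}$ are central in $\Ubar$, and (2) establish the Chebyshev identity $\Tc_r\bp{\tfrac{\qn{1}^2}{2}C}=-\tfrac{K^r+K^{-r}}{2}$ in $\Ubar$. For (1), since $q^{2r}=1$, the relations $KEK^{-1}=q^2E$, $KFK^{-1}=q^{-2}F$ give $[K^r,E]=[K^r,F]=0$, and $[K,H]=0$ yields $[K^r,H]=0$, so $K^{\pm r}$ is central. For $C$, the standard Casimir computation in $\Uq$ gives $[C,E]=[C,F]=[C,K^{\pm 1}]=0$, and $[C,H]=0$ follows from $[H,FE]=[H,F]E+F[H,E]=-2FE+2FE=0$ together with $[H,K^{\pm 1}]=0$.

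For (2), I first verify the identity on an arbitrary highest weight vector. Let $v$ satisfy $Hv=\lambda v$ and $Ev=0$. Using $C=FE+\tfrac{Kq+K^{-1}q^{-1}}{\qn{1}^2}$, I get $\tfrac{\qn{1}^2}{2}Cv=\tfrac{q^{\lambda+1}+q^{-\lambda-1}}{2}v$. Applying $\Tc_r\bp{\tfrac{X+X^{-1}}{2}}=\tfrac{X^r+X^{-r}}{2}$ with $X=q^{\lambda+1}$, and using $q^r=e^{i\pi}=-1$, yields
\begin{equation*}
\Tc_r\bp{\tfrac{\qn{1}^2}{2}C}v=\tfrac{q^{r(\lambda+1)}+q^{-r(\lambda+1)}}{2}v=-\tfrac{q^{r\lambda}+q^{-r\lambda}}{2}v=-\tfrac{K^r+K^{-r}}{2}v,
\end{equation*}
which matches the right-hand side.

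To upgrade this to an identity in $\Ubar$, set $D:=\Tc_r\bp{\tfrac{\qn{1}^2}{2}C}+\tfrac{K^r+K^{-r}}{2}$, which is central by (1). For generic $\lambda\in\C$ the $r$-dimensional highest weight module $V_\lambda\in\cat$ is simple, so by Schur's lemma $D$ acts on $V_\lambda$ as the scalar it assigns to the highest weight vector, namely $0$; hence $D$ annihilates every $V_\lambda$ for generic $\lambda$. Using the PBW basis $\{F^iK^aH^bE^j:0\le i,j<r,\,a\in\Z,\,b\in\N\}$ of $\Ubar$, the matrix representing any element of $\Ubar$ on $V_\lambda$ has entries depending polynomially on $\lambda$ and $q^{\pm\lambda}$; since the functions $\lambda^b q^{a\lambda}$ are linearly independent on the uncountable generic set of $\lambda$'s, no nonzero element of $\Ubar$ can annihilate every $V_\lambda$. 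Thus $D=0$ in $\Ubar$.

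The main technical obstacle is the final faithfulness step: in a non-semisimple algebra like $\Ubar$, a central element is not a priori detected by its action on simples. The argument here rests on the faithfulness of the combined action of $\Ubar$ on the family $\{V_\lambda\}_\lambda$, which requires some PBW bookkeeping to verify rigorously. A purely algebraic alternative would be to expand $D$ directly in the PBW basis via the Chebyshev recursion $\Tc_{n+1}(x)=2x\Tc_n(x)-\Tc_{n-1}(x)$ and verify vanishing term by term, bypassing representation theory at the cost of a tedious computation.
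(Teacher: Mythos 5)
Your proof is correct in outline but takes a genuinely different route from the paper's, and the route you chose has a real (though fixable, and partly acknowledged) soft spot.

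The paper's argument is entirely internal to the algebra and avoids representation theory. It first proves, by induction on $k$ in $\Uq$, the factorization identity
\[
\prod_{i=0}^{k-1}\Bigl(\Cas-\tfrac{q^{-2i-1}K+q^{2i+1}K^{-1}}{\qn1^2}\Bigr)=E^kF^k,
\]
whose $k=r$ case vanishes in $\Ubar$ because $E^r=0$. It then factors the difference $\Tc_r\bigl(\tfrac{X+X^{-1}}2\bigr)-\Tc_r\bigl(\tfrac{Y+Y^{-1}}2\bigr)=\prod_{i=0}^{r-1}(X+X^{-1}-q^{2i}Y-q^{-2i}Y^{-1})$ and substitutes $\tfrac{\qn1^2}2\Cas$ and $\tfrac{qK+q^{-1}K^{-1}}2$ to match the vanishing product above, yielding the relation directly. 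So the ``tedious term-by-term expansion'' you flag as the algebraic alternative is replaced in the paper by this short factorization trick.

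Your route — verify the identity on highest weight vectors, then deduce $D=0$ from vanishing of its action on the Verma-type modules $V_\alpha$ — is in principle sound, but the faithfulness claim ``no nonzero element of $\Ubar$ can annihilate every $V_\lambda$'' is where the real content lies, and the justification you give is not yet a proof. Matrix entries of a PBW element $F^iK^aH^bE^j$ on $V_\lambda$ are indeed of the form $P(\lambda)q^{c\lambda}$, but a given matrix entry of a general $u\in\Ubar$ is a \emph{sum} over all $(i,j)$ with $i-j$ fixed, so linear independence of the monomials $\lambda^bq^{a\lambda}$ does not immediately force individual PBW coefficients to vanish: a priori there could be cancellations across different $(i,j,a,b)$. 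To make this rigorous you should first use that $D$ is central, hence of weight zero, so $D=\sum_{i=0}^{r-1}F^iP_i(K,H)E^i$; then induct on $i$: applying $D$ to $v_0$ shows $P_0(q^{\alpha+r-1},\alpha+r-1)=0$ for generic $\alpha$ (hence $P_0=0$ by linear independence of $\{\mu^bq^{a\mu}\}$), then applying $D$ to $v_1$ and using $P_0=0$ and that $EFv_0$ is generically a nonzero multiple of $v_0$ shows $P_1=0$, and so on up to $i=r-1$. You gesture at exactly this kind of ``PBW bookkeeping,'' so the gap is acknowledged; but as written the faithfulness step needs this weight-space induction spelled out before it is a proof. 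The paper's purely algebraic argument sidesteps the issue entirely.
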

\begin{proof}
  First, it is easy to see that the elements $C$ and $K^{\pm r}$ are
  central.  Next, we will show the relation stated in the proposition
  holds.  Using induction on $k\in\N$ one can show that
  \begin{equation}
    \label{eq:Pol-Om} 
   \prod_{i=0}^{k-1}\left(\Cas-   \dfrac{q^{-2i-1}K+q^{2i+1}K^{-1}}{\qn1^2}\right)=E^kF^k.
  \end{equation}
  
  On the other hand, we have
  $$2\bp{\Tc_\ro\bp{\frac{X+X^{-1}}2}-\Tc_\ro\bp{\frac{Y+Y^{-1}}2}}
  =\bp{X^r+X^{-r}}-\bp{Y^r+Y^{-r}}$$
$$=X^{-r}(X^r-Y^r)(X^r-Y^{-r})=\prod_{i=0}^{r-1}X^{-1}(X-q^{2i}Y)(X-q^{-2i}Y^{-1})$$
  $$
  =\displaystyle{\prod_{i=0}^{r-1}\bp{X+X^{-1}-Yq^{2i}-Y^{-1}q^{-2i}}}.$$
Combine the last expression with the fact that the product of Equation \eqref{eq:Pol-Om} vanishes for $k=r$ we obtain the following polynomial relation of degree $r$ for
  $\Cas$:
  $$2\Tc_\ro\left(\frac{\qn1^{2}}2\Cas\right)   -2\Tc_r\bp{\frac{qK+q^{-1}K^{-1}}{2}}=   \prod_{i=0}^{r-1}\left({\qn1^2}\Cas-  \bp{q^{2i+1}K+q^{-2i-1}K^{-1}}\right)  =0.$$
   Thus,  $\Tc_\ro\left(\frac{\qn1^{2}}2\Cas\right)=-\frac{K^r+K^{-r}}2$.
\end{proof}
It can be show that the center center of $\Ubar$ contains more complicated
elements involving the element $H$.  We don't need these elements in the
rest of the paper.
\section{Simple  $\Ubar$-modules}
For each $n \in \{0,\ldots,r-1\}$ let $S_n$ be the usual
$(n+1)$-dimensional simple highest weight $\Ubar$-module with
highest weight $n$. The module $S_n$ is a highest weight module with a highest weight vector $s_0$ such that $Es_0=0 $ and $Hs_0=ns_0$.  Then 
 $\{s_0, s_1,\ldots, s_n\}$  is a basis of $S_n$ where $Fs_i=s_{i+1}$, $H.s_i=(n-2i)s_i$, $E.s_0=0=F^{n+1}.s_0$
and $E.s_i=\frac{\qn i\qn{n+1-i}}{\qn1^2}s_{i-1}$.  The quantum
dimension $S_i$ is 
$\qdim(S_n)=(-1)^n\frac{\qn {n+1}}{\qn1}$.

In $\UqSm$-mod the modules $S_n$ are the only simple modules up to
isomorphism.  However in $\cat$, there is a $(n+1)$-dimensional simple
$\Ubar$-module with highest weight $n+r$ which not isomorphic to
$S_i$, as follows.  For $k\in \Z$, let $\C^H_{kr}$ be the one
dimensional modules where both $E$ and $F$ act by zero and $H$ acts by
$kr$.  The degree of $\C^H_{kr}$ is $kr\ {\rm mod} \ 2$.  Then 
$S_n\otimes \C^H_{kr}$ is the simple highest weight
module with highest weight $n+kr$.  As a $\UqSm$-module $\C^H_{kr}$ is 
isomorphic to the trivial module.  The modules $\C^H_{kr}$ are 
important tools in the work of \cite{CGP1, BCGP}.  
We also use another notation to distinguish among these modules,
those that are in the degree $\wb0$ part of $\cat$: we define for any
$k\in\Z$,
\begin{equation}
  \label{eq:sigma}
  \sigma^k=\C^H_{2kr'}\in\cat_{\wb 0}\quad\text{ where }\quad r'=r/2\text{ if }r\in2\Z
  \quad\text{ and }\quad r'=r \text{ else.}
\end{equation}

Next we consider a larger class of finite dimensional highest weight modules:
for each $\alpha\in \C$ we let $V_\alpha$ be the $r$-dimensional
highest weight $\Ubar$-module of highest weight $\alpha + r-1$.  The
modules $V_\alpha$ has a basis $\{v_0,\ldots,v_{r-1}\}$ whose action is
given by
\begin{equation}\label{E:BasisV}
H.v_i=(\alpha + r-1-2i) v_i,\quad E.v_i= \frac{\qn i\qn{i-\alpha}}{\qn1^2}
v_{i-1} ,\quad F.v_i=v_{i+1}.
\end{equation}
For all $\alpha\in \C$, the quantum dimension of $V_\alpha$ is zero: 
$$\qdim(V_\alpha)= \sum_{i=0}^{r-1} v_i^*(K^{1-r}v_i)=
 \sum_{i=0}^{r-1} q^{(r-1)(\alpha + r-1-2i)} =
 q^{(r-1)(\alpha + r-1)}\frac{1-q^{2r}}{1-q^{2}}=0.$$

 We say $V_\alpha$ is \emph{typical} if $\alpha\in  (\C\setminus \Z)\cup r\Z$, otherwise it is   \emph{atypical}.  If $V_\alpha$ is typical then  it is simple, since it is generated by any of the  basis vectors $v_i$ (see Equation \eqref{E:BasisV}).  

\begin{defi}
  The character of a weight module $V\in\cat$ is $\chi(V)=\sum_\alpha
  \dim(V(\alpha))X^\alpha\in\Z[\C]$ where $V(\alpha)$ is the
  $\alpha$-eigenspace of the action of $H$ on $V$ and $X^\alpha$ is a
  notation for the element $\alpha\in\C$ seen in the group ring
  $\Z[\C]$.
\end{defi}
Let $\qN k_{X}=X^{k-1}+X^{k-3}+\cdots +X^{-(k-1)}$.  Then for
$\alpha\in\Cp$ and $i\in\{0,\ldots,r-1\}$, one has
\begin{equation}
  \label{eq:caracSi}
  \chi(V_\alpha)=X^\alpha\qN r_X\et\chi(S_i)=\qN {i+1}_X.
\end{equation}

Let $V,W\in \cat$ and define
\begin{equation}
  \label{eq:Phi}
  \Phi_{V,W}=(\Id_{W}\otimes \tev_{V})\circ(c_{V,W}\otimes \Id_{V^*})\circ(c_{W,V}\otimes \Id_{V^*})\circ(\Id_W\otimes \coev_{V})
   \in \End(W).
\end{equation}

\begin{theo} \label{T:CgenSS} 
(1)  If $\wb\alpha\in \C/2\Z\setminus \Z/2\Z$ then $\cat_{\wb\alpha}$ is
  semi-simple.
  \\
(2) If $\alpha,\beta \in \Cp=(\C\setminus \Z)\cup r\Z $
  and $\alpha+\beta \notin \Z$ then $V_\alpha\otimes
  V_\beta\simeq\oplus_{k\in \Hr} V_{\alpha+\beta+k}$ where
  $\Hr=\{-(r-1),-(r-3),\ldots, r-1\}$.  \\
  (3) All the typical
  modules are projective.
 \end{theo}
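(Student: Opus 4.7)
The plan is to establish the three parts in the order $(2)\Rightarrow(3)\Rightarrow(1)$, since each builds on the previous.

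For $(2)$, the idea is to match characters and then construct enough highest weight vectors. Using \eqref{eq:caracSi} and the elementary identity $\qN r_X^2=\sum_{k\in\Hr}X^k\qN r_X$, one computes
\[\chi(V_\alpha\otimes V_\beta)=X^{\alpha+\beta}\qN r_X^2=\sum_{k\in\Hr}\chi(V_{\alpha+\beta+k}).\]
Since $\alpha+\beta\notin\Z$ and $\Hr\subset\Z$, each exponent $\alpha+\beta+k$ is non-integral, so each $V_{\alpha+\beta+k}$ is typical and hence simple. To upgrade this to a module isomorphism, for each $m\in\{0,\ldots,r-1\}$ I would look for a highest weight vector in the weight-$\alpha+\beta+2(r-1-m)$ subspace of $V_\alpha\otimes V_\beta$. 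That subspace is spanned by $\{v_i\otimes w_{m-i}:0\le i\le m\}$, and the condition $Eu=0$ (computed from $\Delta(E)=1\otimes E+E\otimes K$ together with \eqref{E:BasisV}) becomes an $m\times(m+1)$ linear system whose coefficients are explicit products of $\qn{i}\qn{i-\alpha}$ and $\qn{m-i}\qn{m-i-\beta}$; typicality of $\alpha,\beta$ and $\alpha+\beta$ ensures these brackets are nonzero and the kernel is one-dimensional. Each such highest weight vector generates a copy of the simple module $V_{\alpha+\beta+r-1-2m}$; Schur's lemma forces the $r$ resulting simples to intersect pairwise in $0$, and the character identity forces the injection $\bigoplus_k V_{\alpha+\beta+k}\hookrightarrow V_\alpha\otimes V_\beta$ to be an isomorphism.

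For $(3)$, the plan is to first establish projectivity of $V_0=S_{r-1}$ and then propagate via $(2)$. To see $S_{r-1}$ is projective, I would verify $\operatorname{Ext}^1_\cat(S_{r-1},L)=0$ for every simple $L\in\cat$. Given an extension $0\to L\to E\to S_{r-1}\to 0$, lift the highest weight vector of $S_{r-1}$ to a weight-$(r-1)$ vector $v\in E$. The element $Ev$ lies in $L$ and has weight $r+1$; using the central characters produced by Proposition \ref{P:PolMinC} (the scalar $(-1)^{r-1}q^{r\alpha}$ by which $K^r$ acts on $V_\alpha$, combined with the eigenvalue of $C$) one checks that $S_{r-1}$ sits alone in its block, so without loss of generality $L\cong S_{r-1}$; but $S_{r-1}$ has no weight-$(r+1)$ vector, forcing $Ev=0$. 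Thus $v$ is a highest weight vector, and $\Ubar v$ is a quotient of the length-$r$ Verma module at weight $r-1$; since $F^r=0$ forces this Verma module to coincide with the simple $S_{r-1}$, we obtain $\Ubar v\cong S_{r-1}$ and hence a splitting. With $V_0$ projective, for any typical $V_\gamma$ with $\gamma\notin\Z$, part $(2)$ yields $V_0\otimes V_\gamma=\bigoplus_{k\in\Hr}V_{\gamma+k}$, and because $\Proj$ is a tensor ideal the tensor product is projective; each retract, in particular $V_\gamma$ at $k=0$, is therefore projective. For typical $\gamma\in r\Z$, one has $V_\gamma\simeq V_0\otimes\C^H_\gamma$, which is projective since $V_0$ is and $\C^H_\gamma$ is invertible.

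For $(1)$, with $(3)$ in hand, semisimplicity of $\cat_{\bar\alpha}$ for $\bar\alpha\in\C/2\Z\setminus\Z/2\Z$ follows by induction on $\dim M$: any nonzero $M\in\cat_{\bar\alpha}$ contains a highest weight vector of some weight $\mu\in\bar\alpha$, necessarily with $\mu\notin\Z$, and the submodule it generates is a quotient of the length-$r$ Verma module at weight $\mu$, which equals the typical simple $V_{\mu-(r-1)}$; by $(3)$ this is projective, hence splits off. The main obstacle I expect is the $\operatorname{Ext}^1$-vanishing step in $(3)$: a naive lift of a highest weight vector through an extension need not be annihilated by $E$, and justifying $Ev=0$ requires extracting a sufficiently sharp block decomposition from Proposition \ref{P:PolMinC} to isolate the block of $S_{r-1}$. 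Once that technical point is handled, the remainder of $(3)$ and all of $(1)$ are essentially bookkeeping.
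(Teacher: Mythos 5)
There is a genuine gap in your part (3), and because your parts (1) and (2) are arranged to feed off (3), the gap propagates.

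You flag the obstacle yourself, and it is fatal as stated: the subalgebra of the center described in Proposition \ref{P:PolMinC} does \emph{not} isolate $S_{r-1}=V_0$ in its block. Concretely, for every even $k$ the simple module $V_{kr}\cong S_{r-1}\otimes \C^H_{kr}$ lies in the same graded piece $\cat_{\wb{r-1}}$ as $V_0$, and the Casimir acts on $V_{kr}$ by $c_{kr}=\frac{q^{kr+r}+q^{-kr-r}}{\qn1^2}=\frac{-2}{\qn1^2}=c_0$, while $K^r$ acts on all weight spaces of $V_{kr}$ by $q^{r(kr+r-1-2i)}=(-1)^{r-1}$, independently of $k$. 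So $V_0$, $V_{\pm 2r}$, $V_{\pm 4r}$, \dots all share the same central character with respect to $\langle C,K^{\pm r}\rangle$ and the same $\C/2\Z$-degree. Moreover $V_{2r}$ \emph{does} contain a weight-$(r+1)$ vector (its lowest weight vector $v_{r-1}$), so the step ``$Ev$ lies in $L$ of weight $r+1$, hence $Ev=0$'' fails precisely for $L=V_{2r}$, which is exactly the case your reduction was supposed to exclude. To close this you would need either a finer central element (the paper explicitly notes that the $H$-dependent part of the center is not developed there) or a direct argument for $L=V_{\pm2r}$; a workable fix is to lift a \emph{lowest} weight vector of $V_0$ in the extension $0\to V_{2r}\to E\to V_0\to 0$ --- it has weight $1-r$, is automatically killed by $F$ because $E$ has no weight-$(-1-r)$ vector, and generates a copy of $V_0$ supported entirely in weights $\le r-1$, hence meeting $V_{2r}$ trivially. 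As written, though, the proposal leaves the step unproved and the suggested tool cannot prove it. There is also a smaller issue in your part (1): you split off a typical \emph{submodule} by invoking projectivity, but projectivity of $N$ only splits surjections onto $N$, not injections from $N$; you want injectivity (e.g.\ via $V_\gamma\cong V_{-\gamma}^*$ with $V_{-\gamma}$ projective), or more simply you should split off a typical \emph{quotient} $M\twoheadrightarrow M/M'$ and argue by induction on $M'$.

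For comparison, the paper proves the parts in the order $(1)\Rightarrow(2)\Rightarrow(3)$ and never goes through an $\operatorname{Ext}^1$-vanishing argument for $V_0$. Part (1) is handled directly: on $\cat_{\wb\alpha}$ with $\wb\alpha\notin\Z/2\Z$ the Casimir has $r$ \emph{distinct} eigenvalues (here $\alpha\notin\Z$ is crucial, since $c_{\alpha+2i}-c_{\alpha+2j}=\frac{\qn{i-j}\qn{\alpha+r+i+j}}{\qn1^2}\neq0$), so one restricts to a Casimir eigenspace, takes a maximal semi-simple submodule $V$, and shows a highest-weight vector $w$ of maximal weight outside $V$ satisfies $FEw=0$ and then $Ew=\frac1\nu E^rF^{r-1}w=0$, producing a new simple summand and a contradiction. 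Part (3) for $\alpha\notin\Z$ then follows from (1) (objects in a semi-simple graded piece are projective in $\cat$), and the delicate boundary case $\alpha=rn$ is handled not by blocks but by showing $\Phi_{V_\beta,V_{rn}}\neq0$ (Lemma \ref{lem:generalhopflinks}), which exhibits $V_{rn}$ as a retract of the projective $V_\beta\otimes V_{rn}\otimes V_\beta^*$. That Hopf-link retraction argument is exactly the device that replaces the block analysis you were hoping to extract from Proposition \ref{P:PolMinC}. Your part (2) (explicit highest-weight vectors in $V_\alpha\otimes V_\beta$ via the bidiagonal system) is correct and slightly more self-contained than the paper's terse ``follows from the character formula,'' though once (1) is known one can also read the decomposition off characters and semisimplicity directly.
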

\begin{proof}
   Let $\alpha\in\C\setminus \Z$ and define
  $c_\alpha=\frac{q^{\alpha+r}+q^{-\alpha-r}}{\qn1^2}$.  Proposition
  \ref{P:PolMinC} implies that $C$ satisfies  the
  relation $\prod_{i=0}^{r-1}\left(\Cas-c_{\alpha+2i}\right) =0$ on $\cat_{\wb\alpha}$.  
  Since
  $c_{\alpha+2i}-c_{\alpha+2j}=\frac{\qn{i-j}\qn{\alpha+r+i+j}}{\qn1^2}$,
  this polynomial has only simple roots.  Hence any
  $W\in\cat_{\wb\alpha}$ splits as the direct sum of the eigenspaces
  of $C$.  It is enough to show that $W$ is semi-simple when $C$
  acts by a scalar (say $c_\alpha$) on $W$.  Let $V$ be a maximal
  semi-simple submodule of $W$ and suppose $V\neq W$.  The weights of
  $W$ differ by elements of $2\Z$.  In particular, they are totally
  ordered and there is a weight vector $w$ of $W\setminus V$ of
  maximal weight $\lambda$.  Hence $E.w\in V$ and
  $FE.w=\bp{C-\dfrac{Kq+K^{-1}q^{-1}}{\qn{1}^2}}.w=0$ because it is
  proportional to $w$ and also in $V$.  It follows that
  $\lambda=\alpha+r-1$ modulo 2.  Then by Equation \eqref{eq:Pol-Om},
  $E^{r-1}F^{r-1}.w=\nu w$ where $\nu=\prod_{i=1}^{r-1}(c_{\alpha}-c_{\alpha-2i})\neq 0.$  Thus, $E.w= \frac1\nu
  E^{r}F^{r-1}.w=0$ and $w$ is an highest weight vector.  It follows that  $w$
  generates a module $V'$ isomorphic to the simple module
  $V_{\lambda-r+1}$ where $V\cap V'=\{0\}$.   This contradicts the
  maximality of $V$ and so $V=W$.

  The direct sum decomposition of $V_\alpha\otimes V_\beta$ follows
  from a straightforward calculation using the character formula for a
  typical module.  Finally, we prove the last statement of the theorem
  in two cases: 1) if $V_\alpha$ is a typical module with $\alpha\in
  \C\setminus \Z$ then the previous parts of the theorem imply that
  $V_\alpha$ is projective.  2) If $V_\alpha$ is a typical module with
  $\alpha =rn$ then it can be shown (see Lemma
  \ref{lem:generalhopflinks}) that the morphism
  $\Phi_{V_\beta,V_{rn}}$ defined in \eqref{eq:Phi} is non-zero for
  any $\beta \in \C\setminus \Z$.
  This morphism can be decomposed into the composition $g\circ f$
  where $f:V_{rn}\to V_\beta\otimes V_{rn} \otimes V_\beta^*$ and
  $g:V_\beta\otimes V_{rn} \otimes V_\beta^*\to V_{rn}$ are the
  obvious morphisms.  But $V_\beta\otimes V_{rn} \otimes V_\beta^*$ is
  projective because it is of the form $V_\beta\otimes W$ with
  $V_\beta$ projective.  Furthermore, $\frac1{\brk{g\circ f}} (g\circ
  f)=\Id_{V_{rn}}$.  Since the class of projective modules is closed
  under retracts then $V_{rn}$ is projective.
\end{proof}

\begin{lemma}
  Every simple module of $\cat$ is isomorphic to exactly one of the
  modules in the list:
\begin{itemize}
\item  $S_n\otimes  \C^H_{kr}$, for $n=0,\cdots, r-2$ and $k\in \Z$,  
\item  $V_\alpha$  for  $\alpha\in(\C\setminus \Z)\cup
  r\Z$. 
  \end{itemize}
\end{lemma}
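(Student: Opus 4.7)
The plan is to identify any simple $V\in\cat$ via a highest weight vector, use that $E^r=F^r=0$ to bound the dimension, and match $V$ with a module from the list by its highest weight. Since $V$ is a finite-dimensional weight module and $E$ raises weight by $2$ while being nilpotent, there exists a nonzero vector $v_0\in V$ with $Ev_0=0$ and $Hv_0=\mu v_0$ for some $\mu\in\C$. Simplicity forces $V$ to be generated by $v_0$, and $F^r=0$ then gives $V=\Span\{F^iv_0:0\le i\le r-1\}$.

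From the standard commutator identity $[E,F^i]=\qN{i}F^{i-1}\frac{q^{1-i}K-q^{i-1}K^{-1}}{q-q^{-1}}$ and $Ev_0=0$, one finds
\[
EF^iv_0=\frac{\qn{i}\qn{\mu-i+1}}{\qn{1}^2}F^{i-1}v_0.
\]
For $1\le i\le r-1$, $\qn{i}\ne 0$, so a proper submodule of the highest weight module generated by $v_0$ can arise only when $\qn{\mu-i+1}=0$, i.e.\ $\mu\equiv i-1\pmod r$. Such an index $i\in\{1,\ldots,r-1\}$ exists precisely when $\mu\in\Z$ and $\mu\not\equiv r-1\pmod r$.

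Three cases result. \emph{(i)} If $\mu\notin\Z$, every $F^iv_0$ is nonzero, so $\dim V=r$; using $q^r=-1$, hence $\qn{r-x}=\qn{x}$, the action of $E,F,H$ on the basis $\{F^iv_0\}$ matches Equation \eqref{E:BasisV} for $V_\alpha$ with $\alpha=\mu-r+1\in\C\setminus\Z$, so $V\cong V_\alpha$. \emph{(ii)} If $\mu=(r-1)+kr$ for $k\in\Z$, again no $i\in\{1,\ldots,r-1\}$ satisfies the vanishing condition, so $\dim V=r$ and $V\cong V_{kr}$. \emph{(iii)} Otherwise write $\mu=n+kr$ with $0\le n\le r-2$ and $k\in\Z$; the unique valid index is $i=n+1$, so $F^{n+1}v_0$ generates a proper submodule of the $r$-dimensional highest weight module, and the simple quotient $V$ has dimension $n+1$ and highest weight $n+kr$. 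Since a simple highest weight module is determined by its highest weight, $V\cong S_n\otimes\C^H_{kr}$.

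Uniqueness is verified by highest weights: $S_n\otimes\C^H_{kr}$ has integer highest weight $n+kr$ with residue $n\not\equiv r-1\pmod r$; $V_{kr}$ has integer highest weight $kr+r-1\equiv r-1\pmod r$; and $V_\alpha$ for $\alpha\in\C\setminus\Z$ has non-integer highest weight. These three classes of highest weights are disjoint and each parametrization is injective. The only step that requires care is the bookkeeping via the identity $\qn{r-x}=\qn{x}$ used to match the computed action against the formulas in Equation \eqref{E:BasisV}; the rest is routine highest weight analysis. One could alternatively invoke Theorem \ref{T:CgenSS}(1) for case (i), since $\cat_{\wb\mu}$ is then semi-simple and the proof of that theorem already produces a typical $V_\alpha$ summand.
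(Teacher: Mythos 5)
Your proof is correct and takes the same approach as the paper's, which simply asserts that a simple module in $\cat$ is determined by its highest weight and that the highest weights of the listed modules are in bijection with $\C$. You have usefully filled in the routine highest-weight analysis (the commutator identity, the dimension count via $F^r=0$, the case split on $\mu\bmod r$, and the sign identity $\qn{r-x}=\qn{x}$) that the paper takes for granted.
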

\begin{proof}
  Let $W$ be a simple $\Ubar$-module in $\cat$.  Then $W$ is uniquely
  determined, up to isomorphism, by its highest weight $\lambda \in
  \C$.  The lemma follows from the fact that the highest weight of
  modules in the above list is in bijection with elements of $\C$.  
\end{proof}
Note in the above lemma  the modules $\C^H_{kr}$ and $S_{r-1}\otimes\C^H_{kr}$ are obtained by the isomorphisms $ \C^H_{kr}\cong S_0\otimes  \C^H_{kr}$ and $S_{r-1}\otimes\C^H_{kr}\cong V_{kr}$, respectively.

\begin{theo}\label{T:UniqueTrace}
  There exists a unique trace on $\Proj$ up to multiplication by an
  element of $\C$.  In particular, there is a unique trace $\mt
  =\left\{\mt_{V} \right\}_{V\in \Proj }$ on $\Proj$ such that for any
  $f\in\End_\cat(V_0)$ we have $\mt_{V_0}(f)=(-1)^{r-1}\brk{f}$.
\end{theo}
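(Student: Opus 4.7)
The plan is to apply the general existence-and-uniqueness theory of modified traces on tensor ideals in pivotal categories (as developed in \cite{GKP1}), reducing the problem to verifying that a single simple projective object is \emph{ambidextrous}.

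I would choose the test module to be $V_0$. Since $0\in r\Z\subset\Cp$, Theorem~\ref{T:CgenSS}(3) shows that $V_0$ is simple and projective, so $\End_\cat(V_0)=\C\,\Id_{V_0}$ is one-dimensional. I would then verify that $V_0$ generates $\Proj$ as a tensor ideal in $\cat$: every typical $V_\alpha$ with $\alpha\in\C\setminus\Z$ appears as a direct summand of some $V_0\otimes V_\beta$ by Theorem~\ref{T:CgenSS}(2); each typical $V_{rn}$ is a summand of a tensor product of two generic typicals, hence by transitivity a summand of some $V_0\otimes W$; and the projective covers of atypical simples are obtained as summands of $V_0\otimes S_n$, which is automatically projective because $V_0$ is. Consequently every $P\in\Proj$ admits retraction data $p\colon V_0\otimes W\twoheadrightarrow P$ and $i\colon P\hookrightarrow V_0\otimes W$ with $p\circ i=\Id_P$, and Definition~\ref{D:trace} forces
\[
\mt_P(f)=\mt_{V_0\otimes W}(i\circ f\circ p)=\mt_{V_0}\bigl(\ptr_R(i\circ f\circ p)\bigr)
\]
for every $f\in\End_\cat(P)$. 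Thus $\mt$ is pinned down uniquely by the single scalar $\mt_{V_0}(\Id_{V_0})$, which we normalize to $(-1)^{r-1}$.

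For existence, I would take the displayed formula as the \emph{definition} of $\mt_P(f)$; the content to verify is its independence of the chosen retraction $(i,p)$. By the standard reduction of \cite{GKP1}, this independence is equivalent to \emph{ambidexterity of $V_0$}, i.e.\ a compatibility between the left and right partial trace operations on $\End_\cat(V_0\otimes W)$ as $W$ ranges over $\cat$. The main obstacle is precisely this ambidexterity check, and my plan is to verify it by a direct computation using the explicit basis of $V_0$ given by \eqref{E:BasisV} with $\alpha=0$. The weights $\{r-1,r-3,\ldots,-(r-1)\}$ of $V_0$ are symmetric under negation, which makes the pivotal insertion $K^{\pm(r-1)}$ behave identically under the two sides of duality; tracing $g\in\End_\cat(V_0\otimes W)$ over the $V_0$ factor from either side then yields the same scalar. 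Once ambidexterity is in hand, the formula defines a well-defined trace on $\Proj$, and the chosen normalization recovers $\mt_{V_0}(f)=(-1)^{r-1}\brk{f}$ on $\End_\cat(V_0)$.
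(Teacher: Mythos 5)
Your overall strategy is the right one and matches the paper's in spirit: reduce existence and uniqueness to the ambidexterity of a single simple projective generator via \cite{GKP1}, then pin down the normalization on $V_0$. You also correctly observe that $V_0$ generates $\Proj$ as a tensor ideal (the paper establishes this more explicitly via the decomposition $V_0\otimes S_i=\bigoplus_{k} P_k$ of Lemma 8.2, but your sketch is sound).

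The genuine gap is the ambidexterity verification for $V_0$. What you offer as the ``main obstacle'' check — that the weights $\{r-1,r-3,\dots,-(r-1)\}$ of $V_0$ are symmetric under negation, so the pivot $K^{\pm(r-1)}$ acts ``identically under the two sides of duality'' — is not a proof. Ambidexterity of a simple $J$ in the sense of \cite{GKP1,GPT} is the condition that for every $f\in\End_\cat(J\otimes J)$ one has $\brk{\ptr_L(f)}=\brk{\ptr_R(f)}$, and this is a statement about the compatibility of partial traces with the braiding and twist, not about the spectrum of $H$ on $J$ alone. (Plenty of weight modules have weights symmetric about $0$; that symmetry by itself implies nothing here.) Moreover, the verification for $V_0$ is considerably harder than you suggest: by Proposition \ref{P:S@S} (with $i=j=r-1$), $V_0\otimes V_0\cong\bigoplus_{k\equiv 0\,(2)}P_k$, a direct sum of \emph{non-simple} indecomposable projectives $P_k$ with two-dimensional endomorphism algebras, so there is no multiplicity-free semisimple decomposition to fall back on. This is precisely why the paper does \emph{not} attempt an ambidexterity check on $V_0$: it cites \cite{GPT} for ambidexterity of $V_\alpha$ with $\alpha\in\C\setminus\frac12\Z$, where $V_\alpha\otimes V_\alpha$ is a multiplicity-free sum of simple typicals by Theorem \ref{T:CgenSS}(2), making the check tractable; then it uses that $\Proj=\ideal_{V_\alpha}$ for such $\alpha$ and, separately, $\Proj=\ideal_{V_0}$, to normalize the already-existing trace at $V_0$. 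To repair your argument, you should either follow the paper and verify ambidexterity for a generic $V_\alpha$ (transferring to $V_0$ for normalization), or invoke the ribbon structure of $\cat$ to argue that $\ptr_L$ and $\ptr_R$ are intertwined by the braiding so every simple object is ambidextrous — but in either case the weight-symmetry heuristic must be replaced by an actual computation or citation.
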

\begin{proof}
  The proof follows from results of \cite{GKP1,GPT}.  Here we explain
  this proof without recalling the definitions given in these papers:
  In \cite{GPT} we show that if $\alpha \in \C \setminus \frac12\Z$
  then $V_\alpha$ is an ambidextrous object in $\cat$.  In \cite{GKP1}
  we show that an ambidextrous object $J$ leads to  the existence of a unique (up to a constant) 
  trace $\mt$ on
  the ideal $\ideal_J$ generated by $J$.  When $J$ is simple then the
  trace is uniquely determined by the assignment $\mt_J(f)=c\brk{f}$,
  where $c$ is a constant.  Since $\Proj$ generated by any $V_\alpha$
  with $\alpha \in \C \setminus \frac12\Z$ then there exists a trace
  with the above property.  Finally, since $\Proj$ is generated by 
  $V_0$ the theorem follows. 
\end{proof}
We define the \emph{modified quantum dimension function} as  
$$ \md: \operatorname{Ob}(\Proj ) \to  K \;\; 
\text{ by } \md (V) = \mt_{V}\left(\Id_{V} \right).$$ 
We will prove in Lemma \ref{L:md} that the modified quantum dimension
function is given  by
\begin{equation}\label{eq:qd}
 \qd(V_\alpha)=(-1)^{r-1}\prod_{j=1}^{r-1}
\frac{\qn{j}}{\qn{\alpha+r-j}}=(-1)^{r-1}\frac{r\,\qn{\alpha}}{\qn{r\alpha}}={\color{green}
\frac{(-1)^{r-1}r}{q^{(1-r)\alpha} 
+\cdots+q^{(r-3)\alpha}+q^{(r-1)\alpha}}}
\end{equation}
for $\alpha\in\Cp$.

\section{Projective modules}\label{S:projectives}
Recall that an highest weight vector $v\in V$ is a weight vector such
that $Ev=0$.  We call a weight vector $v$ {\em dominant} if
$(FE)^2v=0$ (in particular, a highest weight vector is dominant).  
It is well known that a highest weight vector $v$ of a module $V$ generates a submodule with basis $\{F^iv\}$.  The following proposition describes the submodule generated by a dominant weight vector.  
\begin{prop}\label{P:DomV}
  Let $v\in V$ be a dominant vector of weight $i\in\{0,1,\ldots,r-2\}$
  and let $j=r-2-i$. Consider the following $2r$ vectors of $V$
  defined by 
  \begin{equation}\label{eq:Pi1}
   \vech_i=v, \qquad     \vecr_{r-j}=E\vech_i,\qquad \vecs_{i}=F\vecr_{r-j},
    \qquad \vecl_{j-r}=F^{i+1}\vech_i,
  \end{equation}
  \vspace*{-3.5ex}
  \begin{align}
    \vech_{i-2k}&=F^k\vech_i&&\et& \vecs_{i-2k}&=F^k\vecs_i&\text{ for }k=\{0\cdots i\},\label{eq:Pi2}\\
    \vecr_{r-j+2k}&=E^k\vecr_{r-j}&&\et& \vecl_{j-2k-r}&=F^k\vecl_{j-r}
    &\text{ for }k=\{0\cdots j\}.   \label{eq:Pi3}
  \end{align} 
  Then the vector space they generate is a submodule of $V$ and the
  following relations holds in $V$ (whenever the involved vectors are
  defined):
  \begin{align}
    H\vec^X_k&=k\vec^X_k,&K\vec^X_k&=q^k \vec^X_k\text{ for }X\in\{L,R,H,S\},\label{eq:Pi4}
    \\
    E\vecr_k&=\vecr_{k+2},&F\vec^X_k&=\vec^X_{k-2}\text{ for }X\in \{H,S,L\}, 
    \label{eq:Pi5}
    \\
    F\vech_{-i}&=\vecl_{j-r},& E\vecl_{j-r}&=\vecs_{-i}, \qquad
    E\vecr_{j+r}=E\vecs_i=F\vecs_{-i}=F\vecl_{-j-r}=0
  \end{align}
  \vspace*{-3.5ex}
  \begin{align}
  E\vech_{i-2k}&=\gamma_{i,k}\vech_{i-2k+2}+\vecs_{i-2k+2},&&&
  E\vecs_{i-2k}&=\gamma_{i,k}\vecs_{i-2k+2}
   \label{eq:Pi7}
  \\\label{eq:Pi8}
  F\vecr_{r-j+2k}&=-\gamma_{j,k}\vecr_{r-j+2k-2}  
  &&\text{and}& E\vecl_{j-2k-r}&=-\gamma_{j,k}\vecl_{j-2k-r+2}
  \end{align}
  where $\gamma_{n,k}=\qN{k}\qN{n-k+1}=\gamma_{n,n-k+1}$.
\end{prop}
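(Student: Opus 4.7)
The plan is to verify each of the formulas \eqref{eq:Pi4}--\eqref{eq:Pi8} directly: once all are established, the $\C$-span of the $2r$ listed vectors is manifestly closed under $E,F,H,K$, and is therefore a submodule. The main tools are the commutator identity
$$EF^k - F^k E = \qN k\, F^{k-1}\,\frac{q^{-(k-1)}K - q^{k-1}K^{-1}}{q-q^{-1}},$$
which on a weight-$\mu$ vector evaluates to $\qN k\qN{\mu-k+1}F^{k-1}$, its analog for $[F,E^k]$, the dominance hypothesis $(FE)^2 v = 0$, and the relations $E^r = F^r = 0$ together with the identities $\qN r = 0$ and $\qN{r-k}=\qN k$ coming from $q^r = -1$.

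The weight relations \eqref{eq:Pi4} follow from $[H,E]=2E$ and $[H,F]=-2F$; the $F$-actions on the $\vech,\vecs,\vecl$ ladders, the $E$-action on the $\vecr$-ladder, as well as $F\vech_{-i}=\vecl_{j-r}$ and $F\vecr_{r-j}=\vecs_i$, are direct from the definitions. For $E\vecl_{j-r}=\vecs_{-i}$, applying the commutator at $k=i+1$ to $v$ produces the vanishing coefficient $\qN{i+1}\qN 0 = 0$, whence $EF^{i+1}v = F^{i+1}Ev = F^i\vecs_i = \vecs_{-i}$. The first formula of \eqref{eq:Pi7} is the same commutator applied to $\vech_{i-2k}=F^k v$, giving $\vecs_{i-2k+2} + \gamma_{i,k}\vech_{i-2k+2}$ once we observe $F^k\vecr_{r-j} = F^{k-1}\vecs_i = \vecs_{i-2k+2}$ for $k\ge 1$. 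The second formula of \eqref{eq:Pi7} follows analogously from $\vecs_{i-2k}=F^k\vecs_i$ once $E\vecs_i = 0$ is known.

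The key step is to establish $E\vecs_i = 0$ together with the symmetric vanishings $F\vecs_{-i}=0$, $E\vecr_{r+j}=0$, $F\vecl_{-r-j}=0$. Expanding $FEFE = F^2E^2+\qN{i+2}FE$ on weight $i$, the dominance $(FE)^2 v=0$ becomes
$$F^2 E^2 v = -\qN{i+2}\vecs_i,$$
while one commutator rewrite gives $E\vecs_i = FE^2 v + \qN{i+2}\vecr_{r-j}$; hence $E\vecs_i = 0$ is equivalent to $FE^2 v = -\qN j\,\vecr_{r-j}$ (using $\qN{i+2}=\qN j$). I plan to apply $E$ and $F$ iteratively to the displayed identity, each step invoking the commutator and the cancellations $\qN r=0$; the truncations $E^r v = F^r v = 0$ close the tower and yield $\qN{i+2}\,E\vecs_i = 0$. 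For $i<r-2$ we have $\qN{i+2}\ne 0$, so $E\vecs_i = 0$; the edge case $i = r-2$ (where $j=0$ and $\qN{i+2}=\qN r=0$) is handled by running the descent one step further to derive $E^2 v = 0$, which forces $E\vecs_{r-2}=FE^2 v + \qN r\vecr_r = 0$. The three symmetric vanishings come from the mirror argument using $F^r = 0$.

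With these vanishings, \eqref{eq:Pi8} follows by induction on $k$ using the $E^k$-commutator applied to $\vecr_{r-j}=Ev$ and to $\vecl_{j-r}=F^{i+1}v$; the coefficients $\gamma_{j,k}=\qN k\qN{j-k+1}$ emerge from collecting the $\qN$-factors with the help of $\qN{r-k}=\qN k$. The main obstacle is the derivation of $E\vecs_i = 0$: the condition $(FE)^2 v=0$ alone does not force the identity in a general weight module, and closing the gap requires the coordinated use of the commutator identity, $E^r = 0$ and $\qN r = 0$ to telescope the tower of relations derived from $F^2 E^2 v = -\qN{i+2}\vecs_i$. A cleaner alternative would be to construct the $2r$-dimensional module abstractly from the stated formulas, verify the $\Ubar$-relations on it, and invoke its universal property for a dominant weight-$i$ generator to obtain a map into $V$ carrying the distinguished basis to the $2r$ vectors of the statement.
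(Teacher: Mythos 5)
Your routine verifications are correct and essentially match the paper: the weight relations \eqref{eq:Pi4} from the commutators with $H,K$; the $F$-actions on the $\vech,\vecs,\vecl$-ladders from the definitions; $E\vecl_{j-r}=\vecs_{-i}$ from $[E,F^{i+1}]$ acting on the weight-$i$ vector $v$ with the vanishing coefficient $[0]=0$; the first formula of \eqref{eq:Pi7} by the same commutator applied to $\vech_{i-2k}$; and the conditional derivation of the second formula of \eqref{eq:Pi7} and of \eqref{eq:Pi8} once the vanishings are known. The reduction $E\vecs_i=0 \Leftrightarrow FE^2v=-\qN j\,\vecr_{r-j}$ and the consequence $F^2E^2v=-\qN{i+2}\vecs_i$ of dominance are also correct.

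However, the crux of the argument --- establishing $E\vecs_i=0$ (together with $F\vecs_{-i}=E\vecr_{r+j}=0$; only $F\vecl_{-j-r}=F^rv=0$ is immediate) --- is left as a plan rather than a proof. The claim that ``applying $E$ and $F$ iteratively \dots the truncations $E^rv=F^rv=0$ close the tower and yield $\qN{i+2}E\vecs_i=0$'' is not substantiated, and it is not clear it can be carried out as stated: iterating the commutator on $F^2E^2v=-\qN{i+2}\vecs_i$ keeps introducing new terms $F^aE^bv$ with higher $b$ without producing a closed telescope, and the descent from $E^{r-1}(E\vecs_i)=0$ back to $E\vecs_i=0$ stalls at $E^{j+1}(E\vecs_i)$, where the coefficient $\qN{i+j+2}=\qN r=0$ blocks the step. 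In the paper this vanishing is obtained by a genuinely different mechanism built around the Casimir: one notes that $(\Cas-\cas_i)^2=0$ on the submodule generated by $v$, uses the factorization $E^kF^k=\prod_{m=0}^{k-1}\bigl(\Cas-\tfrac{q^{-2m-1}K+q^{2m+1}K^{-1}}{\qn1^2}\bigr)$ (Equation \eqref{eq:Pol-Om}) to get $E^{r-1}F^{r-1}v=(-1)^j(\qN i!)^2(\qN j!)^2\vecs_i$, and then $E^r=0$ immediately gives $E\vecs_i=0$; the nilpotence of $\Cas-\cas_i$ is also what makes the inductive $E$-computation on the $\vecl$-ladder go through. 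None of this Casimir machinery appears in your proposal, and the commutator-only route you sketch does not visibly reproduce it. Your suggested ``cleaner alternative'' (build $P_i$ abstractly and invoke a universal property for dominant generators) is also not a way out as stated, because the universal property of $P_i$ among dominant-generated modules is precisely what Proposition \ref{P:DomV} (via \eqref{eq:dominant}) is being used to establish; invoking it here would be circular. So the proposal has the right outline but a real gap at its central step.
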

\begin{figure}[H]
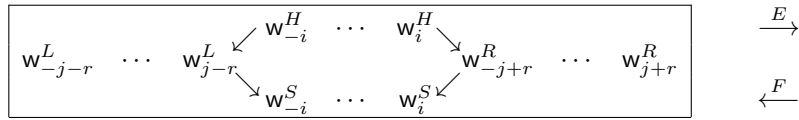

  \centering
 \[
\begin{array}{|ccccccccc|cc}
\cline{1-9}
  &&& \vech_{-i}& \cdots &\vech_i&&&&&\stackrel E{\longrightarrow}\\
\vecl_{-j-r}& \cdots & 
\vecl_{j-r}
&&&& \vecr_{-j+r}& \cdots &\vecr_{j+r}&\ \ \ &\\
  &&& \vecs_{-i}& \cdots &\vecs_i&&&&&\stackrel F{\longleftarrow}\\
\cline{1-9}
\end{array}
\put(-220,8){$\swarrow$}\put(-219,-9){$\searrow$}
\put(-143,8){$\searrow$}\put(-143,-9){$\swarrow$}
\]
 
  \caption{The weight spaces structure of the module $P_i$ (here $j=r-2-i$).}
  \label{fig:P_i}
\end{figure}
\begin{proof}
First, we show that the Relations \eqref{eq:Pi4}--\eqref{eq:Pi8} hold.   
The actions of $H$ and $K$ are easily deduced from their commutation
  relations with $E$ and $F$.  
  The formulas in \eqref{eq:Pi5} are restatements of 
  \eqref{eq:Pi2}--\eqref{eq:Pi3}.  
 
  Let
  $\cas_i=\frac{q^{i+1}+q^{-i-1}}{\qn1^2}=-\frac{q^{j+1}+q^{-j-1}}{\qn1^2}$.    
We have $\Cas\vech_i=\cas_i\vech_i+\vecs_i$.  Since $FE\vecs_i=0$ then
  $\Cas$ acts by the scalar $\cas_i$ on $\vecs_{i}$.  As $\Cas$ is central, we
  have
  $$E\vecl_{j-r}=EF\vech_{-i}=\bp{\Cas-\frac{Kq^{-1}+K^{-1}q}{\qn{1}^2}}F^i\vech_i=
  F^i\bp{\Cas-\cas_i}\vech_i=F^i\vecs_i=\vecs_{-i}.$$ 
  Next, $F\vecl_{-j-r}=F(F^{j}F^{i+1}\vech_i)=F^r\vech_i=0$ so $\Cas$ acts by $c_i$ on
  $\vecl_{-j-r}$. Then by induction on $k=0\cdots j-1$, 
  \begin{align*}
  E\vecl_{2k-j-r}&=EF\vecl_{2k-j-r+2}=
  \bp{\Cas-\frac{Kq^{-1}+K^{-1}q}{\qn{1}^2}}\vecl_{2k-j-r+2}\\
  &=\bp{\cas_i-\frac{q^{2k-j-r+1}+q^{-2k+j+r-1}}{\qn{1}^2}}\vecl_{2k-j-r+2}
  =-\qN{j-k}\qN{k+1}\vecl_{2k-j-r+2}\\
  &=-\gamma_{j,j-k}\vecl_{2k-j-r+2}.
  \end{align*}
  By writing $2k-j$ as $j-2(j-k)$ we obtain the second formula in \eqref{eq:Pi8}.
Moreover, $\Cas$ acts by $c_i$ on 
    $\vecl_{2k-j-r+2}=-\gamma_{j,j-k}^{-1}E\vecl_{2k-j-r}$.
  Now $\Cas$ also acts by $\cas_i$ on $\vecs_{i-2k}=F^k\vecs_i$.  This implies
  that for $k=1\cdots i$,
  \begin{align*}
    E\vecs_{i-2k}&=EF\vecs_{i-2k+2}
    =\bp{\Cas-\frac{Kq^{-1}+K^{-1}q}{\qn{1}^2}}\vecs_{i-2k+2}\\
    &=\bp{\cas_i-\frac{q^{i-2k+1}+q^{-i+2k-1}}{\qn{1}^2}}\vecs_{i-2k+2}
    =\qN{k}\qN{i-k+1}\vecs_{i-2k+2}.
  \end{align*}
Since $E^r=0$ we have $E\vecs_i=E\frac{(-1)^j}{\qN i!^2\qN j!^2}E^{r-1}\vecl_{-r-j}=0$.  
   This implies that $EF\vecr_{r-j}=E\vecs_i=0$, so $\Cas$ acts by the scalar $\cas_i$ on $\vecr_{r-j}$ and on $\vecr_{r-j+2k}=E^k\vecr_{r-j}$.     Using
  this, for $k=1\cdots j$ we have 
  \begin{align*}
    F\vecr_{r-j+2k}&=FE\vecr_{r-j+2k-2}=
    \bp{\Cas-\frac{Kq+K^{-1}q^{-1}}{\qn{1}^2}}\vecr_{r-j+2k-2}\\
    &=\bp{\cas_i-\frac{q^{r-j+2k-1}+q^{-r+j-2k+1}}{\qn{1}^2}}\vecr_{r-j+2k-2}=
    -\qN{k}\qN{j-k+1}\vecr_{r-j+2k-2}.
  \end{align*}
Since   $F^r=0$ we have $F\vecs_{-i}=F\frac{(-1)^j}{(\qN j!)^2}F^{r-1}\vecr_{r+j}=0$. 
 Then $\Cas-\cas_i$ sends $\vech_i\mapsto\vecs_i$ so it sends
  $\vech_{i-2k}=F^k\vech_i\mapsto\vecs_{i-2k}=F^k\vecs_i$.  This implies that for
  $k=1\cdots i$,
     \begin{align*}
    E\vech_{i-2k}&=EF\vech_{i-2k+2}=
    \bp{\Cas-\frac{Kq^{-1}+K^{-1}q}{\qn{1}^2}}\vech_{i-2k+2}\\
    &=\bp{\cas_i-\frac{q^{i-2k+1}+q^{-i+2k-1}}{\qn{1}^2}}\vech_{i-2k+2}+\vecs_{i-2k+2}
    =\qN{k}\qN{i-k+1}\vech_{i-2k+2}+\vecs_{i-2k+2}.
  \end{align*} 
  Now $F\vech_{-i}=FF^i\vech_{i}=\vecl_{j-r}$.  
  
  The final relation to check is $E\vecr_{j+r}=0$.  Relation \eqref{eq:Pi7} implies 
  $$E^i\vech_{-i}=c \vech_{i} +c'\vecs_{i}$$
where $c$ and $c'$ are   
constants and $c\neq0$.  
We have
$$E^r\vech_{-i}=E^{r-i}(c \vech_{i} +c'\vecs_{i})=cE^{r-i-1}\vecr_{r-j}=cE\vecr_{r+j}.$$
Thus, we have shown the Relations \eqref{eq:Pi1}--\eqref{eq:Pi8} hold.

Finally the vector space generated by the vectors of this proposition
is clearly stable by the generators of $\Ubar$ so it is a submodule of
$V$. 
   \end{proof}

Projective indecomposable weight modules in $\cat_{\wb0}\cup\cat_{\wb1}$ have a highest weight vector.  The following proposition classifies the isomorphism classes of these modules.  
\begin{prop}\label{P:proj-mod}
  Let $i\in\{0,1,\ldots,r-2\}$ and let $j=r-2-i$.  Denote the
  vectors of the canonical basis of $\C^{2r}$ by
  $$(\vech_i,\vech_{i-2},\ldots,\vech_{-i},\,\vecr_{r+j},\vecr_{r+j-2},\ldots,\vecr_{r-j},\,
  \vecl_{j-r},\vecl_{j-2-r},\ldots,\vecl_{-j-r},\,\vecs_i,\vecs_{i-2},\ldots,\vecs_{-i}).$$
  Then Formulas \eqref{eq:Pi1}-\eqref{eq:Pi8} define a structure of
  weight module on $\C^{2r}$  
  which we denote by $P_i$.  Here $P_{r-1}=S_{r-1}=V_0$. 
   The module $P_i$ is projective and indecomposable.   Any projective indecomposable weight module
  $P\in\cat_{\wb0}\cup\cat_{\wb1}$  
 with highest weight $(k+1)r-i-2$ is isomorphic to $P_i\otimes \C^H_{kr}$.
 \end{prop}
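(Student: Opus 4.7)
\emph{Step 1 (Well-definedness).} I would first check that formulas \eqref{eq:Pi1}--\eqref{eq:Pi8} endow $\C^{2\ro}$ with a $\Ubar$-module structure. The actions of $H$ and $K$ are diagonal with the prescribed weights, so the commutation relations between $H$, $K$ and $E$, $F$ hold immediately. The nilpotency $E^\ro = F^\ro = 0$ is visible from the length of the $E$- and $F$-chains in the basis (each chain is killed at its boundary by the relations). The only substantive identity is $[E,F]=(K-K^{-1})/\qn{1}$ applied to each basis vector, and these are precisely the computations carried out in the proof of Proposition \ref{P:DomV}, where $\Cas$ acts by the common scalar $\cas_i$ on the $\vecs$-, $\vecr$- and $\vecl$-vectors. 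The identification $P_{\ro-1}=S_{\ro-1}=V_0$ is a matter of convention: when $i=\ro-1$ one has $j=-1$, the $\vecr$ and $\vecl$ rows disappear, and only the simple $\ro$-dimensional module remains.

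\emph{Step 2 (Indecomposability).} I would compute $\End_\cat(P_i)$. The dominant vector $\vech_i$ generates $P_i$: iterating $F$ yields all $\vech$'s and, via $F^{i+1}\vech_i=\vecl_{j-\ro}$, all $\vecl$'s, while $E\vech_i=\vecr_{\ro-j}$ opens the $\vecr$-row, and $F\vecr_{\ro-j}=\vecs_i$ sweeps the socle. Any $\phi\in\End_\cat(P_i)$ is therefore determined by $\phi(\vech_i)$, which by weight invariance lies in the two-dimensional weight-$i$ subspace $\C\vech_i\oplus\C\vecs_i$. Conversely, every assignment $\phi(\vech_i)=a\vech_i+b\vecs_i$ extends to a valid homomorphism by the very computations used in Step~1. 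Hence $\End_\cat(P_i)=\C\cdot\Id\oplus\C\cdot N$ with $N\vech_i=\vecs_i$ and $N^2=0$ (since $N\vecs_i=N(FE\vech_i)=FE\vecs_i=0$); this is a local ring, so $P_i$ is indecomposable.

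\emph{Step 3 (Projectivity --- the main obstacle).} The plan is to realize $P_i$ as a direct summand of a tensor product already known to be projective. By Theorem \ref{T:CgenSS}(3) every typical $V_\alpha$ is projective, and since $\Proj$ is a tensor ideal, $V_\alpha\otimes W$ is projective for any $W\in\cat$. The strategy is to start from the semi-simple decomposition $V_\alpha\otimes V_\beta \cong \bigoplus_{k\in\Hr}V_{\alpha+\beta+k}$ of Theorem \ref{T:CgenSS}(2), valid for $\alpha+\beta\notin\Z$, and to degenerate $(\alpha,\beta)$ to a value with $\alpha+\beta\in\Z$. At such a limit, pairs of previously distinct typical summands $V_\gamma$ coalesce into indecomposable projective modules whose composition factors, head, socle, and endomorphism ring match those of $P_i$ (up to a $\C^H_{kr}$-twist); Krull--Schmidt then forces one such summand to be $P_i$ itself. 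This limiting/specialization argument is the core difficulty: Steps 1, 2, and 4 are essentially formal, whereas here one needs either a careful flatness argument on the universal semi-simple decomposition or a direct calculation with highest weight vectors in an atypical tensor product.

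\emph{Step 4 (Classification).} Tensoring with the invertible one-dimensional module $\C^H_{kr}$ is an auto-equivalence of $\cat$ preserving both projectivity and indecomposability, so $P_i\otimes\C^H_{kr}$ is a projective indecomposable with the stated highest weight. Conversely, any projective indecomposable $P\in\cat_{\wb0}\cup\cat_{\wb1}$ has a unique simple head, and this head must be of the form $S_i\otimes\C^H_{kr}$ for some $i\in\{0,\ldots,\ro-2\}$ and $k\in\Z$ (the typical $V_{kr}$ are themselves simple projectives, handled by Theorem~\ref{T:CgenSS}(3) and so not classified by the present statement). Uniqueness of projective covers then forces $P\cong P_i\otimes\C^H_{kr}$, completing the proof.
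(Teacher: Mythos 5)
Your Steps 1, 2, and 4 are essentially the paper's arguments: well-definedness by direct check, indecomposability by computing $\End_\cat(P_i)$ via the dominant vectors in weight $i$ (the paper does this through \eqref{eq:dominant}), and the classification of indecomposable projectives by lifting a surjection onto the simple head through $\pi_i:P_i\to S_i$ and invoking indecomposability.

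Step 3 is where you have a genuine gap, and it is the crux of the proposition. You propose to realize $P_i$ as a summand of $V_\alpha\otimes V_\beta$ by degenerating the generic decomposition $\bigoplus_{k\in H_r}V_{\alpha+\beta+k}$ to a point where $\alpha+\beta\in\Z$, and then to appeal to Krull--Schmidt. You yourself flag that a flatness or specialization argument would be required, and you do not carry it out. Moreover, even granting that $V_\alpha\otimes V_\beta$ is projective (which follows trivially from $V_\alpha\in\Proj$ being a tensor ideal, no limit needed), identifying one of its indecomposable summands with the explicit module $P_i$ of Proposition \ref{P:DomV} requires knowing that $P_i$ is determined, among modules with its composition factors, by some invariant; that uniqueness is typically established precisely by the projectivity and classification you are in the middle of proving, so there is a real risk of circularity.

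The paper's argument for projectivity is entirely different and much more elementary. Given a surjection $\phi:V\to P_i$, decompose $V$ along the generalized eigenspaces of the central Casimir $C$; only the generalized $c_i$-eigenspace $V_i$ matters. The key input is Proposition \ref{P:PolMinC}: restricted to $\cat_{\overline{0}}$ (or $\cat_{\overline{1}}$) the Chebyshev relation forces the minimal polynomial of $C$ to have each root of multiplicity at most $2$, so $(C-c_i)^2$ annihilates $V_i$. Therefore any weight-$i$ vector of $V_i$ satisfies $(FE)^2v=0$, i.e.\ is dominant in the sense of Proposition \ref{P:DomV}. Picking $v\in\phi^{-1}(\vech_i)$ one then gets, from the universal property of the dominant module structure, a unique $\psi:P_i\to V_i$ with $\psi(\vech_i)=v$ and $\phi\psi=\Id_{P_i}$. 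This produces the section directly and is what you should supply in place of the degeneration sketch.
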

\begin{proof}
   A direct computation shows that the commutation relation
  $EF-FE=\frac{K-K^{-1}}{\qn1}$ is satisfied on $P_i$, the other
  relations are consequences of the fact that $E$ and $F$ translate
  the weight spaces (see Figure \ref{fig:P_i}).  Hence Formulas
  \eqref{eq:Pi1}-\eqref{eq:Pi8} define a structure of weight module on
  $P_i$.

Proposition \ref{P:DomV} implies $P_i$ is a module which is generated by its dominant vector $\vech_i$.  Furthermore, 
 if $V$ is any weight module then
  \begin{equation}\label{eq:dominant}
    \Hom(P_i,V)\simeq\{v\in V:v\text{ is dominant of weight }i\}.
  \end{equation}
  In particular, $\vech_i$ and $\vecs_i$ are both dominant vector of
  $P_i$ of weight $i$, thus $\End(P_i)$ is a two dimensional vector
  space generated by $\Id:\vech_i\mapsto\vech_i$ and the nilpotent map
  $x_i:\vech_i\mapsto\vecs_i$ given by the action of $\Cas-\cas_i$.
  This implies that $\End(P_i)$ is a local algebra and that $P_i$ is
  indecomposable (see \cite[section 5.2]{Pi}).
  
   Let now $\phi:V\to P_i$ be a surjective map in $\cat$. 
   As $\Cas$ is
  central, $V$ splits as the direct sum of the characteristic spaces
  of $\Cas$ and only the summand $V_i$ of $V$ associated to the
  eigenvalue $c_i$ is not included in $\ker\phi$.  We claim that
  $V_i\subset\ker(\Cas-\cas_i)^2_{|V}$.  
  Indeed Proposition \ref{P:PolMinC} implies that
  on a module of $\cat_{\wb0}$, 
    $\Tc_\ro\left(\frac{\qn1^{2}}2\Cas\right)=-1=\Tc_r(\frac{q+q^{-1}}2)$ so
    $\prod_{i=0}^{r-1}\bp{\Cas-\cas_{2i}}=0$ 
    and on a module of $\cat_{\wb1}$, we have
    $\Tc_\ro\left(\frac{\qn1^{2}}2\Cas\right)=1=\Tc_r(\frac{1+1}2)$ so
    $\prod_{i=0}^{r-1}\bp{\Cas-\cas_{2i-1}}=0.$ 
    In both cases, all roots of the minimal polynomial of $\Cas$ have
    multiplicity at most $2$ and this proves that
    $V_i\subset\ker(\Cas-\cas_i)^2_{|V}$.  
Any vector $v\in V_i$ of weight $i$ satisfy
    $(\Cas-\cas_i)^2.v=(FE)^2.v=0$ thus is dominant. 
      Let
    $v\in\phi^{-1}(\{\vech_i\})$.  Then Proposition \ref{P:DomV} implies that
    there is an unique $\psi\in\Hom(P_i,V_i)$ sending $\vech_i\mapsto v$.
    Furthermore, $\phi\circ\psi(\vech_i)=\vech_i$ so $\phi\circ\psi=\Id_{P_i}$.
    Thus $\phi$ has a section and $P_i$ is projective.
    
    For all $0\leq i\leq r-1$, $\Hom(P_i,S_i)\simeq\{v\in S_i:v\text{ is dominant of
      weight }i\}$ is one dimensional generated by 
    the surjective morphism $\pi_i$
    sending $\vech_i$ to a highest weight vector $v_i$ of $S_i$.  Let $P'$ be an indecomposable projective module and $\phi':P'\to
    S$ a surjective map to a simple module (obtained by taking the quotient of 
    $P'$ by a maximal submodule).  Then for some $k\in\Z$ and some $0\leq i\leq r-1$, there exist an isomorphism $S\otimes\C^H_{rk}\simeq S_i$.  If
    $i=r-1$ then $S$ is simple and projective. In this case $\phi'$ has a section and is
    an isomorphism since $P'$ is indecomposable.    Assume now $i<r-1$
    and let $P=P'\otimes \C^H_{rk}$ which is also projective and
    indecomposable.  Let $\phi=\phi'\otimes\Id_{\C^H_{rk}}:P\to S_i$.  
       Since $P$ 
    is projective, there exists $\psi:P\to P_i$ such that
    $\phi=\pi_i\circ \psi$.  Let $v\in P$ such that $\phi(v)=v_i\in
    S_i$.  Then $\psi(v)\in\pi_i^{-1}(v_i)\cap
    P_i(i)=\vech_i+\C\vecs_i$ (here $P_i(i)$ is the weight space of
    weight $i$ of $P_i$). 
    Hence $\psi(v)$ generates $P_i$ and $\psi$
    is surjective.  Finally, as $P_i$ is projective, $\psi$ has a
    section and is an isomorphism since $P$ is indecomposable.  Thus
    we have $P'=P\otimes\C^H_{-rk}\simeq P_i\otimes\C^H_{-rk}$.
\end{proof}

The module $P_i$ is also an injective module in $\cat$ which is self
dual.  Let $i\in\{0,\dots, r-2\}$. After identifying both $S_i$ and $P_i$
with their duals, the map $\pi_i^*:S_i\to P_i$ is an injective morphism
with image $\Span_\C(\vecs_i,\ldots,\vecs_{-i})$.  The quotient
$(\ker\pi_i)/\pi_i^*(S_i)$ is isomorphic to
$(\C^H_r\oplus\C^H_{-r})\otimes S_j$ where $j=r-i-2$.  As in the proof of Proposition \ref{P:proj-mod}, we let $x_i$ be the nilpotent endomorphism of $P_i$,  that sends
$\vech_i\mapsto\vecs_i$.  We have
$$\End(P_i)=\C \Id\oplus \C x_i=\C[x_i]/(x_i^2).$$  
Finally, the character of $P_i$ is given by
$$\chi(P_i)=2[i+1]_X+(X^r+X^{-r})[r-i-1]_X=[r]_X(X^{r-i-1}+X^{-r+i+1}).$$

\begin{cor} For all $\alpha\in\Cp$, $V_\alpha\otimes
  V_{-\alpha}$ is isomorphic to $ V_0\otimes V_0$.
\end{cor}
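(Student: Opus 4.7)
The plan is a character-theoretic argument: show that $V_\alpha\otimes V_{-\alpha}$ and $V_0\otimes V_0$ are both projective objects of $\cat_{\wb 0}$, check that they have equal characters, and then invoke the fact that a projective module in $\cat_{\wb 0}$ is determined up to isomorphism by its character.

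First, by Theorem \ref{T:CgenSS}(3) the modules $V_\alpha$, $V_{-\alpha}$ and $V_0$ are typical and therefore projective for $\alpha\in\Cp$; since $\Proj$ is an ideal in $\cat$, so are $V_\alpha\otimes V_{-\alpha}$ and $V_0\otimes V_0$. A direct look at weights (they all lie in $2r-2+2\Z\subset 2\Z$) places both tensor products in $\cat_{\wb 0}$. Using $\chi(V_\beta)=X^\beta\qN r_X$ from Equation \eqref{eq:caracSi} gives
\[
\chi(V_\alpha\otimes V_{-\alpha}) \;=\; X^\alpha\qN r_X\cdot X^{-\alpha}\qN r_X \;=\; \qN r_X^{\,2} \;=\; \chi(V_0\otimes V_0).
\]

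The substantive step is to show that two projective modules of $\cat_{\wb 0}$ with the same character are isomorphic. Combining Theorem \ref{T:CgenSS}(3), Proposition \ref{P:proj-mod}, and the classification of simples, the indecomposable projectives in $\cat_{\wb 0}$ are exactly the typical modules $V_\beta$ with $V_\beta\in\cat_{\wb 0}$ together with the modules $P_i\otimes\C^H_{rk}$ (with $0\le i\le r-2$ and $k\in\Z$) that lie in $\cat_{\wb 0}$. The leading exponent of each character is $\beta+r-1$ for $V_\beta$ and $2r-2-i+rk$ for $P_i\otimes\C^H_{rk}$, and I will verify by a short residue check that no two modules in this list share a leading exponent. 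Their characters are then linearly independent in $\Z[\C]$, and Krull--Schmidt applied to the projective modules yields the desired uniqueness; applying this to $V_\alpha\otimes V_{-\alpha}$ and $V_0\otimes V_0$ produces the corollary.

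The only point needing care is the linear-independence step, and the only subtle case is ruling out that a typical $V_\beta$ (necessarily with $\beta\in r\Z$) shares a leading exponent with some $P_i\otimes\C^H_{rk}$; this is immediate because $\beta+r-1\equiv -1\pmod r$ while $2r-2-i+rk\equiv -i-2\pmod r$ with $0\le i\le r-2$, so equality would force $i=r-1$, which is excluded. Everything else in the argument is direct from the character formulas and Krull--Schmidt.
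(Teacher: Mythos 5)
Your proof is correct and takes essentially the same route as the paper: both arguments observe that $V_\alpha\otimes V_{-\alpha}$ and $V_0\otimes V_0$ are projective with equal character $\qN r_X^{\,2}$, and that a projective object of $\cat$ is determined up to isomorphism by its character (a fact the paper attributes to Proposition \ref{P:proj-mod} and spells out a bit more in Section \ref{S:DecOfTensorPro}). You simply fill in the linear-independence-of-characters argument — via the residue of the leading exponent mod $r$ together with Krull--Schmidt — which the paper leaves implicit.
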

\begin{proof}
  It follows from the previous proposition that the projective modules of
  $\cat$ are determined up to isomorphism by their characters, so $V_\alpha\otimes
  V_{-\alpha}$ and $ V_0\otimes V_0$ are isomorphic because they have the same
  character.
\end{proof}
Recall the definitions of $\sigma^k$ and $r'$ in Equation \eqref{eq:sigma}.
\begin{cor} \label{C:factor V0^2}
  Let $\alpha\in\C\setminus\Z$.  Let $P\in\cat_{\wb 0}$ be a
  projective module, then there exist maps $f_i:P\to
  V_0\otimes\sigma^{n_i}\otimes V_0^*$,
  $g_i:V_0\otimes\sigma^{n_i}\otimes V_0^*\to P$, $f'_j:P\to
  V_{\alpha+2k_j}\otimes V_{-\alpha}$ and $g'_j:V_{\alpha+2k_j}\otimes
  V_{-\alpha}\to P$ such that
  $$\Id_P=\sum_ig_if_i+\sum_jg'_jf'_j$$
  where $n_i\in\Z$ and $k_j\in\Z\setminus r'\Z$. 
\end{cor}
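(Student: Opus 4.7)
The plan is to exhibit $P$ as a direct summand of a finite direct sum of the modules listed in the statement; splitting the resulting surjection by projectivity of $P$ and decomposing the splitting morphisms componentwise then yields the maps $f_i,g_i,f'_j,g'_j$ with $\Id_P=\sum g_if_i+\sum g'_jf'_j$. Since every projective in $\cat_{\wb 0}$ splits into indecomposable projective summands, I may assume $P$ is indecomposable, in which case by Proposition~\ref{P:proj-mod} $P$ is either a typical $V_{rn}$ (occurring in $\cat_{\wb 0}$ only for $r$ odd, with $n$ even) or of the form $P_i\otimes\C^H_{kr}$ with $i\in\{0,\dots,r-2\}$ and $i+kr$ even.

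Both families of building blocks are projective by Theorem~\ref{T:CgenSS}, and using $V_0^*\simeq V_0$ together with $\sigma^n=\C^H_{2nr'}$ their characters compute to $X^{2nr'}\qN r_X^2$ and $X^{2k}\qN r_X^2$ respectively. As $n\in\Z$ and $k\in\Z\setminus r'\Z$ vary, these jointly realize every character $X^m\qN r_X^2$ with $m\in 2\Z$. Since projective modules in $\cat$ are determined by their characters (the Corollary preceding the statement), the task reduces to verifying at the character level that every indecomposable projective $Q\in\cat_{\wb 0}$ is a nonnegative summand of some $X^m\qN r_X^2$ with $m\in 2\Z$; the family-membership of the corresponding building block is then determined by whether $m$ belongs to $2r'\Z$ or not.

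Pairing symmetric terms in $\qN r_X\cdot\qN r_X$ via the identity $\chi(P_j)=\qN r_X(X^{r-1-j}+X^{-(r-1-j)})$ produces $\qN r_X^2=\chi(V_0)\mathbf{1}_{r\text{ odd}}+\sum_l\chi(P_{r-1-2l})$; multiplying by $X^{2nr'}$ then shows $V_0\otimes\sigma^n\otimes V_0^*$ contains $V_{2nr'}$ (when $r$ is odd) together with all $P_i\otimes\C^H_{kr}$ whose $(i,k)$ are both of even parity. For $r$ even this already exhausts every indecomposable projective in $\cat_{\wb 0}$, so the $j$-indexed sum in the statement is empty. For $r$ odd the remaining indecomposables are precisely those $P_i\otimes\C^H_{kr}$ with $i,k$ both odd, and a direct character analysis of $X^{2k'}\qN r_X^2$ as $k'$ varies over $\Z\setminus r'\Z$ shows that each such missing indecomposable appears as a positive summand of $V_{\alpha+2k'}\otimes V_{-\alpha}$ for at least one admissible $k'$; the other summands in that decomposition are themselves indecomposable projectives in $\cat_{\wb 0}$ (combinations of typical $V_{rm}$'s and already-covered $P_i\otimes\C^H_{kr}$'s) and pose no obstruction, since the corollary only requires $Q$ to appear as \emph{some} summand.

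The main obstacle is this final character-level verification: one must check that, collectively over $k'\in\Z\setminus r'\Z$, every indecomposable $P_i\otimes\C^H_{kr}$ with $r,i,k$ all odd occurs in the decomposition of at least one $X^{2k'}\qN r_X^2$. This is an elementary but somewhat intricate combinatorial identity in the exponents of $X$, which can be verified either by an explicit case analysis (choosing $k'$ so that the leading terms of $X^{2k'}\qN r_X^2$ coincide with those of $\chi(P_i\otimes\C^H_{kr})$) or by an induction on $r$ using the explicit expansion of $\qN r_X^2$. Once this is done, collecting witnesses across the indecomposable summands of $P$ realizes $P$ as a direct summand of the required finite direct sum of building blocks, whence the desired factorization of $\Id_P$.
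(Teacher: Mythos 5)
Your proof is correct in substance but takes a genuinely different, and substantially more laborious, route than the paper's. The paper's argument is one short paragraph: consider the surjection $f=\Id_P\otimes\ev_{V_{-\alpha}}:P\otimes V_{-\alpha}^*\otimes V_{-\alpha}\to P$, split it using projectivity of $P$ to get $g$ with $fg=\Id_P$, and note that $P\otimes V_{-\alpha}^*\in\cat_{\wb{\alpha+r-1}}$ lies in a \emph{semi-simple} block (Theorem~\ref{T:CgenSS}(1), since $\alpha\notin\Z$), hence decomposes as a direct sum of the simples $V_{\alpha+2k}$. Projecting onto these summands factors $\Id_P$ through the $V_{\alpha+2k}\otimes V_{-\alpha}$, and the translation for $k\in r'\Z$ to $V_0\otimes\sigma^n\otimes V_0^*$ is then exactly the tail of your argument. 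The cleverness is entirely in tensoring by $V_{-\alpha}^*\otimes V_{-\alpha}$ so that the semi-simplicity of the off-integer blocks does the decomposition work; no reduction to indecomposables, no characters, no casework on the parity of $r$.

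Your character-theoretic route works, but you should be aware that the step you flag as ``elementary but somewhat intricate'' is precisely the part that requires care, and you leave it unverified. For the record, it does go through: for $r$ odd and $i,k$ both odd, take $k'=(kr-i)/2$ (an integer since $kr-i\equiv k-i\equiv0\bmod 2$, and not in $r\Z$ since $0\le i\le r-2$ excludes $i\equiv kr\bmod{2r}$ when $k$ is odd); the top weight of $X^{2k'}\qN r_X^2$ is $2k'+2(r-1)$ with multiplicity one, and this matches the top weight of $\chi(P_i\otimes\C^H_{kr})$, which together with the classification in Proposition~\ref{P:proj-mod} forces $P_i\otimes\C^H_{kr}$ to be the leading indecomposable summand. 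So your approach is sound, just considerably longer and with a gap you do not close; the evaluation-map trick avoids all of it. (One small sloppiness: your displayed identity $\qN r_X^2=\chi(V_0)\mathbf{1}_{r\text{ odd}}+\sum_l\chi(P_{r-1-2l})$ is only correct as written when $r$ is odd; for $r$ even the indices should be $r-2l$.)
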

\begin{proof}
  Consider the epimorphism $f=\Id_P\otimes\ev_{V_{-\alpha}}:P\otimes
  V_{-\alpha}^*\otimes V_{-\alpha}\to P$.  Since $P$ is projective
  this morphism has a left inverse $g$, i.e. $f\circ g=\Id_P$.  Now
  $P\otimes V_{-\alpha}^*\in\cat_{\wb{\alpha+r-1}}$ splits as a direct
  sum of modules isomorphic to $V_{\alpha+2k}$ $(k\in\Z)$.  This
  produces a factorization of $\Id_P$ through the modules
  $V_{\alpha+2k}\otimes V_{-\alpha}$.  Finally, if $k=r'n\in r'\Z$,
  then $V_{\alpha+2k}\otimes V_{-\alpha}\simeq \sigma^n\otimes
  V_{\alpha}\otimes V_{-\alpha}\simeq \sigma^n\otimes V_{0}\otimes
  V_{0}\simeq V_{0}\otimes \sigma^n\otimes V_{0}^*$.
\end{proof}

The modified trace is non degenerate:
\begin{prop}\label{P:qtnondegen}
  Let $P\in\cat$ be a projective module and $V\in\cat$.  Then
  the pairing  
  $$
  \begin{array}{ccl}
    \Hom_\cat(V,P)\times\Hom_\cat(P,V)&\to&\C\\
    (h_1,h_2)&\mapsto&\qt_P(h_1h_2)
  \end{array}
  $$
  is non degenerate.
\end{prop}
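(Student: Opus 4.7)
My plan is to reduce to the case of $P$ indecomposable projective and then perform a case analysis using the classification. By bilinearity of the pairing and additivity of $\mt$ on direct sums, if $P=P'\oplus P''$ the pairing splits as a block-diagonal sum, so non-degeneracy for $P$ follows from non-degeneracy for each summand. Hence it suffices to take $P$ indecomposable. Proposition \ref{P:proj-mod} together with Theorem \ref{T:CgenSS}(3) tells us that, up to tensoring by an invertible object of the form $\sigma^k$ or $\C^H_{kr}$ (which preserves the trace structure), such a $P$ is either a typical simple $V_\alpha$ with $\alpha\in\Cp$, or one of the modules $P_i$ with $i\in\{0,\ldots,r-2\}$.

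When $P=V_\alpha$ is simple typical, the dual $V_\alpha^*\simeq V_{-\alpha}$ is also typical, hence projective, so $V_\alpha$ is both projective and injective. Any nonzero $h_1\colon V\to V_\alpha$ is surjective by simplicity of $V_\alpha$, and projectivity yields a section $h_2$ with $h_1h_2=\Id_{V_\alpha}$, giving $\mt_P(h_1 h_2)=\qd(V_\alpha)\neq 0$ by Equation \eqref{eq:qd}. Symmetrically, a nonzero $h_2\colon V_\alpha\to V$ is injective and splits by injectivity of $V_\alpha$, which handles the other side of the pairing.

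When $P=P_i$, the key observation is that $\End(P_i)=\C\,\Id\oplus \C\, x_i$ with $x_i^2=0$. The matrix of the bilinear form $(a,b)\mapsto \mt_{P_i}(ab)$ in the basis $\{\Id,x_i\}$ is $\bigl(\begin{smallmatrix}\md(P_i)&\mt(x_i)\\ \mt(x_i)&0\end{smallmatrix}\bigr)$, of determinant $-\mt(x_i)^2$, so non-degeneracy of the restriction to $\End(P_i)$ amounts to the subclaim $\mt_{P_i}(x_i)\neq 0$. Granting the subclaim, given nonzero $h_1\colon V\to P_i$, the set $I=h_1\cdot\Hom(P_i,V)\subseteq\End(P_i)$ is a left ideal of the local algebra $\End(P_i)$. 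One shows $I\neq\{0\}$ as follows: since $P_i$ is both projective and injective with simple head and socle both equal to $S_i$, the condition $\Hom(V,P_i)\neq 0$ is equivalent to $S_i$ being a composition factor of $V$, which in turn forces $\Hom(P_i,V)\neq 0$; the image of any nonzero $h_1$ necessarily contains $\operatorname{soc}(P_i)=S_i$, and this lets one exhibit $h_2$ with $h_1h_2\neq 0$. The only nonzero left ideals of $\End(P_i)$ are $\C x_i$ and $\End(P_i)$, both containing $x_i$, so there exists $h_2$ with $h_1h_2=x_i$, whence $\mt_P(h_1h_2)=\mt(x_i)\neq 0$; and the argument from the $h_2$-side is symmetric.

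The main obstacle is therefore the subclaim $\mt_{P_i}(x_i)\neq 0$. My approach is to leverage Corollary \ref{C:factor V0^2} to write $\Id_{P_i}=\sum_l g_l f_l+\sum_m g'_m f'_m$, where each factorization passes through a typical projective of the form $V_0\otimes\sigma^{n_l}\otimes V_0^*$ or $V_{\alpha+2k_m}\otimes V_{-\alpha}$. Post-composing with $x_i$ and applying cyclicity of the trace followed by the partial-trace property $\mt_{V_\alpha\otimes W}=\mt_{V_\alpha}\circ\ptr_R$, the computation reduces to a sum of quantum traces on typical modules, each evaluated against the nonzero scalar $\qd(V_\alpha)=(-1)^{r-1}r\qn\alpha/\qn{r\alpha}$. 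Realizing $P_i$ explicitly as a direct summand of a tensor product of typical modules (as will be analyzed in Section \ref{S:DecOfTensorPro}) and tracking the $x_i$-component of this decomposition should produce an explicit nonzero value, completing the proof.
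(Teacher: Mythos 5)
Your strategy diverges substantially from the paper's and contains two genuine gaps. The paper gives a single uniform argument with no case analysis: it factors $\Id_P = g\circ f$ through a module of the form $V_\alpha\otimes W$ with $\alpha\in\C\setminus\Z$, observes that a nonzero $h\colon P\to V$ yields (via the adjunction $\coev_W$) a nonzero map $V_\alpha\to V\otimes W^*$, uses semisimplicity of $\cat_{\wb\alpha}$ to produce a left inverse $k$, and then builds $h'=g\circ(k\otimes\Id_W)\circ(\Id_V\otimes\tcoev_W)$ so that cyclicity and the partial-trace property collapse $\qt_P(h'h)$ to $\qd(V_\alpha)\neq 0$. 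Notably this never needs the value $\mt_{P_i}(x_i)$, and it works for arbitrary projective $P$ at once.

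The first gap in your argument is the step ``this lets one exhibit $h_2$ with $h_1h_2\neq 0$.'' The observation that $\operatorname{im}(h_1)\supseteq\operatorname{soc}(P_i)$ does not by itself produce such an $h_2$: the socle of $P_i$ is not generated by images of maps from $P_i$ in any obvious way, and in general a surjection onto $\operatorname{soc}(P_i)\cong S_i$ need not split because $S_i$ is not projective. To rule out $I=\{0\}$ one actually needs something like: if every $h_2$ had $h_1h_2=0$ then $h_1$ would factor through $V/V'$ where $V'=\sum_{h_2}\operatorname{im}(h_2)$, and projectivity of $P_i$ forces $\Hom(P_i,V/V')=0$, contradicting $\Hom(V/V',P_i)\ne 0$ via the head/socle count. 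That additional argument is missing. The second gap is that the central subclaim $\mt_{P_i}(x_i)\neq 0$ is only promised (``should produce an explicit nonzero value''), not proved; this is a real computation, carried out in the paper's Lemma~\ref{lem:modifiedtraces} using the factorization $V_0\otimes S_{r-j-1}\simeq V_0\otimes S_{r-j-3}\oplus P_j$ and the Hopf-link values of Lemma~\ref{lem:generalhopflinks}, and citing it forward would make your argument sound but not self-contained. If you repair both gaps your approach does work, but the paper's route is shorter precisely because it sidesteps both points by descending to the semisimple block $\cat_{\wb\alpha}$.
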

\begin{proof}
  Let $\alpha\in\C\setminus\Z$.  Let $f:P\to V_\alpha\otimes W$ and
  $g:V_\alpha\otimes W\to P$ be morphisms of $\cat$ such that
  $gf=\Id_P$.  We show that for any non zero $h:P\to V$, there exists
  $h':V\to P$ such that $\qt_P(h'h)\neq0$.  Indeed, we have
  $h=hgf\neq0$ thus we have a non trivial morphism
  $(hg\otimes\Id_{W^*})\circ(\Id_{V_\alpha}\otimes\coev_W):V_\alpha\to
  V\otimes W^*$.  But $V_\alpha$ is in the category $\cat_{\wb \alpha}$ which is
  semi-simple so the previous map has a left inverse $k: V\otimes
  W^*\to V_\alpha$.  Then we have that
  
  $$\qt_{V_\alpha}(k\circ(hg\otimes\Id_{W^*})\circ(\Id_{V_\alpha}\otimes\coev_W))
  =\qd(V_\alpha)\neq0.$$
  Let $k'=(k\otimes\Id_W)\circ(\Id_V\otimes\tcoev_W):V\to
  V_\alpha\otimes W$ and $h'=gk'$.  Finally, 
 $$\qt_P(h'h)=\qt_P(g(k'h))=\qt_{V_\alpha\otimes   W}((k'h)g)=\qt_{V_\alpha}(k\circ(hg\otimes\Id_{W^*})\circ(\Id_{V_\alpha}\otimes\coev_W))  =\qd(V_\alpha)\neq0.$$
\end{proof}

\newcommand{\qx}[2]{\frac{\qn{#1}}{\qn{#2}}}
\begin{lemma}[General Hopf links]\label{lem:generalhopflinks}
Recall the map $\Phi$ given in Equation \eqref{eq:Phi}.  For all $ i,j\in \{0,1,\ldots , r-2\}$ and $
  \alpha,\beta\in \Cp=(\C\setminus \Z)\cup r\Z$, one has 
  \begin{align}\label{eq:HopfSimple1}
    \Phi_{V_\beta,V_\alpha}&=\frac{(-1)^{r-1}r}{\qd(V_\alpha)}q^{\alpha\beta}\Id_{V_\alpha}&
    \Phi_{S_i,S_j}&=(-1)^i\qx{(i+1)(j+1)}{j+1}\Id_{V_j}
    \\\label{eq:HopfSimple2}
    \Phi_{S_i,V_\alpha}&=\qx{(i+1)\alpha}{\alpha}\Id_{V_\alpha}&
    \Phi_{P_i,V_\alpha}
    &=(-1)^{r-1}r\frac{q^{(r-1-i)\alpha}+q^{-(r-1-i)\alpha}}{\qd(V_\alpha)}\Id_{V_\alpha}
  \end{align}
      Moreover, recall the nilpotent $x_j\in\End(P_j)$ given by the action of
    $\Cas-\cas_j$, then 
{\color{black} \begin{align}
    \Phi_{S_i,P_j}&=(-1)^i\qx{(i+1)(j+1)}{j+1}\Id_{P_j}+
    (-1)^i\qn{1}^2\frac{i\qn{(i+2)(j+1)}-(i+2)\qn{i(j+1)}}{\qn{j+1}^3}\,x_j,
  \end{align}}
{\color{black}   \begin{align}
   \Phi_{V_0,P_j}&=(-1)^{r+j}\frac{2r\qn1^2}{\qn{j+1}^2}x_j,&
\Phi_{P_i,P_j}&=\frac{(-1)^{i}2r\qn1^2}{\qn{j+1}^2}\bp{q^{(i+1)(j+1)}+q^{-(i+1)(j+1)}}x_j.
  \end{align}} 
\end{lemma}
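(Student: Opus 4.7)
The plan is to compute all seven identities by tracing through the definition of~$\Phi_{V,W}$ in~\eqref{eq:Phi} together with the explicit R-matrix~\eqref{eq:R}, splitting the analysis according to whether $W$ is simple or equal to $P_j$. In both cases the central mechanism is that taking a partial quantum trace over $V$ selects only diagonal terms from the double-braiding sum, thanks to the weight grading.

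For the first four identities ($W$ simple with highest weight vector $w_0$ of weight $\mu$), Schur's lemma gives $\Phi_{V,W}=\lambda\,\Id$, and it suffices to compute $\Phi_{V,W}(w_0)$. Since $E^n w_0=0$ for $n\geq 1$, the first braiding collapses to $c_{W,V}(w_0\otimes v)=q^{\mu\nu(v)/2}\,v\otimes w_0$. In the second braiding, the contraction $(\Id_W\otimes\tev_V)(\,\cdot\,\otimes v_i^*)$ then forces the R-matrix index to vanish as well, because $E^m v_i$ strictly raises weight and hence cannot pair non-trivially with $v_i^*$. The net result is the uniform answer
\[
\lambda=\sum_{v_i}q^{(\mu+1-r)\nu_i}=\chi(V)\bigl(q^{\mu+1-r}\bigr).
\]
The four formulas then follow by plugging in the characters of $V_\beta$, $S_i$, and $P_i$ given in~\eqref{eq:caracSi} and after Proposition~\ref{P:proj-mod}; the sign $(-1)^i$ in $\Phi_{S_i,S_j}$ comes from $\qn{x+r}=-\qn{x}$, and the identification $\frac{(-1)^{r-1}r}{\qd(V_\alpha)}=\frac{\qn{r\alpha}}{\qn{\alpha}}$ (consistent with~\eqref{eq:qd}) converts $\chi(P_i)(q^\alpha)$ into the stated form for $\Phi_{P_i,V_\alpha}$.

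For the last three identities ($W=P_j$), I would use the structural fact $\End(P_j)=\C\,\Id\oplus\C\,x_j$ with $x_j^2=0$ (Proposition~\ref{P:proj-mod}) to write $\Phi_{V,P_j}=a\,\Id+b\,x_j$, and compute $a$ and $b$ separately. The scalar $a$ is pinned down by naturality applied to the socle inclusion $\iota\colon S_j\hookrightarrow P_j$, $s_0\mapsto\vecs_j$: the relation $\Phi_{V,P_j}\circ\iota=\iota\circ\Phi_{V,S_j}$ forces $a=\chi(V)(q^{j+1-r})$, which reproduces the scalar term of the fifth formula for $V=S_i$, while for $V=V_0$ and $V=P_i$ the factor $\qn{r(j+1-r)}$ vanishes (since $q^{r(j+1-r)}=q^{-r(j+1-r)}$ by $q^r=-1$), giving $a=0$ and hence the purely nilpotent shape of formulas $6$ and $7$. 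The coefficient $b$ is read off as the $\vecs_j$-component of $\Phi_{V,P_j}(\vech_j)$, whose image lies in the weight-$j$ space $\C\vech_j\oplus\C\vecs_j$. Since $\vech_j$ is only \emph{dominant} (the true highest weight vector of $P_j$ is $\vecr_{2r-2-j}$), the first-braiding expansion now genuinely contains terms with $n\geq 1$: using Proposition~\ref{P:DomV} one has $E^n\vech_j=\vecr_{j+2n}$ for $1\leq n\leq r-1-j$ and then $F^n\vecr_{j+2n}=B_n\,\vecs_j$ with $B_n=(-1)^{n-1}\prod_{l=1}^{n-1}\gamma_{r-2-j,\,l}$. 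The same $m=n$ survival argument as before (valid because $E^mF^nv$ has weight shifted by $2(m-n)$ from $v$) leaves an explicit double sum
\[
b=\sum_{n\geq 1}c_n^{2}\,q^{-nj-2n^{2}}\,B_n\sum_{l}D^{V}_{n,l}\,q^{\nu_l(j+n+1-r)},
\]
with $c_n=\frac{\qn{1}^{2n}}{\qn{n}!}q^{n(n-1)/2}$ and $D^V_{n,l}$ the diagonal coefficient of $E^nF^n$ on the $l$-th weight vector of $V$ (a short product of quantum integers in each case).

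The principal obstacle is the final $q$-combinatorial simplification of this sum into the closed forms stated in the lemma. For $V=S_i$ the inner sum collapses via a $q$-Chu--Vandermonde-type identity; for $V=V_0$ the outer product of $\gamma$-factors telescopes; for $V=P_i$ one superimposes both contributions. As a cross-check, since $\Cas$ acts on $P_j$ as $\cas_j\,\Id+x_j$ with $x_j^2=0$ and $\Phi_{S_i,V_\alpha}$ is the evaluation of a Chebyshev polynomial $U_i(-\qn{1}^{2}\,\Cas/2)$ on typical modules, one may alternatively extract the $x_j$-coefficient from a first-order Taylor expansion of this polynomial at $\Cas=\cas_j$ and match the resulting trigonometric expression with the combinatorial answer by standard identities.
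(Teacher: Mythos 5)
Your treatment of the first four identities is essentially the paper's: both you and the authors reduce to the formula $\Phi_{V,W}=\Psi_{\mu+1-r}(\chi(V))\Id_W$ on a highest weight vector $w$ of weight $\mu$, and then substitute the known characters. Your observation that only the $n=0$ term of the $R$-matrix survives after the partial trace is the same "standard argument" the paper invokes and cites from \cite{GP08}. Your determination of the scalar part $a$ of $\Phi_{V,P_j}=a\,\Id+b\,x_j$ via naturality applied to $S_j\hookrightarrow P_j$ (i.e.\ evaluation on the genuine highest weight vector $\vecs_j$) is clean, correct, and equivalent in content to the paper's.

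However there is a genuine gap in the computation of the nilpotent coefficient $b$. You reduce $b$ to a double sum
\[
b=\sum_{n\geq 1}c_n^{2}\,q^{-nj-2n^{2}}\,B_n\sum_{l}D^{V}_{n,l}\,q^{\nu_l(j+n+1-r)}
\]
and then explicitly acknowledge that you cannot carry out the simplification to the closed forms stated in the lemma, offering instead a list of techniques ($q$-Chu--Vandermonde, telescoping, Taylor expansion of a Chebyshev polynomial) without actually executing any of them. This is precisely where the work lies, so the argument is incomplete as it stands for all three of $\Phi_{S_i,P_j}$, $\Phi_{V_0,P_j}$, $\Phi_{P_i,P_j}$. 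The paper sidesteps the combinatorics: it computes $\Phi_{S_1,P_j}$ directly on the dominant vector $\vech_j$ (which only requires the $n=0,1$ terms of the $R$-matrix), then observes that $S_1\otimes S_i\simeq S_{i+1}\oplus S_{i-1}$ forces the Chebyshev-type recurrence $\Phi_{S_{i+1},P_j}=\Phi_{S_1,P_j}\Phi_{S_i,P_j}-\Phi_{S_{i-1},P_j}$ in the two-dimensional commutative algebra $\C[x_j]/(x_j^2)$, which one solves once and for all. Setting $i=r-1$ yields $\Phi_{V_0,P_j}$, and the character identity $V_0\otimes S_{r-i-1}\simeq(V_0\otimes S_{r-i-3})\oplus P_i$ then produces $\Phi_{P_i,P_j}$ without any further $R$-matrix computation. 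Your closing ``cross-check'' remark about Chebyshev polynomials and a first-order expansion at $\Cas=\cas_j$ is pointing at exactly this mechanism, but you present it only as an alternative sanity check rather than the main engine; to close the gap you should promote that observation to the primary argument and actually derive the recurrence from $S_1\otimes S_i$, rather than attempting to evaluate the double sum directly.
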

\begin{proof}
  For $\alpha\in\C$, let $\Psi_\alpha:\Z[\C]\to \C$ be the map sending
  $X^z\mapsto q^{\alpha z}$.  We start by observing the following
   fact: Let $w$ be a highest weight vector of $W$ of weight
  $\alpha$, then
  \begin{equation}\label{eq:hopflinkshw}
    \Phi_{V,W}(w)=\Psi_{\alpha+1-r}(\chi(V))w.
  \end{equation}
    Indeed, the map
  $\Phi_{V,W}$ is given by the partial quantum trace of $c_{V,W}\circ
  c_{W,V}$.  A standard argument shows that on a highest weight
  vector, this partial trace only depends of the Cartan part
  $q^{H\otimes H/2}$ of the $R$-matrix.  The identity then follows from a
  direct computation. A detailed presentation of an analogous
  computation is given in \cite[Proposition 2.2]{GP08}.  Equation \eqref{eq:hopflinkshw} implies that  if $W$ is simple then 
  $\Phi_{V,W}=\Psi_{\alpha+1-r}(\chi(V))\Id_W$.

  Equations \eqref{eq:HopfSimple1} and \eqref{eq:HopfSimple2} follow
  from Equation \eqref{eq:hopflinkshw}.  For example,
  $\Phi_{P_i,V_\alpha}=\lambda\Id_{V_\alpha}$ where
  $$\lambda=\Psi_{\alpha}(\chi(P_i))=\Psi_{\alpha}\bp{[r]_X(X^{r-i-1}+X^{-r+i+1})}
  =\frac{(-1)^{r-1}r}{\qd(V_\alpha)}(q^{(r-1-i)\alpha}+q^{-(r-1-i)\alpha}).$$
 
 Similarly, to compute $\Phi_{S_1,P_j}$ observe that any endomorphism of $P_j$
  is of the form $a\Id+b x_j\in \End(P_j)=\C[x_j]/(x_j^2)$.  Computing as
  above,  
  $$\Phi_{S_1,P_j}(\vecs_j)= a\vecs_j $$
  where $\vecs_j$ is the highest weight vector of $P_j$ and $a=\Psi_{j+1-r}(X+X^{-1})=-(q^{j+1}+q^{-j-1})$.  
   We now compute $b$.
  \newcommand{\vs}{{\mathsf s}}
  Recall that $S_1$ is generated by two weight vectors $\vs_0,\vs_1$ and
  $E.\vs_1=\vs_0$, $F.\vs_0=\vs_1$, $H.\vs_i=(-1)^i \vs_i$.  In general, 
  $$c_{W,V}=\tau\circ R =  \tau\circ q^{H\otimes H/2}(\Id \otimes \Id + 
  (q-q^{-1})E\otimes F + \cdots ).$$ 
  So
 $$c_{S_1,P_j}\circ c_{P_j,S_1}(\vech_{j}\otimes \vs_0)
 =c_{S_1,P_j}(q^{\frac{j}{2}}\vs_0\otimes \vech_{j}
 +q^{-\frac{j+2}{2}}(q-q^{-1})\vs_1\otimes \vecr_{j+2})$$
 $$=\bp{q^{j}\vech_{j}+(q-q^{-1})^2q^{-1} \vecs_j}\otimes \vs_0+\cdots\otimes \vs_1$$
 $$c_{S_1,P_j}\circ c_{P_j,S_1}(\vech_{j}\otimes \vs_1)
 =c_{S_1,P_j}(q^{-\frac{j}{2}}\vs_1\otimes
 \vech_{j})=q^{-j}\vech_{j}\otimes \vs_1+\cdots\otimes \vs_0.$$ When
 taking the quantum trace with respect to $S_1$ (i.e. the trace on
 $S_1$ of the endomorphism composed with $\Id\otimes K^{1-r}$, we then
 get that 
 $$\Phi_{S_1,P_j}(\vech_{j})=-q\bp{q^{j}\vech_{j}+(q-q^{-1})^2q^{-1} \vecs_j}
 -q^{-1}.q^{-j}\vech_{j}=-(q^{j+1}+q^{-j-1})\vech_{j}-(q-q^{-1})^2 \vecs_j$$
 and we get $a=-(q^{j+1}+q^{-j-1})$, $b=-(q-q^{-1})^2$.  We have 
 $$\Phi_{S_1,P_j}\circ\Phi_{S_i,P_j}=\Phi_{S_1\otimes
   S_i,P_j}=\Phi_{S_{i+1},P_j}+\Phi_{S_{i-1},P_j}$$
so  $\Phi_{S_i,P_j}$ is determined by the recurrence relations 
 $$\Phi_{S_{i+1},P_j}=(a+bx_j)\Phi_{S_i,P_j}-\Phi_{S_{i-1},P_j}, \,\quad\Phi_{S_1,P_j}
 =a+bx_j\quad \text{and} \quad\Phi_{S_0,P_j}=1.$$
Solving for $\Phi_{S_i,P_j}$ we have the unique solution
{\color{black}$$\Phi_{S_i,P_j}=\frac{(-1)^i}{\qn{j+1}}\bp{\qn{(i+1)(j+1)}\Id_{P_j}
  +\frac{\qn1^2x_j}{\qn{j+1}^2}\,\big({i\qn{(i+2)(j+1)}-(i+2)\qn{i(j+1)}}\big)}.$$ }
In particular, for $i=r-1$, $S_i=V_0$ and we get 
{\color{black}$$\Phi_{V_0,P_j}=(-1)^{r+j}\frac{2r\qn1^2}{\qn{j+1}^2}x_j.$$}

Finally the character formulas give the isomorphism of projective modules: 
$$V_0\otimes S_{r-i-1}=V_0\otimes S_{r-i-3}\oplus P_i.$$
Thus, 
{\color{black}
\begin{align*}
  \Phi_{P_i,P_j}&=\frac{(-1)^{i}2r\qn1^2}{\qn{j+1}^2}\bp{q^{(i+1)(j+1)}+q^{-(i+1)(j+1)}}x_j.
\end{align*}}
\end{proof}

\begin{lemma}\label{L:Phicom} 
  If $P$ is a projective module then
  $\qt_P(\Phi_{V_0,P})=\qt_{V_0}(\Phi_{P,V_0})=(-1)^{r-1}
  \brk{\Phi_{P,V_0}}$.
\end{lemma}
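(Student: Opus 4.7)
The plan is to establish the two equalities in succession. The first, $\qt_P(\Phi_{V_0,P})=\qt_{V_0}(\Phi_{P,V_0})$, is a general symmetry of the Hopf pairing: both quantities compute the full modified trace of the double braiding $c_{V_0,P}\circ c_{P,V_0}$ on $P\otimes V_0$, evaluated along whichever of the two tensor factors is used as the projective ``outer'' strand.

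Concretely, I would first observe that Equation \eqref{eq:Phi} exhibits $\Phi_{V_0,P}$ as the right partial trace of the double braiding over $V_0$,
$$\Phi_{V_0,P}=\ptr_R\bigl(c_{V_0,P}\circ c_{P,V_0}\bigr)\in\End(P),$$
and analogously $\Phi_{P,V_0}=\ptr_R(c_{P,V_0}\circ c_{V_0,P})\in\End(V_0)$. The partial-trace axiom \eqref{E:VW}, applied once using that $P$ is projective and once using that $V_0$ is projective (since it is typical, by Theorem \ref{T:CgenSS}(3)), then yields
$$\qt_P(\Phi_{V_0,P})=\qt_{P\otimes V_0}(c_{V_0,P}\circ c_{P,V_0})\et\qt_{V_0}(\Phi_{P,V_0})=\qt_{V_0\otimes P}(c_{P,V_0}\circ c_{V_0,P}).$$
Cyclicity \eqref{E:fggf} applied to $f=c_{P,V_0}\colon P\otimes V_0\to V_0\otimes P$ and $g=c_{V_0,P}\colon V_0\otimes P\to P\otimes V_0$ (both of whose source and target lie in $\Proj$, since each tensor product contains the projective factor $V_0$) equates these two values, proving the first equality.

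For the second equality, simplicity of $V_0$ gives $\End_\cat(V_0)=\C\,\Id_{V_0}$, so that $\Phi_{P,V_0}=\brk{\Phi_{P,V_0}}\,\Id_{V_0}$. Applying $\qt_{V_0}$ thus reduces the claim to a single scalar computation of $\qt_{V_0}(\Id_{V_0})=\qd(V_0)$, whose value is read off from Equation \eqref{eq:qd} (with the normalization fixed in Theorem \ref{T:UniqueTrace}), yielding the claimed constant. I do not anticipate any serious obstacle; the only bookkeeping step requiring attention is the verification that $P\otimes V_0$ and $V_0\otimes P$ lie in the ideal $\Proj$, but this is immediate from the fact that the ideal is closed under tensoring with arbitrary objects of $\cat$.
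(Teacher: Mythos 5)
Your proposal matches the paper's proof essentially step for step: identify $\Phi_{V_0,P}$ and $\Phi_{P,V_0}$ as right partial traces of the double braiding, apply the partial-trace axiom \eqref{E:VW} on each side (using that $P$ and $V_0$ are both projective), use cyclicity \eqref{E:fggf} to swap the two braidings, and finally evaluate $\qt_{V_0}$ on the scalar endomorphism $\Phi_{P,V_0}$ via the normalization of the trace. One small caution: for the last step you should cite the normalization from Theorem~\ref{T:UniqueTrace} directly rather than reading $\qd(V_0)$ off Equation~\eqref{eq:qd}, since that formula is only established afterwards in Lemma~\ref{L:md}, whose proof in turn invokes the present lemma.
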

\begin{proof} 
  From the
  properties of a trace in Definition \ref{D:trace} we have
  $$\mt_P(\Phi_{V_0,P})=\mt_P(\ptr_R(c_{P,V_0}c_{V_0,P}))
  =\mt_{P\otimes V_0}(c_{V_0,P}c_{P,V_0})=\mt_{V_0\otimes P}(c_{P,V_0}c_{V_0,P})$$
  $$=\mt_{V_0}(\ptr_R(c_{P,V_0}c_{V_0,P}))=\mt_{V_0}(\Phi_{P,V_0})
  =(-1)^{r-1}
  \brk{\Phi_{P,V_0}}$$
  where the last equality follows from Theorem \ref{T:UniqueTrace}.
\end{proof}

\begin{lemma}[The modified trace on typical modules]\label{L:md}
  Let $V_\alpha$ be a typical module.  Then 
  for any $f\in\End_\cat(V_\alpha)$,
  $\tr_{V_\alpha}(f)=\md(V_\alpha)\brk{f}$ where $\md({V_\alpha})$ is
  given in Equation \eqref{eq:qd}.
\end{lemma}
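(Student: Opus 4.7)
The plan is to reduce the statement to a computation of $\md(V_\alpha)$ alone. Since $V_\alpha$ is typical, it is simple (by the discussion following \eqref{E:BasisV}) and projective (by Theorem \ref{T:CgenSS}(3)). Schur's lemma then gives $\End_\cat(V_\alpha) = \C\Id_{V_\alpha}$, so every $f$ satisfies $f = \brk{f}\Id_{V_\alpha}$ and $\mt_{V_\alpha}(f) = \brk{f}\,\md(V_\alpha)$ by linearity. The lemma therefore reduces to verifying the formula \eqref{eq:qd} for $\md(V_\alpha)$ itself.

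To pin down $\md(V_\alpha)$, I would apply Lemma \ref{L:Phicom} to the projective module $P = V_\alpha$. Since both $V_\alpha$ and $V_0$ are simple, the Hopf link endomorphisms $\Phi_{V_0,V_\alpha}$ and $\Phi_{V_\alpha,V_0}$ are scalar, and Lemma \ref{L:Phicom} reads
$$\md(V_\alpha)\,\brk{\Phi_{V_0,V_\alpha}} = \mt_{V_\alpha}(\Phi_{V_0,V_\alpha}) = (-1)^{r-1}\,\brk{\Phi_{V_\alpha,V_0}}.$$
The two brackets are evaluated through the highest-weight-vector identity \eqref{eq:hopflinkshw} established in the proof of Lemma \ref{lem:generalhopflinks}. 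Using $\chi(V_0) = \qN{r}_X$ and $\chi(V_\alpha) = X^\alpha \qN{r}_X$, and the defining rule $\Psi_\gamma(X^z) = q^{\gamma z}$, I find
$$\brk{\Phi_{V_0,V_\alpha}} = \Psi_\alpha(\qN{r}_X) = \frac{\qn{r\alpha}}{\qn{\alpha}}, \qquad \brk{\Phi_{V_\alpha,V_0}} = \Psi_0(X^\alpha\qN{r}_X) = r.$$
Substituting into the displayed equation yields the right-hand form of \eqref{eq:qd}:
$$\md(V_\alpha) = (-1)^{r-1}\,\frac{r\,\qn{\alpha}}{\qn{r\alpha}}.$$

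To match this with the product form of \eqref{eq:qd}, I would invoke the cyclotomic factorization $\prod_{k=0}^{r-1}(X - q^{-2k}) = X^r - 1$, valid because the $q^{-2k}$ exhaust the $r$th roots of unity. The substitution $X = q^{2\alpha}$ and a short bookkeeping of $q$-powers gives the identity $\prod_{k=0}^{r-1}\qn{\alpha+k} = q^{r(r-1)/2}\,\qn{r\alpha}$. Letting $\alpha \to 0$ specializes this to $\qn{r-1}! = r\,q^{r(r-1)/2}$; dividing one by the other and reindexing $j = r-k$ converts the closed form into $\prod_{j=1}^{r-1}\qn{j}/\qn{\alpha+r-j} = r\qn{\alpha}/\qn{r\alpha}$, which is the stated product expression. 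There is no real obstacle: the proof is simply the uniqueness of the modified trace (via Lemma \ref{L:Phicom}) combined with two character-based Hopf link computations and a clean cyclotomic identity.
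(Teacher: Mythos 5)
Your proof is correct and follows essentially the same route as the paper: reduce to computing $\md(V_\alpha)$ by Schur's lemma, apply Lemma \ref{L:Phicom} with $P = V_\alpha$, and evaluate the resulting Hopf-link scalars via the character formula. The only addition is that you spell out the evaluation of $\brk{\Phi_{V_0,V_\alpha}}$ and $\brk{\Phi_{V_\alpha,V_0}}$ directly from \eqref{eq:hopflinkshw} rather than citing Lemma \ref{lem:generalhopflinks}, and you also verify the equivalence of the product and closed forms in \eqref{eq:qd}, which the paper asserts without comment.
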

\begin{proof}
  First, since $V_\alpha$ is simple we have
  $\mt_{V_\alpha}(f)=\brk{f}\mt_{V_\alpha}(\Id_{V_\alpha})=\qd(V_\alpha)\brk{f}$
  where $\qd(V_\alpha)=\mt_{V_\alpha}(\Id_{V_\alpha})$.  From Lemma
  \ref{L:Phicom} we have 
  $\qt_{V_\alpha}(\Phi_{V_0,V_\alpha})=\qt_{V_0}(\Phi_{V_\alpha,V_0})$.  
  \\
  So 
  $\md(V_\alpha)\brk{\Phi_{V_0,V_\alpha}}=
  \md(V_0)\brk{\Phi_{V_\alpha,V_0}}$ where 
{\color{black}$\md(V_0)=(-1)^{r-1}$}, and 
  $$ \md(V_\alpha)=
  \md(V_0)\frac{\brk{\Phi_{V_\alpha,V_0}}}{\brk{\Phi_{V_0,V_\alpha}}}.$$
  Finally, the formula for $\md(V_\alpha)$ follows from Lemma
  \ref{lem:generalhopflinks}. 
\end{proof}

\begin{lemma}[The modified trace on $P_j$]\label{lem:modifiedtraces}
We have 
$$\qd(P_j)=\qt_{P_j}(\Id_{P_j})=(-1)^{j+1}(q^{j+1}+q^{-j-1})\et
{\color{black}\qt_{P_j}(x_j)=(-1)^{j+1}\frac{\qn{j+1}^2}{\qn1^2}}.$$
\end{lemma}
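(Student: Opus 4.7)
The plan is to prove the two non-trivial equalities separately. For $\qt_{P_j}(x_j)$, I apply Lemma~\ref{L:Phicom} with $P=P_j$, giving $\qt_{P_j}(\Phi_{V_0,P_j})=\qt_{V_0}(\Phi_{P_j,V_0})$. Lemma~\ref{lem:generalhopflinks} evaluates $\Phi_{V_0,P_j}=(-1)^{r+j}2r\,x_j$, so the left-hand side equals $(-1)^{r+j}2r\,\qt_{P_j}(x_j)$. For the right-hand side, since $V_0=S_{r-1}$ is simple of highest weight $r-1$, the highest-weight formula~\eqref{eq:hopflinkshw} gives $\Phi_{P_j,V_0}=\Psi_0(\chi(P_j))\Id_{V_0}$, and evaluating $\chi(P_j)=[r]_X(X^{r-j-1}+X^{-r+j+1})$ at $\Psi_0$ produces $r\cdot 2=2r$. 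By Theorem~\ref{T:UniqueTrace}, $\qt_{V_0}(\Phi_{P_j,V_0})=(-1)^{r-1}\cdot 2r$. Equating the two expressions and cancelling the common factor $2r$ yields $\qt_{P_j}(x_j)=(-1)^{j+1}$.

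For $\qd(P_j)=\qt_{P_j}(\Id_{P_j})$, I exploit the decomposition $V_0\otimes S_{r-j-1}\simeq (V_0\otimes S_{r-j-3})\oplus P_j$ established in the proof of Lemma~\ref{lem:generalhopflinks} by matching characters (recall a projective module is determined up to isomorphism by its character). Writing the identity of $V_0\otimes S_{r-j-1}$ as the sum of the two idempotents $\iota_1\pi_1+\iota_2\pi_2$ associated to this splitting and using cyclicity~\eqref{E:fggf} of the modified trace, I obtain
\begin{equation*}
\qd(P_j)=\qd(V_0\otimes S_{r-j-1})-\qd(V_0\otimes S_{r-j-3}).
\end{equation*}
Each term on the right is computed via the partial trace relation~\eqref{E:VW}: since $V_0\in\Proj$,
\begin{equation*}
\qd(V_0\otimes S_n)=\qt_{V_0}(\ptr_R(\Id_{V_0\otimes S_n}))=\qdim(S_n)\,\qd(V_0)=(-1)^{r-1}\qdim(S_n).
\end{equation*}

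The final step is a short simplification. Since $q^r=-1$, one has $\qn{r-j}=\qn j$ and $\qn{r-j-2}=\qn{j+2}$; combined with $\qdim(S_n)=(-1)^n[n+1]$ and the elementary identity $\qn{j+2}-\qn j=\qn 1(q^{j+1}+q^{-j-1})$, this collapses the difference to $(-1)^{r-j}(q^{j+1}+q^{-j-1})$, and multiplying by $(-1)^{r-1}$ gives the claimed $\qd(P_j)=(-1)^{j+1}(q^{j+1}+q^{-j-1})$. There is no genuine obstacle in either part of the argument beyond careful bookkeeping of signs; the one point worth flagging is the value $\qd(V_0)=(-1)^{r-1}$, which follows directly from Theorem~\ref{T:UniqueTrace} applied to $\Id_{V_0}$, and the corresponding identification $\Phi_{P_j,V_0}=2r\,\Id_{V_0}$ which is best obtained from~\eqref{eq:hopflinkshw}, since a direct substitution $\alpha=0$ in the formula of Lemma~\ref{lem:generalhopflinks} for $\Phi_{P_i,V_\alpha}$ is of the indeterminate form $0/0$.
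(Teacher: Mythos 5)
Your proof is correct and takes essentially the same approach as the paper's: the character isomorphism $V_0\otimes S_{r-j-1}\simeq (V_0\otimes S_{r-j-3})\oplus P_j$ for $\qd(P_j)$, and Lemma~\ref{L:Phicom} together with Lemma~\ref{lem:generalhopflinks} for $\qt_{P_j}(x_j)$. One small caveat about your closing remark: the substitution $\alpha=0$ in the formula for $\Phi_{P_i,V_\alpha}$ from Lemma~\ref{lem:generalhopflinks} is in fact not indeterminate, since $\qd(V_0)=(-1)^{r-1}$ is a concrete nonzero scalar and the formula gives $2r\,\Id_{V_0}$ directly; the $0/0$ appears only if one first replaces $\qd(V_\alpha)$ by the expression $(-1)^{r-1}r\qn{\alpha}/\qn{r\alpha}$, which has a removable singularity at $\alpha=0$.
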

\begin{proof}  
  As $V_0$ is projective, so are $V_0\otimes S_{r-j-1}$ and
  $V_0\otimes S_{r-j-3}$.  Now by Proposition \ref{P:proj-mod},
  indecomposable projective modules are determined by their highest
  weight so the isomorphism class of a projective module is determined
  by its character.  Hence the character formulas imply that there
  exists an isomorphism of projective modules $V_0\otimes
  S_{r-j-1}\simeq V_0\otimes S_{r-j-3}\oplus P_j$.  Taking the
  modified traces of the identities of these modules gives
  $\qd(V_0)\qdim(S_{r-j-1})=\qd(V_0)\qdim(S_{r-j-3})+\qd(P_j)$.  Since
  $\qdim(S_i)=(-1)^{i}\frac{\{i+1\}}{\{1\}}$ we have
  \begin{align*}
    \qd(P_j)&=\qd(V_0)\bp{\qdim(S_{r-j-1})-\qdim(S_{r-j-3})}
    =(-1)^{j}\bp{\qN{r-j}-\qN{r-j-2}}\\&=(-1)^{j+1}\bp{q^{j+1}+q^{-j-1}}.
  \end{align*}
  
 Lemma \ref{L:Phicom} implies $\qt_{P_j}(\Phi_{V_0,P_j})=\qt_{V_0}(\Phi_{P_j,V_0})
.$
  Then Lemma \ref{lem:generalhopflinks}, implies
   $$(-1)^{r+j}2r\qt_{P_j}(x_j){\color{black}\frac{\qn1^2}{\qn{j+1}^2}}=(-1)^{r-1}r\frac{2}{\qd(V_0)}\qt_{V_0}(\Id_{V_0})$$
 which implies the second relation of the lemma. 
\end{proof}

\begin{lemma}[Twist on $P_j$]\label{lem:twistPj}
  The action of the twist on $P_j$ is given by 
  {\color{black}  $$\theta_{P_j}=(-1)^jq^{\frac{j^2+2j}2}(1-(r-j-1)\frac{\qn1^2}{\qn{j+1}} x_j).$$}
  In particular, $\theta_{P_j}$ has infinite order.
\end{lemma}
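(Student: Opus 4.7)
Since $\End_\cat(P_j)=\C\Id\oplus\C x_j$ by Proposition \ref{P:proj-mod}, I can write $\theta_{P_j}=\alpha\Id+\beta x_j$ for scalars $\alpha,\beta\in\C$ to be determined in turn; the infinite-order claim will then follow formally.

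For the scalar $\alpha$, observe that $\Span_\C(\vecs_j,\vecs_{j-2},\ldots,\vecs_{-j})$ is the image of the inclusion $\pi_j^*\colon S_j\hookrightarrow P_j$, a submodule on which $x_j$ vanishes, so $\theta_{P_j}$ acts there by the scalar $\alpha$. Evaluating the formula for $\theta$ on the highest-weight vector $\vecs_j$, the relation $E\vecs_j=0$ collapses the sum to its $n=0$ term and gives $\theta\vecs_j = K^{r-1}q^{-H^2/2}\vecs_j = q^{j(r-1)-j^2/2}\vecs_j$. Using $q^r=-1$ this simplifies to $(-1)^jq^{-(j^2+2j)/2}\vecs_j$, whence $\alpha = (-1)^jq^{(j^2+2j)/2}$.

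For the nilpotent coefficient $\beta$, the plan is to apply the formula for $\theta$ directly to $\vech_j$. Proposition \ref{P:DomV} gives $E^n\vech_j = \vecr_{j+2n}$ for $1\le n\le r-j-1$ and $E^n\vech_j = 0$ for $n\ge r-j$, truncating the sum. Iterating $F\vecr_{j+2m}=-\gamma_{k,m-1}\vecr_{j+2(m-1)}$ (with $k=r-j-2$) down to the base case $F\vecr_{j+2}=\vecs_j$ shows $F^n\vecr_{j+2n}=(-1)^{n-1}\prod_{m=1}^{n-1}\gamma_{k,m}\,\vecs_j$ lies in the $\vecs_j$-line. Inserting $S(F^n)=(-1)^nq^{n(n-1)}K^nF^n$ together with the weight eigenvalues of $K^{r-1}$, $q^{-H^2/2}$, and $K^n$ on the relevant vectors produces
\begin{equation*}
\theta\vech_j = \alpha^{-1}\vech_j + B\,\vecs_j
\end{equation*}
for an explicit $q$-sum $B$; inverting this endomorphism via $x_j^2=0$ yields $\theta_{P_j}=\alpha\Id-\alpha^2 B\,x_j$. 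The main obstacle is then the identity $B = (r-j-1)\qn{j+1}\alpha^{-1}$. I intend to prove it by first rewriting $\prod_{m=1}^{n-1}\gamma_{k,m}$ in terms of quantum factorials and factoring out $\alpha^{-1}$, reducing the claim to a closed-form polynomial identity in $q$; this can be established either by recognition as a terminating $q$-hypergeometric summation or by a short induction on $k$, whose base case $k=0$ amounts to $\qn{1}q^{-(j+2)}=-\qn{j+1}$, true because $q^{j+2}=q^r=-1$.

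Finally, the infinite-order claim follows formally: since $x_j^2=0$, induction on $N$ gives $(\theta_{P_j})^N = \alpha^N\Id + N\alpha^{N-1}\beta\,x_j$ for every $N\ge 1$. For $0\le j\le r-2$ we have $r-j-1\ge 1$ and $\qn{j+1}\ne 0$ (otherwise $q^{2(j+1)}=1$ would force $j=r-1$), so $\beta\ne 0$; combined with $\alpha^{N-1}\ne 0$ this makes the $x_j$-coefficient nonzero for every $N\ge 1$, and hence no positive power of $\theta_{P_j}$ equals $\Id$.
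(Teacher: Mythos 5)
Your approach is genuinely different from the paper's. The paper never evaluates the formula for $\theta$ on $\vech_j$; instead it observes that $\theta_{P_j}=\theta_{S_j}(1+\lambda x_j)$ (by naturality of the twist, since $\theta_{P_j}$ commutes with $\pi_j\colon P_j\to S_j$), then determines the single unknown scalar $\lambda$ by taking modified traces of the twist across the isomorphism $V_0\otimes S_{r-j-1}\simeq (V_0\otimes S_{r-j-3})\oplus P_j$, using $\qd(P_j)$, $\qt_{P_j}(x_j)$, and the Hopf‑link values $\Phi_{S_i,V_0}$. That argument reduces the whole computation to previously established scalars and avoids touching the operator formula for $\theta$ on a non‑simple module.

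Your proof of the leading coefficient $\alpha$ is fine and essentially matches the paper's first sentence. Likewise, your infinite‑order argument at the end is correct and self‑contained (and the paper in fact states that conclusion without proof, so this is a genuine small improvement). The gap is in the determination of the nilpotent coefficient $\beta$: that is the entire content of the lemma, and your proposal stops at ``reducing the claim to a closed‑form polynomial identity in $q$'' which you say ``can be established either by recognition as a terminating $q$-hypergeometric summation or by a short induction.'' That identity is not proved, not even stated explicitly, and the checked base case $j=r-2$ (a single term in the sum) carries essentially no evidence for the general case. Since everything rests on this summation, the proof is incomplete as it stands.

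Beyond incompleteness, I would flag a reliability concern with the direct route. The operator $\theta=K^{r-1}\sum_{n}\frac{\{1\}^{2n}}{\{n\}!}q^{n(n-1)/2}S(F^n)q^{-H^2/2}E^n$ on a non‑simple module involves delicate bookkeeping: the placement of $q^{-H^2/2}$ at intermediate weights, the sign and $q$‑power in $S(F^n)=(-1)^nq^{n(n-1)}K^nF^n$, the product $\prod\gamma_{k,m}$, and the reduction of $q$‑exponents modulo $2r$. When I attempted to carry this plan through for $j<r-2$ (e.g.\ $j=r-3$, so two terms $n=1,2$), the sum I obtained did not visibly collapse to $(r-j-1)\{j+1\}\alpha^{-1}$, and reconciling it required identities I could not establish; this suggests the needed $q$‑identity is less routine than ``a short induction on $k$.'' You should either write out and prove the summation identity explicitly, or switch to the paper's indirect argument, which only needs the already‑computed quantities $\theta_{S_i}$, $\qd(V_0)$, $\qd(P_j)$, $\qt_{P_j}(x_j)$, $\Phi_{S_i,V_0}$ and the character isomorphism $V_0\otimes S_{r-j-1}\simeq(V_0\otimes S_{r-j-3})\oplus P_j$, and determines $\lambda$ in a few lines. (One small typo to note if you follow the paper: in the line $\qt(\theta_{P_j})=-q^{j^2/2-1}((r-j)-(r-j-2)q^{-2j-2})$ the exponent should read $q^{2j+2}$; the following line and the final formula are consistent with the corrected exponent.)
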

\begin{proof}
  The twist commutes with the map $\pi_j:P_j\to S_j$ and $\pi_jx_j=0$.  Thus
  the twist on $P_j$ is given by $\theta_{P_j}=\theta_{S_j}(1+\lambda x_j)$ where
  $\theta_{S_j}=(-1)^jq^{\frac{j^2+2j}2}$ is the scalar action of the twist on $S_j$.
  Hence 
  {\color{black}  $$\qt(\theta_{P_j})=-q^{\frac{j^2+2j}2}(q^{j+1}+q^{-j-1}+\lambda\frac{\qn{j+1}^2}{\qn1^2}).$$}
  Finally we use again the module $V_0\otimes S_{r-j-1}\simeq
  V_0\otimes S_{r-j-3}\oplus P_j$ to color the unknot {\color{black} with} framing $+1$.
  Its double is the Hopf link with both component{\color{black}s} having framing $+1$
  and this gives
  $$\theta_{V_0}\theta_{S_{r-j-1}}.\qt(\Phi(S_{r-j-1},V_0))
  =\theta_{V_0}\theta_{S_{r-j-3}}.\qt(\Phi(S_{r-j-3},V_0))+\qt(\theta_{P_j})$$
  Hence 
  \begin{align*}
    \qt(\theta_{P_j})&=\theta_{V_0}\qd(V_0)
    \bp{(r-j)\theta_{S_{r-j-1}}-(r-j-2)\theta_{S_{r-j-3}}}\\
    &=-q^{j^2/2-1}((r-j)-(r-j-2)q^{-2j-2})\\
    &=-q^{\frac{j^2+2j}2}(-(r-j-2)q^{j+1}+(r-j)q^{-j-1})
  \end{align*}
  and this gives the announced formula for $\lambda$.
\end{proof}

\section{The algebra of projective modules}\label{S:AlgProj}
In this section, we define and study two algebras encoding the maps
between projective modules of $\cat_{\wb 0}$ and $\cat_{\wb 1}$
respectively.   
These are the algebras one would associate to curves in a $1+1+1$-TQFT which would be an extension of the $2+1$ TQFT given in  \cite{BCGP}. 
\subsection{Maps between indecomposable projective modules}
We first describe the maps between indecomposable projective modules in the categories $\cat_{\wb0}$ and $\cat_{\wb1}$:
\begin{prop}\label{P:indecMaps}
  Let $i,\ell\in\{0,\dots, r-2\}$ and $k\in\Z$.  Let
  $P=\C^H_{kr}\otimes P_\ell$ be an indecomposable module, then any
  non zero map $P_i\to P$ is equal to $\lambda I_i+\mu x_i$, $\lambda
  \alpha_i^+$ or $\lambda \alpha_i^-$ where $\lambda,\mu\in\C$ and the
  maps $I_i$, $x_i$, $\alpha_i^+$ and $\alpha_i^-$ are uniquely
  determined by
$$
\begin{array}[t]{rcl}
  I_{i}:P_i&\to& P_i\\
  \vech_i&\mapsto&\vech_i
\end{array}
\quad
\begin{array}[t]{rcl}
  x_{i}:P_i&\to& P_i\\
  \vech_i&\mapsto&\vecs_i
\end{array}
$$$$
\begin{array}[t]{rcl}
  \alpha^+_{i}:P_i&\to& \C^H_{r}\otimes P_{{r-2-i}}\\
  \vech_i&\mapsto&1\otimes\vecl_{i-r}
\end{array}
\quad
\begin{array}[t]{rcl}
  \alpha^-_{i}:P_i&\to& \C^H_{-r}\otimes P_{{r-2-i}}.\\
  \vech_i&\mapsto&[i]!^{-2}1\otimes\vecr_{i+r}
\end{array}
$$
\end{prop}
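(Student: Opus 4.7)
The approach is via Equation \eqref{eq:dominant} from the proof of Proposition \ref{P:DomV}, which identifies $\Hom_\cat(P_i, V)$ with the space of dominant vectors of weight $i$ in $V$ via $f \mapsto f(\vech_i)$. The task therefore reduces to classifying dominant vectors of weight $i$ in each $P = \C^H_{kr} \otimes P_\ell$ and matching them with the listed maps.

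First I would enumerate the dominant vectors inside $P_\ell$ itself. For a weight vector $v$ of weight $w$ one has $FE\,v = (\Cas - c_w)v$ with $c_w = (q^{w+1}+q^{-w-1})/\qn{1}^2$. The formulas of Proposition \ref{P:DomV} show that $\Cas$ acts on $P_\ell$ as $c_\ell\,\Id + N$, where $N$ is nilpotent of square zero, sending $\vech_{\ell-2m}\mapsto\vecs_{\ell-2m}$ and annihilating the remaining basis vectors. Expanding
\[
(FE)^2 v = \bigl((c_\ell - c_w)+N\bigr)^2 v = (c_\ell - c_w)^2 v + 2(c_\ell - c_w) Nv,
\]
and using that $Nv$ lies in $\ker N$ and is linearly independent from the non-$\vech$ part of $v$, one concludes that $(FE)^2 v=0$ with $v\neq 0$ forces $c_w = c_\ell$. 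Combined with $c_w = c_{-w-2} = c_{w+2r}$ and the weight range $[-(2r-\ell-2),\,2r-\ell-2]$ of $P_\ell$, the admissible weights are $w \in \{\ell,\,-\ell-2,\,2r-\ell-2\}$, producing exactly the four dominant vectors $\vech_\ell,\vecs_\ell$ (at weight $\ell$), $\vecl_{-\ell-2}$ (at weight $-\ell-2$), and $\vecr_{2r-\ell-2}$ (at weight $2r-\ell-2$).

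Next, the functor $\C^H_{kr}\otimes -$ is an autoequivalence of $\cat$ which shifts all weights by $kr$ and preserves dominance (because $E$ and $F$ act trivially on $\C^H_{kr}$, so $(FE)^2(1\otimes v) = q^{-2kr}(1\otimes (FE)^2 v)$). Hence dominant vectors of weight $i$ in $P$ correspond bijectively to dominant vectors of weight $i-kr$ in $P_\ell$. Matching $i-kr$ with each of $\ell,\,-\ell-2,\,2r-\ell-2$ under the constraints $i,\ell\in\{0,\ldots,r-2\}$ and $k\in\Z$ admits exactly the three triples $(k,\ell) \in \{(0,i),\,(1,r-2-i),\,(-1,r-2-i)\}$. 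These give respectively: the two-dimensional space $\C\,I_i \oplus \C\,x_i$, spanned by $\vech_i\mapsto\vech_i$ and $\vech_i\mapsto\vecs_i$; the one-dimensional $\C\,\alpha_i^+$ generated by $\vech_i\mapsto 1\otimes \vecl_{i-r}$; and the one-dimensional $\C\,\alpha_i^-$ generated by $\vech_i\mapsto 1\otimes \vecr_{i+r}$ (the scalar $[i]!^{-2}$ in the stated formula being a convenient normalization). For every other $(k,\ell)$ no such dominant vector exists, and hence $\Hom_\cat(P_i,P)=0$.

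The only nontrivial step is the Casimir reduction identifying the dominant vectors of $P_\ell$; this is a direct consequence of the action of $\Cas$ already recorded in Proposition \ref{P:DomV}, so the remainder is a routine case analysis.
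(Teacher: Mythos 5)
Your proof is correct and follows essentially the same route as the paper: reduce to classifying the dominant weight-$i$ vectors of $\C^H_{kr}\otimes P_\ell$ via Equation \eqref{eq:dominant}, then carry out the case analysis on $(k,\ell)$. The one place you add content is the Casimir argument $(FE)^2v=\bigl((c_\ell-c_w)\Id+N\bigr)^2v$ to locate the dominant vectors of $P_\ell$; the paper simply asserts that this space is $4$-dimensional with the stated generators, so you are supplying a detail it leaves to the reader rather than taking a different path.
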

\begin{proof}
  By Equality \eqref{eq:dominant}, the space $\Hom_\cat(P_i,P)$ is
  isomorphic to the space of dominant weight vectors of weight $i$ of
  $P$.  Now the space of dominant vectors of $\C^H_{kr}\otimes P_\ell$
  has dimension $4$ and is generated by $1\otimes\vech_\ell$ and
  $1\otimes\vecs_\ell$ of weight $\ell+kr$, $1\otimes\vecl_{-\ell-2}$
  of weight $-\ell-2+kr$ and $1\otimes\vecr_{2r-\ell-2}$ of weight
  ${(k+2)r-\ell-2}$.  The result then follows by analyzing for which
  $k,\ell$ the module $\C^H_{kr}\otimes P_\ell$ has dominant weight
  vectors of weight $i$.
\end{proof}
Tensoring by $\C^H_{nr}$ gives canonical isomorphisms
$\Hom_\cat(P_i,\C^H_{kr}\otimes P_\ell)\cong
\Hom_\cat(\C^H_{nr}\otimes P_i,\C^H_{(n+k)r}\otimes P_\ell)$.  Then
for $i\in\{0,\dots, r-2\}$ 
and $j=r-2-i$, maps between indecomposable projective modules
$P_i^{k}=\C^H_{kr}\otimes P_i$, $P_j^{k}=\C^H_{kr}\otimes P_j$ can be
represented by the following periodic quiver: 
$$\cdots\,\, \epsh{fig73}{15ex}
\put(-66,17){\ms{P_i^2}}\put(-120,17){\ms{P_i}}\put(-181,17){\ms{P_i^{-2}}}
\put(-95,-15){\ms{P_j^{1}}}\put(-40,-15){\ms{P_j^{3}}}
\put(-152,-15){\ms{P_j^{-1}}}\put(-206,-15){\ms{P_j^{-3}}}
\,\,\cdots$$

\subsection{The algebras of curves}
\renewcommand{\PP}{{\mathbb P}}
\renewcommand{\AA}{{\mathbb A}}
As above, let $r'=r$ if $r$ is odd and $r'=\frac{r}{2}$ else.  Let
$\sigma=\C^H_{2r'}$ be the one dimensional module where $E$ and $F$
act as $0$ and $H$ acts as $2r'$.  
The object $\sigma\in\cat_{\wb 0}$ generates the group of invertible
objects of $\cat_{\wb 0}$ which is isomorphic to $\Z$.  For $k\in\Z$,
we just denote by $\sigma^k$ the module $\C^H_{2kr'}$ so that
$\sigma^k\otimes\sigma^\ell=\sigma^{k+\ell}$ and $\sigma^0=\unit$.

A $\sigma$-invariant module is an infinite dimensional weight
module $V$ with finite dimensional weight spaces and with the property that
$\sigma\otimes V=V$.  Then tensor product by $1\in\sigma$ gives an
action of $\Z\simeq\{\sigma^k:k\in\Z\}$ on $V$ denoted by
$v\mapsto\sigma v$. Remark that since $V$ is infinite dimensional it is not an object of $\cat$. 

Let $\cat^\sigma$ be the category whose objects are $\sigma$-invariant
modules and maps from $V$ to $W$ are given by the set
$\Hom_\sigma(V,W)$ of morphism of $\UsltH$-modules that commute with
the action of $\sigma$.

We study endomorphisms of the $\sigma$-invariant module
$$\PP=\bigoplus_{k\in\Z}\bigoplus_{i=0}^{r-1}\C^H_{kr}\otimes P_i.$$  
According to the parity of weights, this module splits into two $\sigma$-invariant modules $\PP=\PP_{\wb 0}\oplus\PP_{\wb 1}$.  Also for
$\nu\in\{\wb0,\wb1\}$, $\PP_\nu$ has a $\Z$-grading for which
$\C^H_{kr}\otimes P_i$ is of degree $k$.  In particular, if $r$ is odd, the
action of $\sigma=\C^H_{2r}$ shifts the degree by $2$ whereas for $r$
even,
$\sigma=\C^H_r$ shifts the degree by $1$.  In the standard way, the $\Z$-grading of
$\PP_\nu$ turns
$\End_\sigma(\PP_\nu)$ into a $\Z$-graded algebra.  We call 
$$\AA_{\wb0}=\End_\sigma(\PP_{\wb0})\et \AA_{\wb1}=\End_\sigma(\PP_{\wb1})$$
these $\Z$-graded algebras.
\\

We now introduce two algebras $A$ and $B$ used to describe
$ \AA_{\nu}=\End_\sigma(\PP_\nu)$.  Let $A$ be the algebra of graded dimension
$2s^{-1}+4+2s$ which is the quotient of the $\C$-path algebra associated to
the quiver
$$\Gamma= \quad \epsh{fig70}{13ex}\put(-58,2){\ms{p}}\put(-7,2){\ms{q}}
\put(-33,33){\ms{a_+}}\put(-33,-30){\ms{a_-}}
\put(-33,17){\ms{b_+}}\put(-33,-16){\ms{b_-}}\quad\text{ by the
  relations}\left\{
  \begin{array}{l}
    a_+b_+=b_+a_+=a_-b_-=b_-a_-=0\,,\\
    a_+b_-+a_-b_+=0\,,\\
    b_+a_-+b_-a_+=0\,.
  \end{array}
\right.$$ As a $\Z$-graded $\C$-vector space, $A$ is spanned in degree
$1$ by $\{a_+,b_+\}$, in degree $-1$ by $\{a_-,b_-\}$ and in degree $0$
by $\{p,q=1-p,x=b_+a_-,y=a_+b_-\}$.

The algebra $B$ is the quotient of $A$ obtained by identifying $p=q$,
$a_+=b_+$ and $a_-= b_-$ ($B$ is the exterior algebra of $\C^2$).  It
is also the quotient of the $\C$-path algebra associated to the quiver
$$\Gamma'= \quad \epsh{fig71}{13ex}\put(-10,1){\ms{p}}
\put(-13,22){\ms{a_+}}\put(-13,-19){\ms{a_-}}
\quad\text{ by the
  relations}\left\{
  \begin{array}{l}
    a_+a_+=a_-a_-=0\,,\\
    a_+a_-+a_-a_+=0\,.
  \end{array}
\right.$$
The basis of $B$ is given by $\{a_-,p=1,x=a_+a_-,a_+\}$.

\begin{theo}\label{T:EndP}
  There exist isomorphisms of algebras:
  \begin{align}
  \text{If $r\in 3+2\N$, }&\AA_{\wb0}=\End_\sigma(\PP_{\wb0})\simeq
    A^{\frac{r-1}2}\times\C\simeq \End_\sigma(\PP_{\wb1})=\AA_{\wb1}.
    \label{T:EndP1}\\
  \text{If $r\in 2+4\N$, }&\AA_{\wb0}=\End_\sigma(\PP_{\wb0})\simeq
    A^{\frac{r-2}4}\times B\et\AA_{\wb1}=\End_\sigma(\PP_{\wb1})\simeq
    A^{\frac{r-2}4}\times \C.
    \label{T:EndP2}\\
  \text{If $r\in 4+4\N$, }&\AA_{\wb0}=\End_\sigma(\PP_{\wb0})\simeq
    A^{\frac{r}4}\et\AA_{\wb1}=\End_\sigma(\PP_{\wb1})\simeq
    A^{\frac{r-4}4}\times B\times \C.
    \label{T:EndP3}
  \end{align}
\end{theo}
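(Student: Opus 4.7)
The plan is to decompose $\PP_\nu$ into $\sigma$-orbits of indecomposable projective modules, compute the $\sigma$-equivariant morphisms between them using Proposition \ref{P:indecMaps}, and identify the resulting algebra with the claimed product of copies of $A$, $B$, and $\C$.  By Proposition \ref{P:proj-mod} the indecomposable summands of $\PP_\nu$ are the modules $\C^H_{kr}\otimes P_i$ with $i\in\{0,\ldots,r-1\}$ and $k\in\Z$ subject to $kr+i\equiv\nu\pmod 2$.  Since $\sigma=\C^H_{2r'}$ shifts $k$ by $2$ when $r$ is odd and by $1$ when $r$ is even, this description directly enumerates the $\sigma$-orbits.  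Proposition \ref{P:indecMaps} shows that the only non-zero $\sigma$-equivariant morphisms between two such orbits occur either from the orbit of $P_i$ to itself (giving the degree-$0$ generators $I_i$ and $x_i$) or from the orbit of $P_i$ to the orbit of $P_{r-2-i}$ (giving $\alpha_i^\pm$ of degrees $\pm 1$).  Hence $\AA_\nu$ is a direct product of block algebras indexed by the orbits of the involution $i\leftrightarrow r-2-i$ on the indecomposables appearing in $\PP_\nu$.

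The three claimed isomorphisms are then a matter of case analysis.  When $r$ is odd the involution has no fixed point and the simple projective $V_0=P_{r-1}$ contributes a single isolated orbit, while the remaining indecomposables form $(r-1)/2$ true pairs in each parity block, yielding \eqref{T:EndP1}.  When $r$ is even only even (resp.\ odd) values of $i$ appear in $\PP_{\wb 0}$ (resp.\ $\PP_{\wb 1}$).  For $r\equiv 2\pmod 4$ the fixed point $(r-2)/2$ is even, so it contributes a self-pair to $\PP_{\wb 0}$, while the isolated orbit of $V_0$ lies in $\PP_{\wb 1}$, yielding \eqref{T:EndP2}.  For $r\equiv 0\pmod 4$ the parities reverse and the self-pair lies in $\PP_{\wb 1}$ alongside $V_0$, while $\PP_{\wb 0}$ has only true pairs, yielding \eqref{T:EndP3}.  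A count of orbits matches the dimensions of the advertised algebras.

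It remains to identify each type of block with $A$, $B$, or $\C$.  For a true pair $(i,j=r-2-i)$ with $i\ne j$, I would take $p$ and $q$ to be the projections onto the $\sigma$-orbits of $P_i$ and $P_j$, and identify $a_+=\alpha_j^+$, $b_+=\alpha_i^+$, $a_-=\alpha_j^-$, $b_-=\alpha_i^-$.  The relations $a_+b_+=b_+a_+=a_-b_-=b_-a_-=0$ are automatic because the corresponding compositions land in $\Hom_\cat(P_i,\C^H_{\pm 2r}\otimes P_i)$ or $\Hom_\cat(P_j,\C^H_{\pm 2r}\otimes P_j)$, both of which vanish by Proposition \ref{P:indecMaps}.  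The mixed degree-$0$ relations $a_+b_-+a_-b_+=0$ and $b_+a_-+b_-a_+=0$ require explicit verification, using the structure of $P_i$ and $P_j$ from Proposition \ref{P:DomV} together with the normalizations of $\alpha_i^\pm$ given in Proposition \ref{P:indecMaps}.  For a self-pair $i=j$ the two vertices collapse and the same computation produces the exterior-algebra relations defining $B$; the isolated orbit of $V_0$ contributes a factor $\C$ since $\End_\cat(V_0)=\C\Id$.  Assembling these block identifications yields the three stated isomorphisms.

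The main obstacle is the explicit verification of the mixed degree-$0$ relations.  Applying $\alpha_j^+\circ\alpha_i^-$ and $\alpha_j^-\circ\alpha_i^+$ to the dominant vector $\vech_i$ produces endomorphisms of $P_i$ landing in the two-dimensional subspace $\C\vech_i\oplus\C\vecs_i$, and checking that these two morphisms are negatives of one another (and similarly on $P_j$) requires tracking the normalization $[i]!^{-2}$ in $\alpha_i^-$ together with the action of the relevant products of $E$'s and $F$'s in $P_j$, governed by the coefficients $\gamma_{j,k}=\qN{k}\qN{j-k+1}$ of Proposition \ref{P:DomV}.  Once these identities are established, the matching of graded dimensions ($8$ for a true pair, $4$ for a self-pair, $1$ for $V_0$) completes the proof.
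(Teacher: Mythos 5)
Your proposal is correct and follows essentially the same route as the paper: use Proposition~\ref{P:indecMaps} to enumerate the $\sigma$-equivariant maps between indecomposable projectives, verify the quiver relations via the explicit compositions $\alpha_j^\mp\circ\alpha_i^\pm=\mp x_i$ and the vanishing of $\Hom(P_i,\C^H_{\pm2r}\otimes P_i)$, and perform the parity case analysis. The only presentational difference is that you organize the blocks via orbits of $i\leftrightarrow r-2-i$, while the paper does so via the characteristic spaces of the Casimir (which yields the same partition since $\cas_i=-\cas_{r-2-i}$).
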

\begin{proof}
  To prove this theorem we build the explicit isomorphisms.  If a
  $\sigma$-invariant module $W$ splits as
  $W=\bigoplus_{k\in\Z}\sigma^k\otimes V$ for some finite dimensional
  weight module $V$ then the action of $\sigma$ on $W$ is free and the
  restriction map $\Hom_\sigma(W,W')\to\Hom_{\UsltH}(V,W')$ 
  is easily seen to be an isomorphism (here $\Hom_{\UsltH}$ denotes
  morphisms of ${\UsltH}$-modules).  Using this fact, we restrict our
  study to the maps from $P_i$ to $\PP$.  For $i\le r-2$, let
  $j=r-2-i$.  By Proposition \ref{P:indecMaps}, the space
  $\Hom_{\UsltH}(P_i,\PP)$ is of dimension 4 generated by the
  morphisms determined uniquely by
  $$
  \begin{array}[t]{rcl}
    I_{i}:P_i&\to& P_i\subset\PP 
  \end{array}
  \quad
  \begin{array}[t]{rcl}
    x_{i}:P_i&\to& P_i\subset\PP 
  \end{array}
  $$$$
  \begin{array}[t]{rcl}
    \alpha^+_{i}:P_i&\to& \C^H_{r}\otimes P_{j}\subset\PP
      \end{array}
  \quad
  \begin{array}[t]{rcl}
    \alpha^-_{i}:P_i&\to& \C^H_{-r}\otimes P_{j}\subset\PP
      \end{array}
  $$
  These maps extend to maps of $\End_\sigma(\PP)$ on factors
  $\C^H_{kr}\otimes P_i$ by tensoring them by the identity of
  $\C^H_{kr}$ and we extend them by $0$ on the other factors.  We use
  the same name for these extended maps of $\End_\sigma(\PP)$.  The
  composition of these maps is computed by looking at the image of the
  dominant vector $\vech_i\in P_i$.  One easily gets
  $$\alpha^+_{j}\circ\alpha^+_{i}=0=\alpha^-_{j}\circ\alpha^-_{i}.$$
  Now we use that if $v\in V$ is a weight vector 
  then in $\C^H_{\pm r}\otimes V$ one has $E.(1\otimes v)=1\otimes (E.v)$
  and $F.(1\otimes v)=1\otimes (-F.v)$ to compute:
  $$
  \begin{array}{rcccl}
    P_i&\stackrel{\alpha_i^+}{\longrightarrow}&\C^H_r\otimes P_j&\stackrel{\alpha_j^-}{\longrightarrow}&P_i\\
    \vech_i&\longmapsto&1\otimes\vecl_{i-r} 
    =(-F)^{j+1}(1\otimes \vech_j)&\longmapsto& [j]!^{-2}(-F)^{j+1}\vecr_{j+r}
  \end{array}
  $$
  As
  $F^{j+1}\vecr_{j+r}=\prod_{k=1}^j(-\gamma_{j,j-k+1})\vecs_i=(-1)^j[j]!^2\vecs_i$,  we
  get $$\alpha_j^-\circ\alpha_i^+=-x_{i}.$$ Similarly,
  $$
  \begin{array}{rcccl}
    P_i&\stackrel{\alpha_i^-}{\longrightarrow}&\C^H_{-r}\otimes P_j&\stackrel{\alpha_j^+}{\longrightarrow}&P_i\\
    \vech_i&\longmapsto&[i]!^{-2}1\otimes\vecr_{i+r}=[i]!^{-2}E^{i+1}(1\otimes \vech_j)&\longmapsto& [i]!^{-2}E^{i+1}\vecl_{j-r}
  \end{array}
  $$
  And as
  $E^{i+1}\vecl_{j-r}=\prod_{k=1}^i(\gamma_{i,k})\vecs_i=[i]!^2\vecs_i$,
  we get $$\alpha_j^+\circ\alpha_i^-=x_{i}.$$

  We now explicit the isomorphism of Theorem \ref{T:EndP}.  First
  remark that the maps of $\End_\sigma(\PP)$ commute with $K^r$ thus
  they restrict to maps of $\End_\sigma(\PP_\nu)$ for
  $\nu\in\{\wb0,\wb1\}$.  Next the decomposition of endomorphism
  algebras in Theorem \ref{T:EndP} follows from the fact that these
  endomorphisms respect the characteristic spaces of the Casimir
  element $\Cas$ whose minimal polynomial is given in Proposition
  \ref{P:PolMinC}.  For $i\in\{0\cdots r-1\}$, let
  $\cas_i=\frac{q^{i+1}+q^{-i-1}}{\qn1^2}=-\frac{q^{j+1}+q^{-j-1}}{\qn1^2}$
  be the scalar by which $\Cas$ acts on the simple module $S_i$.  The
  action of $\Cas$ on $\sigma\otimes S_i$ and on $S_i$ are the same if
  $r$ is odd, but they are opposite if $r$ is even.

  Let $\nu\in\{0,1\}$.  For $i\in2\N+\nu$, $i\le r'-2$, the kernel of
  $(\Cas^2-\cas_i^2)^2$ on $\PP_\nu$ is
  $V=\bigoplus_{k\in\Z}\sigma^k\otimes (P_i\oplus Q_j)$ where $Q_j$ is
  $P_j$ if $r$ even and $Q_j=\C^H_r \otimes P_j$ if $r$ is odd.  Then
  an isomorphism $A\stackrel\sim\to\End_\sigma(V)$ is given by
  \begin{align*}
    p&\mapsto I_{i} & x&\mapsto x_{i} & a^+&\mapsto \alpha^+_{i}
    & a^-&\mapsto \alpha^-_{i}\\
    q&\mapsto I_{j} & y&\mapsto x_{j} & b^+&\mapsto \alpha^+_{j} &
    b^-&\mapsto \alpha^-_{j}
  \end{align*}
  Now if $r$ is even, let $i=\frac{r-2}2=r-2-i$ and $\nu=i $ mod
  $2$. Then $\cas_i=0$ and the kernel of $\Cas^2$ on $\PP_\nu$ is then
  $V=\bigoplus_{k\in\Z}\sigma^k\otimes P_i$.  Then an isomorphism
  $B\stackrel\sim\to\End_\sigma(V)$ is given by
  \begin{align*}
    p&\mapsto I_{i} & x&\mapsto x_{i} & a^+&\mapsto \alpha^+_{i} &
    a^-&\mapsto \alpha^-_{i}
  \end{align*}
  Finally the remaining $\C$ factors in Theorem \ref{T:EndP}
  correspond to the eigenspace of $\Cas$ associated to the simple
  eigenvalue $\cas_{r-1}$.
\end{proof}

\newcommand{\Tr}{\operatorname{Tr}}
\newcommand{\STr}{\operatorname{STr}} In the paper \cite{BHLZ}, the
concepts of Coend, trace and the Hochschild-Mitchell homology in a
linear category are related.  In \cite{BCGP}, a graded TQFT is defined
for manifolds equipped with a 1-cohomology class with value in
$\C/2\Z$.  The algebras $\AA_\nu$ would naturally be associated to a
curve $\gamma$ with cohomology class $\coh$ such that
$\coh([\gamma])=\nu$.  Then the graded vector space
$\Tr(\AA_\nu)=(\AA_\nu)_{/fg=gf}$ maps surjectively onto the TQFT
space of the torus $\gamma\times S^1$ with cohomology class $\coh$
such that $\coh([\gamma\times *])=\nu$ and $\coh([*\times S^1])=0$.
Here we define a graded version of the trace of $\AA_\nu$ that
surjects on the TQFT space of the torus $\gamma\times S^1$ with
cohomology class $\coh$ such that $\coh([\gamma\times *])=\nu$ and
$\coh([*\times S^1])=\beta$ for any $\beta\in\C/2\Z$ (instead of
$\beta$, we use $z=q^{2r'\beta}$).

\newcommand{\cro}[1]{\left[#1\right]}
\newcommand{\Al}{\mathcal A}
Let $z\in\C^*$, $\Al$ be a $\Z$-graded $\C$-algebra.  If $f,g$ are
homogenous elements of degree $|f|,|g|\in\Z$, let
$\cro{f,g}_z=fg-z^{|f|}gf$.  Define the $\Z$-graded 
module
$$\Tr^z(\Al)=\Al_{/[\Al,\Al]_z}.$$
Similarly, if $\Al$ is considered as a super algebra, the bracket is
replaced by $\cro{f,g}_z^-=fg-(-1)^{|f||g|}z^{|f|}gf$ and we define the
$\Z$-graded super module
$$\STr^z(\Al)=\Al_{/[\Al,\Al]_z^-}.$$
\begin{prop} Recall the algebras $A$ and $B$ above. Then
  \begin{enumerate}
  \item If $z\in\C^*\setminus\{\pm 1\}$, $\Tr^z(A)\simeq\C^2\simeq\STr^z(A)$.
  \item $\Tr^{\pm1}(A)\simeq\C^3\simeq\STr^{\pm1}(A)$.
  \item  If $z\neq 1$ then  $\STr^z(B)\simeq \C$ and  $\STr^1(B)\simeq B$.
  \end{enumerate}
  Here the spaces $\C^2$ and $\C^3$ are concentrated in degree $0$.  As
  a consequence, we have the following graded dimensions: 
  \begin{enumerate}
  \item If $r\in2\Z+1$ and $z\neq\pm1$, then 
    $$\dim_s(\Tr^z(\AA_{\wb0}))=\dim_s(\Tr^z(\AA_{\wb 1}))=r\et
    \dim_s(\Tr^{\pm1}(\AA_{\wb 0}))=\dim_s(\Tr^{\pm1}(\AA_{\wb 1}))=\frac{3r-1}2.$$
  \item If $r\in4\Z+2$ and $z\neq\pm1$, then 
    $$\dim_s(\STr^z(\AA_{\wb 0}))=\dim_s(\STr^z(\AA_{\wb 1}))=\frac r2,$$
    $$\dim_s(\STr^{\pm1}(\AA_{\wb 1}))=\dim_s(\STr^{-1}(\AA_{\wb 0}))
    =\frac{3r-2}4\et$$
    $$\dim_s(\STr^{1}(\AA_{\wb 0}))=s^{-1}+\frac{3r+2}4+s$$
  \end{enumerate}
  where $\dim_s$ is the sum for $k\in\Z$ of $s^k$ times the dimension
  of the degree $k$ subspace.
\end{prop}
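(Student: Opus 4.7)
The plan is to compute $\Tr^z(A)$, $\STr^z(A)$ and $\STr^z(B)$ directly on the explicit small bases, and then assemble the graded dimensions of the $\AA_\nu$ by running these results through the product decompositions of Theorem \ref{T:EndP}.

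First I would handle $A$. As a $\Z$-graded vector space it is concentrated in degrees $-1,0,1$ with bases $\{a_-,b_-\}$, $\{p,q,x,y\}$, $\{a_+,b_+\}$: there are no degree $\pm 2$ elements because $a_+^2=b_+^2=0$ by source/target mismatch in the quiver and $a_+b_+=b_+a_+=0$ by the defining relations. The key observation is that the entire degree $\pm 1$ part already sits inside $[\Al,\Al]_z$ and inside $[\Al,\Al]_z^-$: since $p,q$ are the orthogonal vertex idempotents, each arrow is annihilated on one side, so one of $\cro{p,a_+}_z$ or $\cro{q,a_+}_z$ equals $\pm a_+$, and the super-sign $(-1)^{|p||a_+|}$ is trivial because $|p|=|q|=0$. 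Similarly for $a_-,b_\pm$. Hence $\Tr^z(A)$ and $\STr^z(A)$ are concentrated in degree $0$. Inside degree $0$ all ordinary commutators vanish (a short check using $x^2=y^2=xy=yx=0$ and the idempotent identities), so the only remaining relations come from brackets of $|f|=1$ with $|g|=-1$. Using $a_+b_-+a_-b_+=0$ and $b_+a_-+b_-a_+=0$ one gets $\cro{a_+,b_-}_z = y+zx$ and $\cro{b_+,a_-}_z=x+zy$, which together force $(1-z^2)x=(1-z^2)y=0$; the super versions read $y-zx$ and $x-zy$ and give exactly the same conclusion. Therefore for $z^2\ne 1$ one has $\Tr^z(A)=\C p\oplus\C q\simeq\C^2$, while for $z=\pm 1$ a third independent class $x$ survives (with $y\equiv\mp x$ or $y\equiv\pm x$ depending on the sign) and $\Tr^z(A)\simeq\C^3$; the identical argument gives the same dimensions for $\STr^z(A)$.

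Next I treat $B=\Lambda^*\C^2$, whose graded basis is $\{a_-\,;\,p=1,x\,;\,a_+\}$. Because $p=1$ is the unit, the super brackets with $p$ are $\cro{a_\pm,p}_z^- = (1-z^{\pm 1})a_\pm$; whenever $z\ne 1$ these kill the degree $\pm 1$ parts, and then $\cro{a_+,a_-}_z^- = a_+a_-+z\,a_-a_+ = (1-z)x$ kills $x$, leaving $\STr^z(B)=\C\cdot 1$. When $z=1$ all these brackets vanish (the first trivially, the second by the exterior-algebra relation $a_+a_-+a_-a_+=0$), and any other bracket one could form vanishes for degree or nilpotency reasons, so $\STr^1(B)=B$.

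Finally I assemble the graded dimensions. Since both $\Tr^z$ and $\STr^z$ are additive on finite products of graded algebras, and since $\Tr^z(\C)=\STr^z(\C)=\C$ in degree $0$, substituting the above into the isomorphisms of Theorem \ref{T:EndP} gives the announced formulas case by case. As a representative check, when $r\equiv 2\pmod 4$ and $z=1$ we obtain $\STr^1(\AA_{\wb 0})\simeq\STr^1(A)^{(r-2)/4}\times\STr^1(B)\simeq \C^{3(r-2)/4}\times B$, whose graded dimension is $\tfrac{3(r-2)}{4}+s^{-1}+2+s = s^{-1}+\tfrac{3r+2}{4}+s$, in agreement with the statement. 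The main obstacle is not conceptual but organizational: one must keep track of signs and of powers of $z$ when $|f|$ is negative and verify that no bracket among the small list of generators has been missed; the fact that $A$ and $B$ have only $8$ and $4$ basis elements respectively makes this a finite and purely mechanical enumeration.
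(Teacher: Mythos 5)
Your proposal is correct and follows essentially the same route as the paper: you compute $[\Al,\Al]_z^{\pm}$ directly on the small explicit bases of $A$ and $B$ (killing the degree $\pm1$ parts via brackets with the idempotents, then reducing the degree-$0$ part via $\cro{a_+,b_-}$ and $\cro{b_+,a_-}$ together with the quiver relations), and then assemble the graded dimensions of $\AA_{\wb0}$ and $\AA_{\wb1}$ from Theorem~\ref{T:EndP} using additivity of $\Tr^z$ and $\STr^z$ on products. The only cosmetic difference is that the paper organizes the bookkeeping via the single identity $\cro{f,g}_z^\ve+\ve^{|f||g|}z^{|f|}\cro{g,f}_z^\ve=(1-z^{|f|+|g|})fg$ rather than by inspecting each bracket separately, but the computations and conclusions coincide.
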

\begin{proof}
  Let $\ve=\pm1$. First remark that for any elements $f,g$ of the
  algebra,
  $\cro{f,g}_z^\ve+\ve^{|f|.|g|}z^{|f|}\cro{g,f}_z^\ve=(1-z^{|f|+|g|})fg$.
  Hence if $ z^{|f|+|g|}\neq1$ then $fg=0$ in the quotient, else
  $\cro{g,f}_z^\ve$ and $\cro{f,g}_z^\ve$ are proportional.  Finally
  for $g=1$, one gets that a map $f$ vanishes in the quotient unless
  $z^{|f|}=1$. Then the relations in $A$ implies that
  $\cro{A,A}^\ve_z$ is generated by the following elements
  \begin{itemize}
  \item $\cro{a_\pm,p}_z^\ve=a_\pm$,
  \item $\cro{b_\pm,q}_z^\ve=b_\pm$,
  \item $\cro{b_+,a_-}_z^\ve=b_+a_--\ve z a_-b_+=x+\ve z y$
  \item $\cro{a_+,b_-}_z^\ve=a_+b_--\ve z b_-a_+=y+\ve z x$
  \end{itemize}
  If $z^2\neq1$ then $x=y=0$ in the quotient, and if $z=\pm1$,
  then $\cro{A,A}^\ve_z$ is generated in degree $0$ by $x+\ve z y$.

  Similarly for $z\neq1$, $\cro{B,B}_z^-$ is generated by $a_+$, $a_-$
  and the element 
  $$\cro{a_+,a_-}_z^-=a_+a_-+ z a_-a_+=(1-z)x.$$ 
  On the other hand, for
  $z=1$ we have $\cro{B,B}^-_1=0$.

  For the last statements, we use
  $\Tr^z(\Al\times\Al')=\Tr^z(\Al)\oplus\Tr^z(\Al')$ and
  $\STr^z(\Al\times\Al')=\STr^z(\Al)\oplus\STr^z(\Al')$.
\end{proof}

\section{Decomposition of tensor products}\label{S:DecOfTensorPro} 
We recall the different notations for the simple self-dual projective module:
$$P_{r-1}=V_0=S_{r-1}.$$
From Proposition \ref{P:proj-mod} any projective indecomposable module
of $\cat_{\wb0}\cup\cat_{\wb1}$ is an element of the set
$\{P_i\otimes\C^H_{kr}, i\in \{0,1,\ldots r-1\}, k\in \Z\}$.   
Let us recall their characters 
$$\chi(P_i\otimes \C^H_{kr})=X^{kr}[r]_X(X^{r-i-1}+X^{-r+i+1})\et 
\chi(V_0\otimes \C^H_{kr})=X^{rk}[r]_X$$
where  $i\in \{0,1,\ldots r-2\}$.
Observe now that these characters are linearly independent in
$\Z[X^{\pm 1}]$ and form a basis of an ideal of polynomials which are
divisible by $[r]_X$ (but not of the whole ideal generated by
$[r]_X$).

As a consequence to decompose a projective module $P$ in direct sum of
projective indecomposable ones, it is sufficient to decompose
$\chi(P)$ as $$\chi(P)=\sum_{i=0}^{r-1}\sum_{k_i\in\Z} n_{i,k_i} 
\chi\bp{\C^H_{k_ir}\otimes P_{i}}.
$$
\newcommand{\Sumto}[2]{{\displaystyle{\sum_{
        \tiny{\begin{array}{c}
          #1\\\text{by }2
        \end{array}}}^{#2}\!\!\!}}}
\newcommand{\Sumtom}[2]{{\displaystyle{\sum_{
        \tiny{\begin{array}{c}
          #1\\\text{by }-2
        \end{array}}}^{#2}\!\!\!}}}
\newcommand{\Opto}[2]{{\displaystyle{\bigoplus_{
        \tiny{\begin{array}{c}
          #1\\\text{by }2
        \end{array}}}^{#2}\!\!\!}}}
\newcommand{\Optom}[2]{{\displaystyle{\bigoplus_{
        \tiny{\begin{array}{c}
          #1\\\text{by }-2
        \end{array}}}^{#2}\!\!\!}}}
In the following, we write 
$$\Sumto{k=m}n \et \Opto{k=m}n$$ 
for the sums where $k$ is $k\le n$ and varies in the set  $ m+2\N$.  Similarly, we write 
 $$\Sumtom{k=n}m \et \Optom{k=n}m$$ 
for the sums where $k\ge m$ and varies in the set $ n-2\N$.
\begin{lemma}[Decomposition of tensor products $V_0\otimes S_i$] 
  Let $0\leq i\le r-1$.   
 Then $$V_0\otimes S_i=\Opto{k=r-1-i}{r-1} P_{k}.$$ 
\end{lemma}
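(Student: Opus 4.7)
The plan is to reduce the claim to a character computation. Since $V_0$ is projective and $\Proj$ is an ideal in $\cat$, the tensor product $V_0\otimes S_i$ is projective. By Proposition \ref{P:proj-mod} any projective indecomposable weight module in $\cat_{\wb0}\cup\cat_{\wb1}$ is of the form $\C^H_{kr}\otimes P_\ell$, and as noted right before the statement, the characters of these modules form a linearly independent family in $\Z[\C]$. Hence a projective module in $\cat_{\wb0}\cup\cat_{\wb1}$ is determined up to isomorphism by its character, and it suffices to check that both sides of the claimed decomposition have the same character.

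On the left-hand side, multiplicativity of the character gives
\begin{equation*}
\chi(V_0\otimes S_i)=\chi(V_0)\,\chi(S_i)=\qN{r}_X\,\qN{i+1}_X,
\end{equation*}
where $\qN{i+1}_X=X^i+X^{i-2}+\cdots+X^{-i}$. On the right-hand side, for $k\in\{r-1-i,r+1-i,\ldots,r-1\}$ (step $2$) one uses the formula $\chi(P_k)=\qN{r}_X(X^{r-k-1}+X^{-r+k+1})$ recalled in the paper when $k\le r-2$, and the special value $\chi(P_{r-1})=\chi(V_0)=\qN{r}_X$ when $k=r-1$.

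Setting $\ell=r-1-k$, the index $\ell$ runs over $i,i-2,\ldots,\varepsilon$ where $\varepsilon=0$ if $i$ is even and $\varepsilon=1$ if $i$ is odd. Summing the contributions yields
\begin{equation*}
\sum_{k}\chi(P_k)=\qN{r}_X\Bigl(\sum_{\ell\geq\varepsilon,\ \ell\equiv i\ (2)}^{i}(X^\ell+X^{-\ell})\ -\ \delta_{\varepsilon,0}\Bigr),
\end{equation*}
the correction $-\delta_{\varepsilon,0}$ accounting for the fact that when $\ell=0$ the pair $(X^\ell,X^{-\ell})$ should be counted once, not twice, to match $\chi(V_0)=\qN{r}_X$ (rather than $2\qN{r}_X$). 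In either parity of $i$, the bracketed sum collapses to $X^i+X^{i-2}+\cdots+X^{-i}=\qN{i+1}_X$, so the two characters agree.

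The only subtlety is this boundary case $k=r-1$, and the main thing to watch is keeping the two parities of $i$ consistent with the step-$2$ indexing; no deeper obstacle arises.
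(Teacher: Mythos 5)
Your proof is correct and follows essentially the same approach as the paper: reduce to a character computation using the facts that $V_0\otimes S_i$ is projective and that projectives in $\cat_{\wb0}\cup\cat_{\wb1}$ are determined by their characters, then verify $[r]_X[i+1]_X=\sum_k\chi(P_k)$. Your treatment of the $k=r-1$ boundary term via the $\delta_{\varepsilon,0}$ correction is exactly the point the paper handles by splitting into the $i$ odd and $i$ even cases.
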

\begin{proof}
If $i$ is odd it holds :
$$\chi(V_0\otimes S_i)=[r]_X[i+1]_X=[r]_X\Sumtom{j=i}{1} (X^{j}+X^{-j})
=\Sumtom{j=i}{0} \chi(P_{r-1-j}).$$
If $i$ is even it holds :
$$\chi(V_0\otimes S_i)=[r]_X[i+1]_X=[r]_X\bp{1+\Sumtom{j=i}{2} (X^{j}+X^{-j})}
=\Sumtom{j=i}{0} \chi(P_{r-1-j}).$$
\end{proof}

\begin{prop}[The decomposition of the tensor products $P_i\otimes S_j$]\label{prop:decoPiSj}
  Let  $0\leq i\le r-2$ and $0\le j\le r-1$. It holds: 
  $$P_i\otimes S_j=\left(\Opto{k=|i-j|}{\min(i+j,r-1)} P_{k}\right)\oplus 
  \left(\Opto{k=2r-2-i-j}{r-1} P_{k}\right)\oplus
  \left(\Opto{k=r+i-j}{r-1} P_{k}\otimes (\C^{H}_{r}\oplus
    \C^{H}_{-r})\right)$$
  where   the sums are meant to be empty if the lower bound is bigger than the
  upper bound. 
\end{prop}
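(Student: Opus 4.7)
The plan is to reduce the claim to a character identity and verify it by induction on $j$.

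First, $P_i\otimes S_j$ is projective because $\Proj$ is an ideal of $\cat$, closed under tensor product with any module. By Proposition~\ref{P:proj-mod}, the indecomposable projectives in $\cat_{\wb 0}\cup\cat_{\wb 1}$ are the modules $\C^H_{Kr}\otimes P_k$ for $k\in\{0,\dots,r-1\}$ and $K\in\Z$, and their characters are linearly independent in $\Z[X^{\pm 1}]$. Hence the multiplicities of the indecomposable summands of any projective module are determined by its character, and it suffices to verify that the two sides of the claimed isomorphism have the same character.

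I would induct on $j$. The base case $j=0$ is immediate. For $j=1$, a direct character computation using $\chi(P_i)=[r]_X(X^{r-1-i}+X^{-(r-1-i)})$ and $\chi(S_1)=X+X^{-1}$ decomposes $P_i\otimes S_1$ and matches the formula: generically one obtains $P_{i-1}\oplus P_{i+1}$, but the boundary cases produce $P_1\oplus V_0\otimes(\C^H_r\oplus\C^H_{-r})$ when $i=0$ and $P_{r-3}\oplus 2V_0$ when $i=r-2$. For the inductive step with $2\leq j\leq r-1$, the Clebsch--Gordan identity $S_1\otimes S_{j-1}\simeq S_j\oplus S_{j-2}$ (valid because $j-1\leq r-2$) yields
$$P_i\otimes S_j\oplus P_i\otimes S_{j-2}\simeq(P_i\otimes S_{j-1})\otimes S_1.$$
The inductive hypothesis describes $P_i\otimes S_{j-1}$, and tensoring each of its projective summands with $S_1$ is known both from the $j=1$ base case (applied to each $P_k$ factor with $k\leq r-2$) and from $V_0\otimes S_1\simeq P_{r-2}$ (a special case of the lemma on $V_0\otimes S_i$), with $\C^H_{\pm r}$-twists commuting with tensor. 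Subtracting $P_i\otimes S_{j-2}$ (also known by induction) and using linear independence of characters then isolates the decomposition of $P_i\otimes S_j$.

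The main obstacle is the combinatorial bookkeeping in the inductive step: the boundary effects when tensoring $P_0$, $P_{r-2}$ and $V_0$ with $S_1$ produce the ``wrap-around'' terms $V_0\otimes(\C^H_r\oplus\C^H_{-r})$ and $2V_0$ that populate $A_3$ and $A_2$. One must verify that these boundary contributions combine correctly over the $j$-step recursion so that the net decomposition matches exactly the three families $A_1=\{|i-j|,\dots,\min(i+j,r-1)\}$, $A_2=\{2r-2-i-j,\dots,r-1\}$ and $A_3=\{r+i-j,\dots,r-1\}$, with correct step-by-2 indexing, parities, upper bounds $\min(i+j,r-1)$, and non-emptiness conditions (e.g.\ $A_2\neq\emptyset$ iff $i+j\geq r-1$, $A_3\neq\emptyset$ iff $j>i$). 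Keeping track of the overlap $A_1\cap A_2\ni r-1$, which accounts for the doubled $V_0$ summand produced when $i+j\geq r-1$ and $i+j\equiv r-1\pmod 2$, is the subtlest piece of the verification.
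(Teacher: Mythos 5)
Your reduction to a character identity is the same as the paper's: since $P_i\otimes S_j$ is projective and characters of indecomposable projectives are linearly independent, it suffices to match characters. From there the paper computes $\chi(P_i\otimes S_j)=[r]_X\bigl([r-i+j]_X-[r-i-j-2]_X\bigr)$ and expands this directly, splitting into the four cases $i\gtrless j$, $i+j\gtrless r-1$. Your inductive route via $S_1\otimes S_{j-1}\simeq S_j\oplus S_{j-2}$ is a genuinely different way to carry out the same verification, and it would work; it reuses the $j=1$ computation and $V_0\otimes S_1\simeq P_{r-2}$ at each step, at the cost of more intricate bookkeeping.

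The difficulty is that, as written, the proposal stops exactly where the proof has to begin. You name ``the main obstacle'' --- checking that the boundary contributions from $P_0$, $P_{r-2}$ and $V_0$ propagate through the recursion so that the three ranges with their parities, $\min$'s and emptiness conditions come out right --- and then leave it unverified. That check \emph{is} the content of the proposition (the reduction to characters and the Clebsch--Gordan identity are both routine), and the paper's four-case expansion is doing precisely that work, just non-recursively. Until the inductive step is actually carried out, what you have is a plan, not a proof.

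One factual slip also needs correcting: the overlap between the first two families is not confined to $k=r-1$. Since $2r-2-i-j\geq|i-j|$ whenever $i,j\leq r-1$, the whole range $\{2r-2-i-j,\,2r-i-j,\,\ldots,\,r-1\}$ already sits inside the first range as soon as $i+j\geq r-1$, so many $P_k$ can appear with multiplicity $2$, not just $V_0$. For instance, with $r=5$, $i=j=3$ the statement gives $P_3\otimes S_3\simeq P_0\oplus 2P_2\oplus 2P_4$, doubling both $P_2$ and $P_4=V_0$. A recursion that only tracks a doubled $V_0$ will undercount, so the bookkeeping is somewhat subtler than your closing paragraph suggests.
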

\begin{proof}
It holds : 
$$\chi(P_i\otimes S_j)=[r]_X(X^{r-i-1}+X^{-r+i+1})[j+1]_X=[r]_X([r-i+j]_X-[r-i-j-2]_X).$$ 
Recall that $[-n]=-[n]$.  We denote the parity of $r-1-i+j$ and $r-3-i-j$  by $p\in \{0,1\}$ (note they coincide).
If $i>j$ and $i+j\leq r-2$ we have :
$$\chi(P_i\otimes S_j)=[r]_X\left(\Sumto{l=r-i-j-1}{r-i+j-1}(X^l+X^{-l})\right)=\Sumto{l=r-i-j-1}{r-i+j-1}\chi(P_{r-1-l})=\Sumto{k=i-j}{i+j} \chi(P_k).$$
If $i\geq j$ and $i+j\geq r-1$ and $p=1$  
we have :
\begin{align*}
\chi(P_i\otimes S_j)=[r]_X\left(\Sumto{l=p}{r-i+j-1}(X^l+X^{-l})+\Sumto{l=p}{i+j-r+1}(X^l+X^{-l})\right)=\\
=\Sumto{l=p}{r-i+j-1}\chi(P_{r-1-l})+\Sumto{l=p}{i+j-r+1}\chi(P_{r-1-l})=\Sumto{k=i-j}{r-1-p}\chi(P_k)+\Sumto{k=2r-2-i-j}{r-1-p}\chi(P_k).
\end{align*}
Note a similar calculation gives the result above in the case $p=0$ (just pay attention to the fact that if $p=0$ the terms $X^0+X^{-0}$ should be replaced by $X^0$).

Let us now suppose that $j >i$ and $i+j\leq r-2$ and let $q\in \{0,1\}$ be the parity of $j-i-1$. Then, if $q=1$ (as above, a similar calculation proves the same final formula if $q=0$) it holds : 
\begin{align*}
  \chi(P_i\otimes S_j)=[r]_X\left(\Sumto{l=r-j+i+1}{r-i+j-1}(X^l+X^{-l})+
    \Sumto{l=r-i-j-1}{r-j+i-1}(X^l+X^{-l})\right)=\\
  [r]_X\left((X^r+X^{-r})\Sumto{h=q}{j-i-1}(X^h+X^{-h})+
    \Sumto{l=r-i-j-1}{r-j+i-1}(X^l+X^{-l})\right)=\\
  =(X^r+X^{-r})\Sumto{k=r-j+i}{r-1-q}\chi(P_k)+
  \Sumto{k=j-i}{i+j}\chi(P_k).
\end{align*}
Finally suppose that $j >i$ and $i+j\geq r-1$  and as before let $p\in \{0,1\}$ be the parity of $r-i+j-1$.   If $p=1$ (and as above if $p=0$ or $q=0$ modify the calculation by replacing the terms $X^0+X^{-0}$ by $X^0$, still getting the same final result):
\begin{align*}
  \chi(P_i\otimes S_j)=[r]_X\left(\Sumto{l=p}{r-i+j-1}(X^{l}+X^{-l})+
    \Sumto{l=p}{i+j+1-r}(X^l+X^{-l})\right)=\\
  [r]_X\left(\Sumto{h=r-j+i+1}{r+j-i-1}X^h+\Sumto{h=r-j+i+1}{r+j-i-1}X^{-h}+
    \Sumto{h=p}{r-j+i-1}(X^h+X^{-h})+\Sumto{l=p}{i+j+1-r}(X^l+X^{-l})\right)=\\
  =[r]_X\left((X^{r}+X^{-r})\Sumto{s=q}{j-i-1}(X^s+X^{-s})+
    \Sumto{h=p}{r-j+i-1}(X^h+X^{-h})+\Sumto{l=p}{i+j+1-r}(X^l+X^{-l})\right)=\\
  =(X^r+X^{-r})\Sumto{k=r-j+i}{r-1-q}\chi(P_k)+\Sumto{k=j-i}{r-1-p}\chi(P_k)+
  \Sumto{k=2r-2-i-j}{r-1-p}\chi(P_k).
\end{align*}

To summarize the above computations, let $p,q\in \{0,1\}$ be the parities  of  $r+j-i-1$ and of  $j-i-1$, respectively. It holds:
\begin{equation}
P_i\otimes S_j=\begin{cases} 
\Opto{k=i-j}{i+j} P_k &
{\rm if}\ \left\{
\begin{array}{l}
  i\geq j\\i+j\leq r-2
\end{array}\right.
\\
\Opto{k=i-j}{r-1-p} P_k\Opto{k=2r-2-i-j}{r-1-p}P_k & 
{\rm if}\ \left\{
\begin{array}{l}
  i\geq j\\i+j\geq r-1
\end{array}\right.
\\
\Opto{k=j-i}{i+j}P_k \Opto{k=r+i-j}{r-1-q} (\C_{r}^H\oplus \C_{-r}^H)\otimes P_k& 
{\rm if}\ \left\{
\begin{array}{l}
  i<j\\i+j\leq r-2
\end{array}\right.
\\
\Opto{k=j-i}{r-1-p}P_k \Opto{k=2r-2-i-j}{r-1-p}\hspace*{-2ex}P_k 
\Opto{k=r-j+i}{r-1-q} (\C_{r}^H\oplus \C_{-r}^H)\otimes P_k& 
{\rm if}\ \left\{
\begin{array}{l}
  i<j\\i+j\geq r-1
\end{array}\right. .
\end{cases}
\end{equation} 
This is equivalent to the statement of the proposition.
\end{proof}

Let us now remark that for each $i\in \{0,1,\ldots r-2\}$,
$$\chi(P_i)=2\chi(S_i)+(\chi(\C^H_r)+\chi(\C^H_{-r}))\chi(S_{r-2-i}).$$

This, together with Proposition \ref{prop:decoPiSj} and the fact that
the modules $P_i$ are projective allow us to compute the full tensor
decomposition of $P_i\otimes P_j$ :
\begin{cor}[The tensor decomposition of $P_i\otimes P_j$]
For each $i,j\in \{0,1,\ldots r-2\}$ we have
$$P_i\otimes P_j=\left((\C^H_{r}\oplus \C^H_{-r})\otimes \left( P_i\otimes S_{r-2-j}\right) \right)\bigoplus 2\left( P_i\otimes S_j \right)$$
and so
$$P_i\otimes P_j=\left(2\Opto{k=|i-j|}{\min(i+j,r-1)} P_{k}\right)\oplus \left(2\Opto{k=2r-2-i-j}{r-1} P_{k}\right)\oplus\left(2\Opto{k=r+i-j}{r-1} P_{k}\otimes (\C^{H}_{r}\oplus \C^{H}_{-r})\right)\oplus$$
$$ \oplus\left(\Opto{k=|i+j-r+2|}{\min(i+r-j-2,r-1)}  
P_{k}\otimes(\C^{H}_{r}\oplus \C^{H}_{-r})
\right)\oplus\left(\Opto{k=r-i+j}{r-1} P_{k}\otimes (\C^{H}_{r}\oplus
\C^{H}_{-r})\right)\oplus$$
$$\oplus\left(\Opto{k=i+j+2}{r-1}
P_{k}\otimes (\C^{H}_{2r}\oplus 2\oplus \C^{H}_{-2r})\right).$$  
Similarly $V_0\otimes
P_j=\left((\C^H_{r}\oplus \C^H_{-r})\otimes \left( V_0\otimes
    S_{r-2-j}\right) \right)\bigoplus 2\left( V_0\otimes S_j
\right), \ {\rm and\ so}$
$$ V_0\otimes
P_j= \left(\Opto{k=j+1}{r-1} (\C^H_{r}\oplus \C^H_{-r})\otimes P_{k}\right )\bigoplus \Opto{k=r-1-j}{r-1} 2P_{k}.
$$ 
\end{cor}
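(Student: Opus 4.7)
The plan is to reduce everything to characters, using two facts already established: projective modules are closed under tensor product with arbitrary modules (so both $P_i \otimes P_j$ and $V_0 \otimes P_j$ are projective), and projective modules in $\cat_{\wb 0} \cup \cat_{\wb 1}$ are determined up to isomorphism by their characters (this was the observation used immediately after Proposition \ref{P:proj-mod} to prove that $V_\alpha \otimes V_{-\alpha} \simeq V_0 \otimes V_0$, and it follows from the linear independence noted just above the corollary together with the classification in Proposition \ref{P:proj-mod}).

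Starting from the character identity stated just before the corollary,
\[
\chi(P_j) = 2\chi(S_j) + (\chi(\C^H_r) + \chi(\C^H_{-r}))\chi(S_{r-2-j}),
\]
I would multiply both sides by $\chi(P_i)$ (resp.\ by $\chi(V_0)$). This yields
\[
\chi(P_i \otimes P_j) = 2\chi(P_i \otimes S_j) + \chi\bigl((\C^H_r \oplus \C^H_{-r}) \otimes P_i \otimes S_{r-2-j}\bigr),
\]
and similarly for $V_0 \otimes P_j$. Since both sides are characters of projective modules in $\cat_{\wb 0} \cup \cat_{\wb 1}$, the equality of characters promotes to an isomorphism of modules, giving the first (compact) formula displayed in the corollary for $P_i \otimes P_j$, and its $V_0$ analogue.

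To obtain the explicit expansion in terms of indecomposables, I would then substitute the decomposition of $P_i \otimes S_j$ from Proposition \ref{prop:decoPiSj} (applied once with $j$ and once with $r-2-j$), and the decomposition of $V_0 \otimes S_j$ from the preceding lemma. For the $P_i \otimes P_j$ expansion one collects the three summands coming from $2(P_i \otimes S_j)$ exactly as in Proposition \ref{prop:decoPiSj}, then tensors the three summands coming from $P_i \otimes S_{r-2-j}$ by $\C^H_r \oplus \C^H_{-r}$; the $\C^H_{\pm 2r}$ terms in the final line arise precisely from the third summand of Proposition \ref{prop:decoPiSj} (which already carries a $\C^H_{\pm r}$ factor) being further tensored by $\C^H_{\pm r}$. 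For $V_0 \otimes P_j$ the analogous bookkeeping is even simpler because the lemma gives $V_0 \otimes S_j$ as a single chain of $P_k$'s.

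There is no conceptual obstacle here; the only difficulty is combinatorial bookkeeping of indexing ranges (lower/upper bounds, parity of the step-by-$2$ summations, and the case distinction $i \lessgtr j$, $i+j \lessgtr r-1$ hidden inside Proposition \ref{prop:decoPiSj}). In particular one must verify that the ranges written in the statement correctly merge the contributions of $P_i \otimes S_j$ and $(\C^H_r \oplus \C^H_{-r}) \otimes P_i \otimes S_{r-2-j}$ after the substitution $j \mapsto r-2-j$. Once the character identity above is in hand, this is a routine but careful verification, and uniqueness of the decomposition into indecomposables (Krull--Schmidt, which holds since each $\End(P_i)$ is local by Proposition \ref{P:proj-mod}) ensures the answer is well defined.
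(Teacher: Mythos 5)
Your proposal matches the paper's approach exactly: the paper obtains this corollary from the character identity $\chi(P_j)=2\chi(S_j)+(\chi(\C^H_r)+\chi(\C^H_{-r}))\chi(S_{r-2-j})$, the fact that projective modules in $\cat_{\wb 0}\cup\cat_{\wb 1}$ are determined by their characters, and the decompositions in Proposition~\ref{prop:decoPiSj} and the preceding lemma. Like the paper, you leave the final index-range bookkeeping implicit, which is consistent with the paper's terse treatment (the corollary is stated without a separate proof block).
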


\begin{prop}\label{P:S@S}
  Let $i,j\in\{0..r-1\}$. If $i+j\le r-1$, then 
  $$S_i\otimes S_j=\Opto{k=|i-j|}{i+j}S_k.$$
If $i+j\ge r$ then 
  $$S_i\otimes S_j=\Opto{k=|i-j|}{2r-4-i-j}S_k\oplus\Opto{k=2r-2-i-j}{r-1}P_k.$$
  In particular, semi-simple and projective modules of $\cat$ form a
  full sub-tensor category.
\end{prop}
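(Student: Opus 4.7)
The approach has two ingredients: a character identity that pinpoints which indecomposables should appear, and an inductive argument on $\min(i,j)$ using previously established tensor-product decompositions.

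\emph{Character identity.} By the quantum Clebsch--Gordan rule,
$$
\chi(S_i)\chi(S_j)=[i+1]_X[j+1]_X=\Sumto{k=|i-j|}{i+j}[k+1]_X.
$$
A direct expansion gives $[r+m]_X+[r-m]_X=[r]_X(X^m+X^{-m})=\chi(P_{r-1-m})$ for $1\le m\le r-1$; equivalently, $[k+1]_X=\chi(P_{2r-2-k})-\chi(S_{2r-2-k})$ for $r\le k\le 2r-2$. Splitting the Clebsch--Gordan sum at $k=r-1$ and substituting this identity for each term with $k\ge r$ produces exactly the character of the right-hand side of the proposition (using $\chi(S_{r-1})=\chi(P_{r-1})$ to absorb the $k=r-1$ term into the projective sum when parities allow).

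\emph{Induction on $\min(i,j)$.} Arrange $i\le j$. The base case $i=0$ is $S_0\otimes S_j=S_j$. For $i=1$ and $j\le r-2$, the decomposition $S_1\otimes S_j=S_{j-1}\oplus S_{j+1}$ follows from the standard Clebsch--Gordan construction of highest-weight vectors, both summands being simple since $j+1\le r-1$. For $j=r-1$ one has $S_i\otimes S_{r-1}=V_0\otimes S_i=\Opto{k=r-1-i}{r-1}P_k$ from the earlier lemma on $V_0\otimes S_i$, which matches the proposition by inspection.

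For the inductive step with $i\ge 2$ and $j\le r-2$, the relation $S_1\otimes S_{i-1}=S_{i-2}\oplus S_i$ (valid since $i\le r-1$) together with $S_1\otimes S_j=S_{j-1}\oplus S_{j+1}$ (valid since $j+1\le r-1$) gives
$$
(S_{i-2}\otimes S_j)\oplus(S_i\otimes S_j)=S_{i-1}\otimes(S_1\otimes S_j)=(S_{i-1}\otimes S_{j-1})\oplus(S_{i-1}\otimes S_{j+1}).
$$
Each term on the right has $\min(i-1,j\pm 1)=i-1<i$, and $S_{i-2}\otimes S_j$ has $\min(i-2,j)=i-2<i$, so all three are known by induction. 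Since $\cat$ is Krull--Schmidt (each $S_k$ has endomorphism ring $\C$ and each $P_k$ has $\C[x_k]/(x_k^2)$, both local), cancellation determines $S_i\otimes S_j$ as a direct sum of simples $S_k$ and indecomposable projectives $P_k$. The character identity then forces this decomposition to match the formula in the proposition.

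\emph{Main obstacle and conclusion.} The principal technical burden is the combinatorial bookkeeping in the cancellation step: one must verify that after subtracting $S_{i-2}\otimes S_j$ from the right-hand side, the surviving indecomposables are precisely those listed in the proposition. This reduces to a combinatorial index manipulation guaranteed by the character identity. The concluding assertion that semi-simple and projective modules form a full sub-tensor category then follows immediately, since every summand in the decomposition is either a simple $S_k$ or an indecomposable projective $P_k$, and both families are already known to be closed under tensor with the other by the earlier results of the paper.
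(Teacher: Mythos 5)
Your proof is correct and takes essentially the same approach as the paper: both use induction powered by the Clebsch--Gordan relation $S_1\otimes S_j=S_{j-1}\oplus S_{j+1}$, the earlier decomposition of $V_0\otimes S_i$, and the Krull--Schmidt property. Your character-identity step and the choice to induct on $\min(i,j)$ are valid organizational refinements of the bookkeeping that the paper's terse argument leaves implicit.
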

\begin{proof}
  The proof is by induction on $i$ using that for $j\in\{1\cdots
  r-2\}$, $S_1\otimes S_j=S_{j-1}\oplus S_{j+1}$ and that $S_{r-1}$ is
  projective.  The induction is given by using
  $$S_1\otimes S_i\otimes
  S_j=(S_{i+1}\otimes S_j)\oplus (S_{i-1}\otimes S_j).$$ To see the
  last point, remark that the tensor product of two simple modules is
  a direct sum of a semi-simple module direct sum a projective module.
  Thus, the full subcategory formed by semi-simple and projective
  modules is stable by tensor product.
\end{proof}
\section{Multiplicity modules} 
Here we summarize some known facts about multiplicity modules.  The
one dimensional $\Hom$ spaces $\Hom_\cat(\C,V_{\alpha}\otimes
V_{-\alpha})$ and $\Hom_\cat(\C,V_{\alpha}\otimes V_{\beta} \otimes
V_{\gamma})$ for $\alpha+\beta+\gamma\in\{-(r-1),-(r-3),\ldots, r-1\}$
can be equipped with nice basis.  By a nice basis of
$\Hom_\cat(\C,V_{\alpha_1}\otimes\cdots \otimes V_{\alpha_n})$ we mean
a set of basis of these spaces spaces such that
\begin{enumerate}
\item it depends analytically of the parameters $\alpha_i\in\Cp$ (here
  we identify $V_\alpha$ with $\C^r=\bigoplus_i\C.v_i$ as in Equation
  \eqref{E:BasisV}) and
\item the set of basis is globally permuted by the pivotal isomorphism
  $$ 
  \Hom_\cat(\C,V_{\alpha_1}\otimes\cdots \otimes
  V_{\alpha_n}) \stackrel \sim\longrightarrow
  \Hom_\cat(\C,V_{\alpha_2}\otimes\cdots \otimes V_{\alpha_n}\otimes
  V_{\alpha_1}).$$
\end{enumerate}
The existence of a nice basis has been checked in \cite{GP1} for $r$
odd and in \cite{CM} for any $r$ but using a different normalizations.
These basis are used in \cite{GPT2,GP1,CM,CGP1,BCGP} to produce
numerical invariant of $\Cp$-colored framed trivalent graphs embedded
in $S^3$, and numerical 6j-symbols.
 
The basis of $\Hom_\cat(\C,V_{\alpha}\otimes V_{-\alpha})$ induce
isomorphisms $w_\alpha:V_{\alpha}\to V_{-\alpha}^*$ forming what is 
called a basic data (see \cite{GPT2}).  Using these isomorphisms and the
modified trace one gets a duality
$$\Hom_\cat(\C,V_{\alpha}\otimes V_{\beta} \otimes V_{\gamma}) \otimes 
\Hom_\cat(\C,V_{-\gamma}\otimes V_{-\beta} \otimes V_{-\alpha})\to\C$$
for which the basis are dual to each other.

The version $U$ of quantum $\slt$ used in \cite{CM} is slightly
different from $\Uq$.  To differentiate these algebras, let us call
$K_U,E_U,F_U\in U$ the generators, then there is a morphism of Hopf
algebras $\Uq\to U$ given by sending
$$K,E,F \text{ to respectively }K_U^2,K_UE_U,F_UK_U^{-1}.$$
through this morphism, the module $V^a$ of \cite{CM} can be
identified with the module $V_\alpha$ where $\alpha=2a-r+1$.  Then the
nice basis are given in \cite{CM} by computing some Clebsch-Gordan
coefficients.
\\
Different nice basis were computed in \cite{GP1}.  They were computed
recursively using the morphisms $X:V_\alpha\otimes V_\beta\to
V_{\alpha+1}\otimes V_{\beta+1}$ given by
$$X:v_i\otimes v_j\mapsto q^{\beta+i-j-1}\qn{\alpha-i}v_i\otimes v_{j+1}
+q^{-1}\qn{\beta-j}v_{i+1}\otimes v_{j}.$$ 
More than analytic in the parameters $\alpha_i$, they are given by
Laurent polynomials in $q^{\alpha_i}$.  But the work of \cite{GP1}
only consider odd values of $r$.

\section{Odd roots of unity}
In this section we briefly discuss the quantum group of
Subsection 2.2 when $r\in2\N+3$ is odd and $q=\e^{\frac{2\pi\sqrt{-1}}r}$ is a $r^{th}$-root
of unity.  The reason why this case is not
treated with the other are historic, technical, and due to the belief
than topological applications won't differ from the case
$q=\e^{\frac{\pi\sqrt{-1}}r}$.

Here the simple modules are 
\begin{enumerate}
\item the dimension $r$ typical modules $\{V_\alpha:\alpha\in\Cp\}$
  where now $\Cp=(\C\setminus\frac12\Z)\cup\frac r2\Z$,
\item the dimension $1$ invertible modules $\{\C_{k\frac r2}^H:k\in\Z\}$, and
\item the simple modules of dimension less than $r$: $\{S_i\otimes\C_{k\frac
    r2}^H:0<i<r,k\in\Z\}$, where the highest weight of $S_i$ is $i$.
\end{enumerate}

One difference between the odd/even case is that Ohtsuki in \cite{Oh} does not treat the case discussed in this subsection.  In any case, when $r\in2\N+3$ the category is still pivotal with the same pivot given by
$K^{r-1}$.  The fact that the formula \eqref{eq:R} still defines a
braiding on the category $\cat$ is proven in \cite[section 5.8]{GP3}.
The computation of Ohtsuki for the associated twist has never been
completed in this case.  Still in \cite{GP3} we show that a full
subcategory of $\cat$ that contain typical modules and self-dual
modules is ribbon.

If $g\in \C/2\Z\setminus (\frac12\Z)/2\Z$ then $\cat_g$ is semi-simple
and $\cat_g\subset\Proj$.  Typical modules are projective and there
exists a unique trace on $\Proj$ up to a scalar.  Its associated
modified dimension is given by formula \eqref{eq:qd}.

A nice basis for the multiplicity modules $\Hom(\C,V_\alpha\otimes
V_\beta\otimes V_\gamma)$ is missing in the literature, and the
$6j$-symbols have  not been computed in this case (they have been
computed when $q$ is a 2 times odd root of unity in \cite{GP1} and
for any even root of unity in \cite{CM} with a different
normalization).

In \cite{GPT2,GP3,CGP1} the authors construct topological invariants of
dimension $3$ using algebraic data.  
The case treated with most attention is that of quantum $\slt$ when
$q$ is a root of unity of order $2r$ but the case we have discussed in this
section is also considered as an example all together with the quantum
groups associated to the other simple Lie algebras (also at odd root
of unity).

\end{document}